\documentclass[a4paper]{amsart}
\usepackage{amscd}
\usepackage{amsmath}
\usepackage{amssymb}
\usepackage{amsthm}
\usepackage{bbm} 
\usepackage{mathrsfs}
\usepackage{stmaryrd}

\usepackage[T1]{fontenc}

\usepackage{lipsum}

\usepackage{xcolor}

\makeatletter
\renewcommand\subsection{\@startsection{subsection}{2}%
  \z@{-.5\linespacing\@plus-.7\linespacing}{.5\linespacing}%
  {\normalfont\scshape}}
\renewcommand\subsubsection{\@startsection{subsubsection}{3}%
  \z@{.5\linespacing\@plus.7\linespacing}{-.5em}%
  {\normalfont\scshape}}
\makeatother

%\newif\ifpdf
%\ifx\pdfoutput\undefined
%   \pdffalse        % we are not running PDFLaTeX
%\else
%   \pdfoutput=1     % we are running PDFLaTeX
%   \pdftrue
%\fi

%\ifpdf
%   \usepackage[pdftex]{graphicx}
%   \pdfadjustspacing=1
%   \pdfcompresslevel=9
%\else
%   \usepackage{graphicx}
%\fi

\makeatletter
\@namedef{subjclassname@2020}{%
  \textup{2020} Mathematics Subject Classification}
\makeatother

\frenchspacing

\numberwithin{equation}{section} \swapnumbers

\newtheorem{satz}{Satz}[section]

\newtheorem{theorem}[satz]{Theorem}
\newtheorem{proposition}[satz]{Proposition}
\newtheorem{corollary}[satz]{Corollary}
\newtheorem{lemma}[satz]{Lemma}

\newtheorem{remark}[satz]{Remark}

\newtheorem{example}[satz]{Example}

\newcommand{\bbr}{\mathbb{R}}
\newcommand{\bbe}{\mathbb{E}}
\newcommand{\bbn}{\mathbb{N}}
\newcommand{\bbp}{\mathbb{P}}
\newcommand{\bbq}{\mathbb{Q}}

\newcommand{\cald}{\mathcal{D}}

\newcommand{\calf}{\mathcal{F}}

\newcommand{\call}{\mathcal{L}}
\newcommand{\calm}{\mathcal{M}}

\newcommand{\lin}{{\rm lin}}

\newcommand{\diag}{{\rm diag}}

\newcommand{\bbI}{\mathbbm{1}}

\newcommand{\la}{\langle}
\newcommand{\ra}{\rangle}

\begin{document}

\title[Distance between closed sets and the solutions to SPDEs]{Distance between closed sets and the solutions to stochastic partial differential equations}
\author{Toshiyuki Nakayama \and Stefan Tappe}
\address{MUFG, Bank, Ltd., 2-7-1, Marunouchi, Chiyoda-ku, Tokyo 100-8388, Japan}
\email{tnkym376736@gmail.com}
\address{Albert Ludwig University of Freiburg, Department of Mathematical Stochastics, Ernst-Zermelo-Stra\ss{}e 1, D-79104 Freiburg, Germany}
\email{stefan.tappe@math.uni-freiburg.de}
\date{9 October, 2024}
\thanks{We are grateful to Marvin M\"{u}ller for bringing our attention to the AMS review of the paper \cite{Jachimiak-note}. Stefan Tappe gratefully acknowledges financial support from the Deutsche Forschungsgemeinschaft (DFG, German Research Foundation) -- project number 444121509.}
\begin{abstract}
The goal of this paper is to clarify when the solutions to stochastic partial differential equations stay close to a given subset of the state space for starting points which are close as well. This includes results for deterministic partial differential equations. As an example, we will consider the situation where the subset is a finite dimensional submanifold with boundary. We also discuss applications to mathematical finance, namely the modeling of the evolution of interest rate curves.
\end{abstract}
\keywords{Stochastic partial differential equation, deterministic partial differential equation, mild solution, closed subset, distance function, finite dimensional submanifold, interest rate model}
\subjclass[2020]{60H15, 60G17, 34G20, 47H20, 91G80}

\maketitle\thispagestyle{empty}

\section{Introduction}

Consider a semilinear stochastic partial differential equation (SPDE) of the form
\begin{align}\label{SPDE-intro}
\left\{
\begin{array}{rcl}
dX(t) & = & \big( A X(t) + \alpha(X(t)) \big) dt + \sigma(X(t)) dW(t)\medskip
\\ X(0) & = & x
\end{array}
\right.
\end{align}
driven by a trace class Wiener process $W$. Let $K \subset H$ be a closed subset of the state space $H$, which we assume to be a separable Hilbert space. Furthermore, let $A$ be the infinitesimal generator of a $C_0$-semigroup $(S_t)_{t \geq 0}$ on $H$, and let $\alpha$ and $\sigma$ be Lipschitz continuous coefficients.

In several papers stochastic invariance of $K$ has been studied; that is, necessary and sufficient conditions have been derived such that for every starting point $x \in K$ the mild solution $X(\,\cdot\,;x)$ to the SPDE (\ref{SPDE-intro}) stays in the set $K$. This has been done in \cite{Jachimiak} and, based on the support theorem presented in \cite{Nakayama-Support}, in \cite{Nakayama} for the case where $K$ is an arbitrary closed subset. In the particular case where $K$ is a closed convex cone, we refer to the papers \cite{Milian, FTT-positivity, Tappe-cones, Tappe-cones-2}; the latter three articles consider the situation where the SPDE (\ref{SPDE-intro}) is additionally driven by a Poisson random measure. If $K$ is a finite dimensional submanifold of $H$, the invariance problem has been studied in \cite{Filipovic-inv} and \cite{Nakayama} (see also the review article \cite{Tappe-Review}), and in \cite{FTT-manifolds} even in the situation where the SPDE (\ref{SPDE-intro}) has an additional Poisson random measure as driving noise. Here a related problem is the existence of a finite dimensional realization (FDR), which means that for each starting point $x \in H$ a finite dimensional invariant manifold $K \subset H$ with $x \in K$ exists. This problem has mostly been studied for the so-called Heath-Jarrow-Morton-Musiela (HJMM) equation from mathematical finance, and we refer, for example, to \cite{Bj_Sv, Bj_La, Filipovic-Teichmann, Filipovic-Teichmann-royal, Tappe-Wiener, Tappe-affine} and \cite{Tappe-Levy, Platen-Tappe, Tappe-affin-real}, where the latter three articles even deal with SPDEs driven by L\'{e}vy processes. In \cite{Projection} a similar problem has been studied; there the authors find a projection of the forward curve flow onto a finite dimensional submanifold.

The goal of this article is to go beyond stochastic invariance. More precisely, we raise the more general question when for every starting point $x \in H$, which is close to $K$, but might not be contained in $K$, the mild solution $X(\,\cdot\,;x)$ to the SPDE (\ref{SPDE-intro}) stays close to $K$.

The described property that the solutions stay close to $K$ is of interest for many applications; for example in mathematical finance, when modeling the evolution of interest rate curves. Formerly interest rate curves have been nonnegative functions, which can mathematically be treated as a stochastic invariance problem; see, for example \cite{FTT-positivity}. However, nowadays interest rate curves can also take slightly negative values, and here modeling interest rate curve evolutions being close to the cone of nonnegative functions appears to be more natural.

Furthermore, it is known that arbitrage free forward curve evolutions are typically not consistent with parametric families like the Nelson-Siegel family or the Svensson family. However, using our results we can construct arbitrage free evolutions which are close to the corresponding submanifold and derive dynamics on the submanifold which are close to the original arbitrage free dynamics.

Now, let us make the idea from above, that for every starting point $x \in H$, which is close to $K$, the mild solution solution $X(\,\cdot\,;x)$ to the SPDE (\ref{SPDE-intro}) should also be close to $K$, more precise. For this purpose, let us consider the simpler situation of a deterministic partial differential equation (PDE) of the form
\begin{align*}
\left\{
\begin{array}{rcl}
\dot{\xi}(t) & = & A \xi(t) + \alpha(\xi(t))
\\ \xi(0) & = & x
\end{array}
\right.
\end{align*}
on some Banach space $X$. Then the statement that the mild solution $\xi(\,\cdot\,;x)$ for some starting point $x \in X$ is close to $K$ means that for a fixed constant $\epsilon \geq 0$ we have
\begin{align}\label{error-function}
d_K(\xi(t;x)) \leq \Phi(d_K(x),\epsilon,t), \quad (t,x) \in \bbr_+ \times X
\end{align}
with a continuous error function $\Phi : \bbr^2 \times \bbr_+ \to \bbr_+$ such that $\Phi(0,0,t) = 0$ for all $t \in \bbr_+$, where $d_K : X \to \bbr_+$ denotes the distance function of the set $K$. We will show that the $\epsilon$-SNC ($\epsilon$-semigroup Nagumo's condition)
\begin{align}\label{liminf-PDE-pre-intro}
\liminf_{t \downarrow 0} \frac{1}{t} d_K(S_t x + t \alpha(x)) \leq \epsilon, \quad x \in K
\end{align}
is equivalent to (\ref{error-function}) with error function given by
\begin{align}\label{Phi-intro}
\Phi(d,e,t) = e^{\gamma t} d + \varphi_{\gamma}(t) e
\end{align}
for an appropriate constant $\gamma \in \bbr$, where the function $\varphi_{\gamma} : \bbr_+ \to \bbr_+$ is defined as
\begin{align*}
\varphi_{\gamma}(t) := \int_0^t e^{\gamma (t-s)} ds, \quad t \in \bbr_+.
\end{align*}
We refer to Theorems \ref{thm-Jachimiak} and \ref{thm-dist-PDE} for further details.
We call condition (\ref{liminf-PDE-pre-intro}) the $\epsilon$-SNC, as for $\epsilon = 0$ this is the well-known SNC appearing in \cite{Jachimiak-note}, where a similar result has been presented. However, as pointed out in the AMS review (MR1466843), the proof is not complete. In this paper, we amend the result from \cite{Jachimiak-note}; see in particular the aforementioned Theorem \ref{thm-Jachimiak}.

Now, let us come back to SPDEs of the type (\ref{SPDE-intro}). In this situation, we will prove a similar result for the expected distance. For $\epsilon \geq 0$ we call the condition
\begin{align}\label{SSNC-intro}
\liminf_{t \downarrow 0} \frac{1}{t} d_K \big( S_t h + t ( \alpha(h) - \rho(h) + \sigma(h) u ) \big) \leq \epsilon, \quad h \in K \text{ and } u \in U_0
\end{align}
the \emph{$\epsilon$-SSNC} (\emph{$\epsilon$-stochastic semigroup Nagumo's condition}), as for $\epsilon = 0$ this is the well-known SSNC appearing in \cite{Jachimiak} and \cite{Nakayama}. In (\ref{SSNC-intro}) the mapping $\rho : H \to H$ is given by
\begin{align*}
\rho(h) := \frac{1}{2} \sum_{j=1}^{\infty} D \sigma^j(h) \sigma^j(h),
\end{align*}
see Section \ref{sec-PDEs} for further details. Fixing a finite time horizon $T > 0$, we will show that for all $\delta > 0$ and $x_0 \in H$ there are an error function $\Phi_{\delta}^{x_0} : \bbr^2 \times [0,T] \to \bbr_+$ and an open neighborhood $U(x_0,\delta) \subset H$ of $x_0$ such that for every $\epsilon \geq 0$ the $\epsilon$-SSNC (\ref{SSNC-intro}) implies
\begin{align*}
\bbe \big[ d_K(X(t;x))^2 \big]^{1/2} \leq \delta + \Phi_{\delta}^{x_0}(d_K(x),\epsilon,t), \quad (t,x) \in [0,T] \times U(x_0,\delta).
\end{align*}
We refer to Theorem \ref{thm-SPDE} for details, and to Corollaries \ref{cor-d-zero} and \ref{cor-SPDE-inv} for further consequences concerning the $L^2$-norm $\bbe[d_K(X(t;x))^2]^{1/2}$. The main idea for the proof of Theorem \ref{thm-SPDE} is to consider Wong-Zakai approximations of the mild solutions to the SPDE (\ref{SPDE-intro}), and to apply our findings about deterministic PDEs to each sample path of the Wong-Zakai approximations. Here the crucial result is Theorem \ref{thm-piecewise}, where we treat time-inhomogeneous deterministic PDEs with coefficients which are piecewise constant in time.

Later on, we will provide conditions which ensure that the $\epsilon$-SSNC (\ref{SSNC-intro}) is fulfilled in the situation where the set $K$ is a finite dimensional submanifold with boundary. Furthermore, we will present applications to the HJMM (Heath-Jarrow-Morton-Musiela) equation from mathematical finance.

The remainder of this paper is organized as follows. In Section \ref{sec-PDEs} we present our results about deterministic PDEs, and in Section \ref{sec-PDEs-inh} we add some results about time-inhomogeneous PDEs. After these preparations, in Section \ref{sec-SPDEs} we present our results about the distance between closed sets and the solutions to SPDEs. In Section \ref{sec-manifolds} we provide some consequences in the situation where the closed set is a finite dimensional submanifold with boundary. In Section \ref{sec-subspace} we consider the situation where the submanifold is a finite dimensional subspace and derive dynamics on the subspace, which are close to the original dynamics and which can be described by a finite dimensional state process. Afterwards, in Section \ref{sec-HJMM} we present applications to the HJMM (Heath-Jarrow-Morton-Musiela) equation from mathematical finance, and in Section \ref{sec-interest} to another SPDE which is also used for interest rate modeling.

\section{Partial differential equations}\label{sec-PDEs}

In this section we provide results concerning the distance between closed sets and the solutions to deterministic PDEs. Let $X$ be a Banach space, and let $A$ be the infinitesimal generator of a $C_0$-semigroup $(S_t)_{t \geq 0}$ on $X$. We consider a deterministic PDE of the form
\begin{align}\label{PDE}
\left\{
\begin{array}{rcl}
\dot{\xi}(t) & = & A \xi(t) + \alpha(\xi(t))
\\ \xi(0) & = & x,
\end{array}
\right.
\end{align}
where $\alpha : X \to X$ is a Lipschitz continuous mapping; that is, for some constant $L \geq 0$ we have
\begin{align}\label{Lip-alpha}
\| \alpha(x) - \alpha(y) \| \leq L \| x-y \| \quad \text{for all $x,y \in X$.}
\end{align}
This ensures that for each $x \in X$ the PDE (\ref{PDE}) has a unique mild solution; that is, a continuous function $\xi(\,\cdot\,;x) : \bbr_+ \to X$ such that
\begin{align*}
\xi(t;x) = S_t x + \int_0^t S_{t-s} \alpha(\xi(s;x)) ds, \quad t \in \bbr_+.
\end{align*}
Let $K \subset X$ be a closed subset. Recall that the distance function $d_K : X \to \bbr_+$ is defined as
\begin{align*}
d_K(x) := \inf_{y \in K} \| x-y \|, \quad x \in X.
\end{align*}
For $\epsilon \geq 0$ we call the condition
\begin{align}\label{liminf-PDE-pre}
\liminf_{t \downarrow 0} \frac{1}{t} d_K(S_t x + t \alpha(x)) \leq \epsilon, \quad x \in K
\end{align}
the \emph{$\epsilon$-SNC} (\emph{$\epsilon$-semigroup Nagumo's condition}), as for $\epsilon = 0$ this is the well-known SNC appearing in \cite{Jachimiak-note}. The following auxiliary results will be useful.

\begin{lemma}\label{lemma-dist-x-y}
We have $d_K(x+y) \leq d_K(x) + \| y \|$ for all $x,y \in X$.
\end{lemma}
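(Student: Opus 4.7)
The plan is to unwind the definition of $d_K$ as an infimum over $K$ and apply the triangle inequality. Specifically, I would fix an arbitrary $z \in K$ and use the triangle inequality in the Banach space $X$ to obtain
\begin{align*}
\| (x+y) - z \| \leq \| x - z \| + \| y \|.
\end{align*}
Since $d_K(x+y) = \inf_{z \in K} \| (x+y) - z \|$, the left-hand side of the display dominates $d_K(x+y)$, so
\begin{align*}
d_K(x+y) \leq \| x - z \| + \| y \| \quad \text{for every } z \in K.
\end{align*}
Taking the infimum over $z \in K$ on the right-hand side and invoking the definition of $d_K(x)$ then yields the claim.

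There is no real obstacle here; this is essentially the standard fact that the distance function to a nonempty set is $1$-Lipschitz, phrased in an additive form. The only minor point worth checking is that $K$ is nonempty (so that the infimum is finite), which is implicitly assumed throughout this section since $d_K$ is defined as a mapping $X \to \bbr_+$.
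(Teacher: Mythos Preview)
Your proof is correct and is essentially identical to the paper's own argument: both unwind the definition of $d_K$ as an infimum and apply the triangle inequality, with the only cosmetic difference being that the paper carries the infimum through a single chain of equalities and inequalities while you fix $z \in K$ first and take the infimum at the end.
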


\begin{proof}
By a straightforward calculation we have
\begin{align*}
d_K(x+y) &= \inf_{z \in K} \| (x+y) - z \| = \inf_{z \in K} \| (x - z) + y \| \leq \inf_{z \in K} \big( \| x-z \| + \| y \| \big)
\\ &= \inf_{z \in K} \| x-z \| + \| y \| = d_K(x) + \| y \|,
\end{align*}
completing the proof
\end{proof}

\begin{lemma}\label{lemma-liminf}
Let $x \in K \cap \cald(A)$ and $v \in X$ be arbitrary. Then for each $\epsilon \geq 0$ the following statements are equivalent:
\begin{enumerate}
\item[(i)] We have
\begin{align}\label{liminf-semigroup}
\liminf_{t \downarrow 0} \frac{1}{t} d_K(S_t x + tv) \leq \epsilon.
\end{align}

\item[(ii)] We have
\begin{align}\label{liminf-A}
\liminf_{t \downarrow 0} \frac{1}{t} d_K \big( x + t (Ax + v) \big) \leq \epsilon.
\end{align}
\end{enumerate}
\end{lemma}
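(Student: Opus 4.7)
The plan is to exploit the identity $\lim_{t \downarrow 0} \frac{S_t x - x}{t} = Ax$, which holds by definition of the generator because $x \in \cald(A)$. Setting $r(t) := S_t x - x - t A x$, this gives $\|r(t)\|/t \to 0$ as $t \downarrow 0$.

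Next, I would rewrite the two arguments of $d_K$ so that they differ only by the vector $r(t)$. Indeed,
\begin{align*}
S_t x + t v = \bigl( x + t(Ax + v) \bigr) + r(t),
\end{align*}
and conversely
\begin{align*}
x + t(Ax + v) = (S_t x + t v) - r(t).
\end{align*}
Applying Lemma \ref{lemma-dist-x-y} to both identities yields the two-sided bound
\begin{align*}
\bigl| d_K(S_t x + t v) - d_K\bigl( x + t(Ax + v) \bigr) \bigr| \leq \| r(t) \|.
\end{align*}

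Dividing by $t > 0$ and passing to $\liminf_{t \downarrow 0}$, the term $\|r(t)\|/t$ vanishes, so
\begin{align*}
\liminf_{t \downarrow 0} \frac{1}{t} d_K(S_t x + t v) = \liminf_{t \downarrow 0} \frac{1}{t} d_K\bigl( x + t(Ax + v) \bigr),
\end{align*}
and in particular (i) and (ii) are equivalent for every $\epsilon \geq 0$. There is no real obstacle here; the only subtlety is that one needs $x \in \cald(A)$ to ensure the remainder $r(t)$ is $o(t)$, which is precisely why this hypothesis is imposed in the statement.
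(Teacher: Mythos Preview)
Your proof is correct and follows essentially the same approach as the paper: both arguments use Lemma~\ref{lemma-dist-x-y} together with the fact that $(S_t x - x)/t \to Ax$ for $x \in \cald(A)$. You package the two implications as a single two-sided bound and thereby obtain the slightly stronger conclusion that the two liminfs are actually equal, but the underlying idea is identical.
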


\begin{proof}
(i) $\Rightarrow$ (ii): Using Lemma \ref{lemma-dist-x-y}, for all $t > 0$ we have
\begin{align*}
\frac{1}{t} d_K \big( x + t (Ax + v) \big) &= \frac{1}{t} d_K \bigg( S_t x + tv + t \bigg( Ax - \frac{S_t x - x}{t} \bigg) \bigg)
\\ &\leq \frac{1}{t} d_K ( S_t x + tv ) + \bigg\| Ax - \frac{S_t x - x}{t} \bigg\|,
\end{align*}
proving (\ref{liminf-A}).

\noindent(ii) $\Rightarrow$ (i): Using Lemma \ref{lemma-dist-x-y}, for all $t > 0$ we have
\begin{align*}
\frac{1}{t} d_K(S_t x + tv) &= \frac{1}{t} d_K \bigg( x + t(Ax + v) - t \bigg( Ax - \frac{S_t x - x}{t} \bigg) \bigg)
\\ &\leq \frac{1}{t} d_K \big( x + t (Ax + v) \big) + \bigg\| Ax - \frac{S_t x - x}{t} \bigg\|,
\end{align*}
proving (\ref{liminf-semigroup}).
\end{proof}

\begin{lemma}\label{lemma-PDE-SNC}
Suppose that $K \subset \cald(A)$. Then for all $\epsilon \geq 0$ the $\epsilon$-SNC (\ref{liminf-PDE-pre}) is satisfied if and only if
\begin{align}\label{SNC-D-A}
\liminf_{t \downarrow 0} \frac{1}{t} d_K \big( x + t ( Ax + \alpha(x) \big) \leq \epsilon, \quad x \in K.
\end{align}
\end{lemma}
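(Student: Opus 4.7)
The plan is to deduce this equivalence as an immediate consequence of Lemma \ref{lemma-liminf}, applied pointwise at each $x \in K$. The hypothesis $K \subset \cald(A)$ is tailor-made for this: it guarantees that every $x \in K$ satisfies the assumption $x \in K \cap \cald(A)$ of Lemma \ref{lemma-liminf}, and $\alpha(x)$ is a perfectly admissible choice for the arbitrary vector $v \in X$ appearing there.

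More concretely, I would fix $\epsilon \geq 0$ and proceed as follows. For the direction ($\epsilon$-SNC $\Rightarrow$ (\ref{SNC-D-A})), I would pick an arbitrary $x \in K$, observe that $x \in \cald(A)$ by the standing assumption $K \subset \cald(A)$, and then invoke the implication (i) $\Rightarrow$ (ii) of Lemma \ref{lemma-liminf} with $v := \alpha(x) \in X$: the hypothesis (\ref{liminf-semigroup}) becomes precisely the $\epsilon$-SNC (\ref{liminf-PDE-pre}) evaluated at $x$, and the conclusion (\ref{liminf-A}) becomes (\ref{SNC-D-A}) evaluated at $x$. The reverse direction is handled in the same way by invoking (ii) $\Rightarrow$ (i) of Lemma \ref{lemma-liminf}, again with the same choice $v = \alpha(x)$.

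There is essentially no obstacle here; the entire substance of the argument has already been absorbed into Lemma \ref{lemma-liminf}, whose proof relied on the triangle-type inequality of Lemma \ref{lemma-dist-x-y} together with the defining property $S_t x - x = \int_0^t S_s Ax\, ds$ of elements in $\cald(A)$, so that $\frac{1}{t}(S_t x - x) \to Ax$ as $t \downarrow 0$. The only thing to check is that the roles of $x$ and $v$ match, which they do since $\alpha : X \to X$ so that $\alpha(x) \in X$ for every $x \in K$. Consequently the proof reduces to two lines citing Lemma \ref{lemma-liminf} with $v = \alpha(x)$ for each $x \in K$.
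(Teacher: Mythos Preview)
Your proposal is correct and matches the paper's approach exactly: the paper's proof is the single sentence ``This is an immediate consequence of Lemma \ref{lemma-liminf},'' which is precisely the pointwise application with $v = \alpha(x)$ that you describe.
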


\begin{proof}
This is an immediate consequence of Lemma \ref{lemma-liminf}.
\end{proof}

For each $\gamma \in \bbr$ we define the function $\varphi_{\gamma} : \bbr_+ \to \bbr_+$ as
\begin{align}\label{varphi-gamma}
\varphi_{\gamma}(t) := \int_0^t e^{\gamma (t-s)} ds, \quad t \in \bbr_+.
\end{align}
Then, for $\gamma \neq 0$ we have
\begin{align*}
\varphi_{\gamma}(t) = \frac{e^{\gamma t} - 1}{\gamma}, \quad t \in \bbr_+,
\end{align*}
and in case $\gamma = 0$ we have
\begin{align*}
\varphi_0(t) = t, \quad t \in \bbr_+.
\end{align*}

\begin{remark}\label{rem-varphi}
For each $\gamma \in \bbr$ the mapping $t \mapsto \varphi_{\gamma}(t)$ is strictly increasing and smooth with $\varphi_{\gamma}(0) = 0$ and $\varphi_{\gamma}'(0) = 1$.
\end{remark}

\begin{proposition}\label{prop-1}
Let $\epsilon \geq 0$ and $\gamma \geq 0$ be such that for some $\delta > 0$ we have
\begin{align}\label{est-semiflow-pre-K}
d_K(\xi(t;x)) \leq \varphi_{\gamma}(t) \epsilon, \quad (t,x) \in [0,\delta] \times K.
\end{align}
Then we have the $\epsilon$-SNC (\ref{liminf-PDE-pre}).
\end{proposition}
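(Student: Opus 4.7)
The plan is to exploit the closeness of the mild solution $\xi(t;x)$ to the ``Euler step'' $S_t x + t\alpha(x)$ as $t \downarrow 0$, and combine this with the hypothesis that $d_K(\xi(t;x))$ is small.

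First I would apply Lemma \ref{lemma-dist-x-y} to the decomposition
\begin{align*}
S_t x + t \alpha(x) = \xi(t;x) + \bigl( S_t x + t \alpha(x) - \xi(t;x) \bigr)
\end{align*}
to obtain, for $x \in K$ and $t \in [0,\delta]$,
\begin{align*}
\frac{1}{t} d_K \bigl( S_t x + t \alpha(x) \bigr) \leq \frac{1}{t} d_K(\xi(t;x)) + \frac{1}{t} \bigl\| S_t x + t\alpha(x) - \xi(t;x) \bigr\|.
\end{align*}
By assumption (\ref{est-semiflow-pre-K}), the first term is bounded by $\varphi_\gamma(t) \epsilon / t$, which converges to $\varphi_\gamma'(0)\epsilon = \epsilon$ as $t \downarrow 0$ by Remark \ref{rem-varphi}.

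The second step is to show that the error term goes to zero. Using the mild solution formula, write
\begin{align*}
S_t x + t\alpha(x) - \xi(t;x) = \int_0^t \bigl( \alpha(x) - S_{t-s}\alpha(\xi(s;x)) \bigr) ds,
\end{align*}
so the error is bounded by $\sup_{s \in [0,t]} \| S_{t-s}\alpha(\xi(s;x)) - \alpha(x) \|$ after dividing by $t$. A triangle-inequality split gives
\begin{align*}
\| S_{t-s}\alpha(\xi(s;x)) - \alpha(x) \| \leq \| S_{t-s} \| \cdot L \| \xi(s;x) - x \| + \| S_{t-s} \alpha(x) - \alpha(x) \|,
\end{align*}
where I use the Lipschitz bound (\ref{Lip-alpha}) and strong continuity of $(S_t)_{t\geq 0}$. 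Since $(S_t)$ is locally bounded in operator norm on $[0,\delta]$, $\xi(\,\cdot\,;x)$ is continuous with $\xi(0;x) = x$, and $r \mapsto S_r \alpha(x)$ is continuous at $r = 0$, the supremum over $s \in [0,t]$ tends to $0$ as $t \downarrow 0$.

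Putting these together yields
\begin{align*}
\limsup_{t \downarrow 0} \frac{1}{t} d_K \bigl( S_t x + t\alpha(x) \bigr) \leq \epsilon,
\end{align*}
which is stronger than the required $\liminf$ bound (\ref{liminf-PDE-pre}). There is no real obstacle here; the argument is a quantitative comparison between the true flow and the linearized one-step approximation, relying only on the $C_0$-semigroup property, Lipschitz continuity of $\alpha$, and the identity $\varphi_\gamma'(0) = 1$.
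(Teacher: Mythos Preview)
Your proof is correct and follows essentially the same approach as the paper: apply Lemma~\ref{lemma-dist-x-y} to compare $S_t x + t\alpha(x)$ with $\xi(t;x)$, bound the distance term by the hypothesis together with $\varphi_\gamma(t)/t \to 1$, and show the remainder $\bigl\|\alpha(x) - \tfrac{1}{t}\int_0^t S_{t-s}\alpha(\xi(s;x))\,ds\bigr\|$ tends to zero. The only difference is that you spell out the vanishing of the error term via the Lipschitz estimate and strong continuity, whereas the paper simply invokes continuity of $\alpha$; your observation that one actually obtains a $\limsup$ bound is correct.
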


\begin{proof}
We can proceed as in the proof on page 183 in \cite{Jachimiak-note}. Let $x \in K$ be arbitrary. Using Lemma \ref{lemma-dist-x-y}, for all $t \in (0,\delta]$ we have
\begin{align*}
\frac{1}{t} d_K (S_t x + t \alpha(x)) &\leq \frac{1}{t} d_K (\xi(t;x)) + \frac{1}{t} \| S_t x + t \alpha(x) - \xi(t;x) \|
\\ &\leq \epsilon \frac{\varphi_{\gamma}(t)}{t} + \bigg\| \alpha(x) - \frac{1}{t} \int_0^t S_{t-s} \alpha(\xi(s;x)) ds \bigg\|.
\end{align*}
Since $\alpha$ is continuous, we obtain
\begin{align*}
\lim_{t \downarrow 0} \bigg\| \alpha(x) - \frac{1}{t} \int_0^t S_{t-s} \alpha(\xi(s;x)) ds \bigg\| = 0.
\end{align*}
Thus, taking into account Remark \ref{rem-varphi}, we arrive at the $\epsilon$-SNC (\ref{liminf-PDE-pre}).
\end{proof}

We continue with an auxiliary result.

\begin{lemma}\label{lemma-BB}
Let $(X,\rho)$ be a complete metric space, and let $(T_t)_{t \geq 0}$ be family of mappings $T_t : X \to X$ such that the following conditions are fulfilled:
\begin{itemize}
\item $T_{t+s} = T_t T_s$ for all $t,s \geq 0$.

\item There exist $\beta \in \bbr$ and $M > 0$ such that
\begin{align*}
\rho(T_t u,T_t v) \leq M e^{\beta t} \rho(u,v) \quad \text{for all $t \geq 0$ and $u,v \in X$.}
\end{align*}
\item The mapping $\bbr_+ \to X$, $t \mapsto T_t u$ is continuous for each $u \in X$.
\end{itemize}
Let $K \subset X$ be a closed subset, and let $\epsilon \geq 0$ be a constant such that
\begin{align*}
\liminf_{t \downarrow 0} \frac{1}{t} d_K(T_t x) \leq \epsilon \quad \text{for all $x \in K$.}
\end{align*}
Then we have
\begin{align*}
d_K(T_t x) \leq M \big( e^{\beta t} d_K(x) + \epsilon \varphi_{\beta}(t) \big) \quad \text{for all $t \geq 0$ and $x \in X$.}
\end{align*}
\end{lemma}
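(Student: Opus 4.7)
The plan is to reduce the general statement first to the case $M = 1$ by introducing an equivalent metric, then further to the case $x \in K$ by a triangle-type argument, and finally to prove the resulting reduced statement by a barrier/contradiction argument.

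First I would introduce the equivalent metric $\tilde{\rho}(u,v) := \sup_{s \geq 0} e^{-\beta s} \rho(T_s u, T_s v)$. The Lipschitz hypothesis gives $\tilde{\rho} \leq M\rho$, and the value at $s = 0$ yields $\tilde{\rho} \geq \rho$; hence $(X,\tilde{\rho})$ is a complete metric space equivalent to $(X,\rho)$, and the strong continuity of the orbits is preserved. A direct computation using the semigroup property shows $\tilde{\rho}(T_t u, T_t v) \leq e^{\beta t}\tilde{\rho}(u,v)$, so the new Lipschitz constant is $\tilde{M} = 1$. Since the associated distance functions satisfy $d_K \leq \tilde{d}_K \leq M d_K$, the liminf hypothesis becomes $\liminf_{t \downarrow 0} \tilde{d}_K(T_t x)/t \leq M\epsilon$, and the conclusion of the lemma in the new metric (with $\tilde{M} = 1$ and $\tilde{\epsilon} = M\epsilon$) translates back to $d_K(T_t x) \leq M(e^{\beta t} d_K(x) + \epsilon \varphi_\beta(t))$. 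Hence I may assume $M = 1$ throughout.

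For $M = 1$, I next reduce to starting points in $K$: given $x \in X$ and $\delta > 0$, I pick $y \in K$ with $\rho(x,y) \leq d_K(x) + \delta$, combine the triangle-type estimate $d_K(T_t x) \leq \rho(T_t x, T_t y) + d_K(T_t y)$ with the Lipschitz bound to obtain $d_K(T_t x) \leq e^{\beta t}(d_K(x) + \delta) + d_K(T_t y)$, and let $\delta \downarrow 0$. It remains to show $d_K(T_t y) \leq \epsilon\varphi_\beta(t)$ for each $y \in K$. For this, I set $g(t) := d_K(T_t y)$, fix $T > 0$ and $\eta > 0$, and introduce the barrier $\psi(t) := (\epsilon + \eta)\varphi_\beta(t) + \eta e^{\beta t}$, which satisfies $\psi(0) = \eta > 0 = g(0)$ and the crucial identity $\psi(t+h) = e^{\beta h}\psi(t) + (\epsilon + \eta)\varphi_\beta(h)$, inherited from the law $\varphi_\beta(t+h) = e^{\beta h}\varphi_\beta(t) + \varphi_\beta(h)$ for the function (\ref{varphi-gamma}). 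Assuming for contradiction that $g$ exceeds $\psi$ somewhere in $[0,T]$, I let $t_0 := \sup\{t \in [0,T] : g \leq \psi \text{ on } [0,t]\}$; by continuity $g(t_0) = \psi(t_0)$ and there exists a sequence $r_n \downarrow 0$ with $g(t_0 + r_n) > \psi(t_0 + r_n)$. For each $n$, I choose $z_n \in K$ with $\rho(T_{t_0} y, z_n) \leq g(t_0) + \delta_n$ for a diagonal $\delta_n = r_n^2$ and apply $g(t_0 + r_n) \leq e^{\beta r_n}(g(t_0) + \delta_n) + d_K(T_{r_n} z_n)$ together with the identity for $\psi$ to obtain $d_K(T_{r_n} z_n) > (\epsilon + \eta)\varphi_\beta(r_n) - e^{\beta r_n}\delta_n$. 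Dividing by $r_n$ and letting $n \to \infty$ forces $\liminf_n d_K(T_{r_n} z_n)/r_n \geq \epsilon + \eta$, which is to be contradicted by the pointwise liminf hypothesis on $K$. Letting $\eta \downarrow 0$ then gives the target bound for each $T > 0$, hence for all $t \geq 0$.

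The main obstacle is the tension in this final step between the \emph{pointwise} liminf hypothesis --- which controls $d_K(T_h z)/h$ only for each fixed $z \in K$ --- and the \emph{diagonal} sequence $(r_n, z_n)$ produced by the contradiction, in which $z_n$ genuinely varies. Bridging this gap will require a selection argument, for instance Ekeland's variational principle applied to the continuous function $z \mapsto \rho(T_{t_0} y, z)$ on the complete metric space $K$, to promote the approximate minimizers $z_n$ to a sharpened witness for which the infinitesimal condition can be invoked with the needed uniformity; the joint continuity of $(h,u) \mapsto T_h u$ that follows from the standing hypotheses then allows the passage from the diagonal bound to the pointwise one. Once this diagonal-to-pointwise step is handled, the rest of the argument is routine barrier bookkeeping via the semigroup identity for $\psi$.
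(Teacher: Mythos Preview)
The paper does not prove this lemma; it simply cites Jachimiak's note and, for the case $M=1$, the Br\'ezis--Browder ordering principle \cite[Thm.~2]{BrezisBrowder}. So there is no detailed argument in the paper to compare yours against.

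Your two reductions are sound. The renorming $\tilde\rho(u,v):=\sup_{s\ge 0}e^{-\beta s}\rho(T_su,T_sv)$ does give an equivalent complete metric with $\tilde M=1$, and the translated constants ($\tilde d_K\le M d_K$, $\tilde\epsilon=M\epsilon$) feed back to exactly the stated conclusion. The triangle step reducing to $x\in K$ is likewise correct.

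The gap you flag is real, and your proposed fix is not sufficient as stated. In the barrier argument you need, at the crossing time $t_0$, a \emph{fixed} $z\in K$ with $\rho(T_{t_0}y,z)\le g(t_0)+\delta$ \emph{and} a time sequence $h_n\downarrow 0$ along which $d_K(T_{h_n}z)/h_n\le\epsilon+o(1)$; but the admissible $h_n$ depend on $z$ via the pointwise liminf hypothesis, while the accuracy $\delta$ you need depends on the size of $h_n$ --- a genuine circularity. Invoking Ekeland on $z\mapsto\rho(T_{t_0}y,z)$ only sharpens the minimizing property of $z$; it does not decouple $\delta$ from $h_n$, so the contradiction does not close. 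Moreover, even if you produced a single $h_n$ with $g(t_0+h_n)\le\psi(t_0+h_n)$, this would not contradict the definition of $t_0$ as a supremum; you would need $g\le\psi$ on an entire right neighbourhood of $t_0$.

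The standard way around this --- and what the paper's citation of \cite{BrezisBrowder} points to --- is to abandon the crossing-time viewpoint and work instead on the partially ordered set of pairs $(t,z)\in[0,T]\times K$ satisfying $\rho(T_ty,z)\le\psi(t)$, ordered so that $(t_1,z_1)\preceq(t_2,z_2)$ encodes the barrier relation. Increasing sequences have upper bounds by completeness of $K$ and continuity of the orbits; the pointwise liminf hypothesis, applied at the fixed second coordinate $z$ of a would-be maximal pair with $t<T$, manufactures a strictly larger pair. This breaks the circularity because the point is chosen \emph{before} the time increment. If you want a self-contained proof rather than a citation, this Br\'ezis--Browder / Zorn-type argument is the missing ingredient, not Ekeland.
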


\begin{proof}
See the Lemma on page 184 in \cite{Jachimiak-note}. For the particular case $M=1$ see also \cite[Thm. 2]{BrezisBrowder}.
\end{proof}

We recall that there exist constants $M \geq 1$ and $\beta \in \bbr$ such that the semigroup $(S_t)_{t \geq 0}$ satisfies the growth estimate
\begin{align}\label{growth-semigroup-general}
\| S_t \| \leq M e^{\beta t} \quad \text{for all $t \geq 0$.}
\end{align}
For the following results, we also recall that $L \geq 0$ denotes the Lipschitz constant from (\ref{Lip-alpha}).

\begin{proposition}\label{prop-2}
Let $\epsilon \geq 0$ be such that we have the $\epsilon$-SNC (\ref{liminf-PDE-pre}). Then we have
\begin{align}\label{est-semiflow-pre-M}
d_K(\xi(t;x)) \leq M \big( e^{(\beta+ML) t} d_K(x) + \varphi_{\beta + ML}(t) \epsilon \big), \quad (t,x) \in \bbr_+ \times X.
\end{align}
\end{proposition}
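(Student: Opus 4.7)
The plan is to apply Lemma \ref{lemma-BB} to the semiflow $T_t x := \xi(t;x)$ on the complete metric space $(X, \|\cdot - \cdot\|)$, choosing the constants so that the conclusion matches (\ref{est-semiflow-pre-M}). Let me outline the verification of the three structural hypotheses of Lemma \ref{lemma-BB}, followed by the translation of the $\epsilon$-SNC (\ref{liminf-PDE-pre}) into the abstract $\liminf$ assumption.

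First, the semigroup identity $T_{t+s} = T_t T_s$ follows from uniqueness of mild solutions to (\ref{PDE}). Second, continuity of $t \mapsto T_t x$ is built into the definition of a mild solution. Third, for the Lipschitz-type bound, I would take two initial values $x,y \in X$, use the mild-solution representation together with the growth estimate (\ref{growth-semigroup-general}) and (\ref{Lip-alpha}), and obtain
\begin{align*}
e^{-\beta t} \| \xi(t;x) - \xi(t;y) \| \leq M \| x - y \| + \int_0^t ML \, e^{-\beta s} \| \xi(s;x) - \xi(s;y) \| \, ds.
\end{align*}
Gronwall's inequality then yields $\| \xi(t;x) - \xi(t;y) \| \leq M e^{(\beta + ML) t} \| x - y \|$, so the hypothesis of Lemma \ref{lemma-BB} holds with constants $M$ and $\beta + ML$.

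The key step is to verify that the $\epsilon$-SNC (\ref{liminf-PDE-pre}) for the semigroup and drift implies the corresponding $\liminf$ condition for the semiflow, namely $\liminf_{t \downarrow 0} t^{-1} d_K(\xi(t;x)) \leq \epsilon$ for every $x \in K$. The natural estimate, modeled on the argument in the proof of Proposition \ref{prop-1}, is to apply Lemma \ref{lemma-dist-x-y} with the splitting $\xi(t;x) = (S_t x + t \alpha(x)) + \int_0^t (S_{t-s} \alpha(\xi(s;x)) - \alpha(x)) \, ds$ to get
\begin{align*}
\frac{1}{t} d_K(\xi(t;x)) \leq \frac{1}{t} d_K(S_t x + t \alpha(x)) + \bigg\| \frac{1}{t} \int_0^t \big( S_{t-s} \alpha(\xi(s;x)) - \alpha(x) \big) \, ds \bigg\|.
\end{align*}
By continuity of the semigroup at zero, of $\alpha$, and of $\xi(\,\cdot\,;x)$, the second term tends to zero as $t \downarrow 0$, so taking $\liminf$ and using the $\epsilon$-SNC yields the required inequality.

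With these ingredients in place, Lemma \ref{lemma-BB} applied with $M$ and $\beta + ML$ immediately delivers the bound
\begin{align*}
d_K(\xi(t;x)) \leq M \big( e^{(\beta + ML) t} d_K(x) + \varphi_{\beta + ML}(t) \epsilon \big),
\end{align*}
which is exactly (\ref{est-semiflow-pre-M}). I expect no genuine obstacle; the only subtlety is the bookkeeping in the Gronwall step to obtain the correct exponent $\beta + ML$, which must match the one appearing inside $\varphi_{\beta + ML}$ in the conclusion.
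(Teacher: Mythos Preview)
Your proposal is correct and follows essentially the same route as the paper's proof: both verify the Lipschitz bound $\|\xi(t;x)-\xi(t;y)\|\leq M e^{(\beta+ML)t}\|x-y\|$ (the paper cites \cite{Jachimiak-note}, you sketch the Gronwall argument), both use the splitting via Lemma~\ref{lemma-dist-x-y} to transfer the $\epsilon$-SNC from the linearized expression $S_t x + t\alpha(x)$ to the semiflow $\xi(t;x)$, and both conclude by invoking Lemma~\ref{lemma-BB} with constants $M$ and $\beta+ML$.
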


\begin{proof}
As shown on pages 184/185 in \cite{Jachimiak-note}, for all $t \geq 0$ and $x,y \in X$ we have
\begin{align*}
\| \xi(t;x) - \xi(t;y) \| \leq M e^{(\beta + ML)t} \| x-y \|.
\end{align*}
Furthermore, by Lemma \ref{lemma-dist-x-y} for all $x \in K$ we have
\begin{align*}
\frac{1}{t} d_K(\xi(t;x)) \leq \frac{1}{t} d_K(S_t x + t \alpha(x)) + \bigg\| \alpha(x) - \frac{1}{t} \int_0^t S_{t-s} \alpha(\xi(s;x)) ds \bigg\|.
\end{align*}
Therefore, by the $\epsilon$-SNC (\ref{liminf-PDE-pre}) we obtain
\begin{align*}
\liminf_{t \downarrow 0} \frac{1}{t} d_K(\xi(t;x)) \leq \epsilon \quad \text{for all $x \in K$,}
\end{align*}
and hence, by Lemma \ref{lemma-BB} the result follows.
\end{proof}

Now, we are ready to prove the following result, which amends \cite[Thm. 1]{Jachimiak-note}.

\begin{theorem}\label{thm-Jachimiak}
For each $\epsilon \geq 0$ we have the implications (i) $\Rightarrow$ (ii) $\Rightarrow$ (iii) $\Rightarrow$ (iv), where:
\begin{enumerate}
\item[(i)] We have the $\epsilon$-SNC (\ref{liminf-PDE-pre}).

\item[(ii)] We have (\ref{est-semiflow-pre-M}).

\item[(iii)] There is $\delta > 0$ such that
\begin{align*}
d_K(\xi(t;x)) \leq M \varphi_{\beta + ML}(t) \epsilon, \quad (t,x) \in [0,\delta] \times K.
\end{align*}

\item[(iv)] We have the $\epsilon$-SNC (\ref{liminf-PDE-pre}) with $\epsilon$ replaced by $M \epsilon$.
\end{enumerate}
\end{theorem}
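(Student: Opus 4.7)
The plan is to obtain the three implications by chaining together the two propositions already established in this section, with one trivial specialization in between.

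For (i) $\Rightarrow$ (ii), there is nothing to do: this is exactly the content of Proposition \ref{prop-2}, which shows that the $\epsilon$-SNC yields the exponential growth estimate \eqref{est-semiflow-pre-M} on all of $\bbr_+ \times X$ with constants $M$, $\beta + ML$.

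For (ii) $\Rightarrow$ (iii), I would simply restrict the estimate \eqref{est-semiflow-pre-M} to starting points $x \in K$. Since then $d_K(x) = 0$, the first summand on the right-hand side vanishes and we are left with
\begin{align*}
d_K(\xi(t;x)) \leq M \varphi_{\beta+ML}(t) \epsilon, \quad (t,x) \in \bbr_+ \times K,
\end{align*}
so (iii) holds for any $\delta > 0$ (e.g.\ $\delta = 1$).

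For (iii) $\Rightarrow$ (iv), I would invoke Proposition \ref{prop-1} with $\gamma := \beta + ML \geq 0$ (assuming, as one may after enlarging $\beta$ if needed, that $\beta \geq 0$) and with the constant $\epsilon$ there replaced by $M\epsilon$. The hypothesis \eqref{est-semiflow-pre-K} of Proposition \ref{prop-1} is precisely the bound in (iii), so the proposition delivers the $(M\epsilon)$-SNC, which is (iv).

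There is no real obstacle here; the theorem is a packaging result that records the circular dependence between the infinitesimal condition and the global estimate up to a loss of the semigroup constant $M$. The only minor point to watch is the sign of $\gamma = \beta + ML$ required for Proposition \ref{prop-1}; if $\beta$ is negative and $|\beta| > ML$, one replaces $\beta$ by any larger constant (still a valid growth bound in \eqref{growth-semigroup-general}) so that $\gamma \geq 0$, which only weakens the conclusion and is harmless for the implication chain.
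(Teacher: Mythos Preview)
Your proof is correct and matches the paper's own proof exactly: (i) $\Rightarrow$ (ii) by Proposition \ref{prop-2}, (ii) $\Rightarrow$ (iii) trivially, and (iii) $\Rightarrow$ (iv) by Proposition \ref{prop-1}. Your remark about the sign of $\gamma = \beta + ML$ is a legitimate minor technical point that the paper glosses over; your fix (enlarging $\beta$, or equivalently noting that $\varphi_{\gamma}(t)$ is monotone in $\gamma$) is correct and harmless.
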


\begin{proof}
(i) $\Rightarrow$ (ii): This implication is a consequence of Proposition \ref{prop-2}.

\noindent(ii) $\Rightarrow$ (iii): This implication is obvious.

\noindent(iii) $\Rightarrow$ (iv): This implication is a consequence of Proposition \ref{prop-1}.
\end{proof}

If $\alpha$ is bounded, then we obtain the following additional statement.

\begin{proposition}\label{prop-3}
Let $\epsilon \geq 0$ be such that we have the $\epsilon$-SNC (\ref{liminf-PDE-pre}). If there is a constant $B \in \bbr_+$ such that $\| \alpha(x) \| \leq B$ for all $x \in X$, then we have
\begin{align*}
d_K(\xi(t;x)) \leq M \big( e^{\beta t} d_K(x) + \varphi_{\beta}(t) (\epsilon + 2B) \big), \quad (t,x) \in \bbr_+ \times X.
\end{align*}
\end{proposition}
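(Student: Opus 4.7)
The plan is to apply Lemma \ref{lemma-BB} not to the nonlinear semiflow $\xi(t;\cdot)$ (which would reintroduce the rate $\beta + ML$ as in Proposition \ref{prop-2}), but directly to the linear $C_0$-semigroup $(S_t)_{t \geq 0}$, which only contributes the factor $Me^{\beta t}$ in the growth estimate (\ref{growth-semigroup-general}). The boundedness hypothesis on $\alpha$ is what makes this substitution possible: it lets us trade the Lipschitz contribution $ML$ in the exponent for an additive correction proportional to $B$.

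First, I would verify a semigroup analogue of the SNC with the enlarged slack $\epsilon + B$. For any $x \in K$, Lemma \ref{lemma-dist-x-y} combined with $\| \alpha(x) \| \leq B$ gives
\begin{align*}
\frac{1}{t} d_K(S_t x) \leq \frac{1}{t} d_K(S_t x + t \alpha(x)) + \| \alpha(x) \| \leq \frac{1}{t} d_K(S_t x + t \alpha(x)) + B,
\end{align*}
so the $\epsilon$-SNC (\ref{liminf-PDE-pre}) immediately yields
\begin{align*}
\liminf_{t \downarrow 0} \frac{1}{t} d_K(S_t x) \leq \epsilon + B \quad \text{for all } x \in K.
\end{align*}
Since $(S_t)_{t \geq 0}$ is a linear $C_0$-semigroup satisfying (\ref{growth-semigroup-general}), its flow on the complete metric space $X$ fulfills the hypotheses of Lemma \ref{lemma-BB} with parameters $M$ and $\beta$. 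Applying that lemma then produces
\begin{align*}
d_K(S_t x) \leq M \big( e^{\beta t} d_K(x) + \varphi_{\beta}(t) (\epsilon + B) \big), \quad (t,x) \in \bbr_+ \times X.
\end{align*}

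Second, I would pass from $S_t x$ back to the mild solution $\xi(t;x)$ by a single application of Lemma \ref{lemma-dist-x-y}, using the variation-of-constants formula. For every $x \in X$ and $t \geq 0$,
\begin{align*}
\bigg\| \xi(t;x) - S_t x \bigg\| = \bigg\| \int_0^t S_{t-s} \alpha(\xi(s;x)) ds \bigg\| \leq \int_0^t M e^{\beta (t-s)} B \, ds = M B \varphi_{\beta}(t),
\end{align*}
so that
\begin{align*}
d_K(\xi(t;x)) \leq d_K(S_t x) + \| \xi(t;x) - S_t x \| \leq M \big( e^{\beta t} d_K(x) + \varphi_{\beta}(t) (\epsilon + B) \big) + M B \varphi_{\beta}(t).
\end{align*}
Collecting the two $\varphi_{\beta}(t)$ terms delivers the announced inequality with constant $\epsilon + 2B$.

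There is no real obstacle here: the argument is essentially a two-line bootstrap from the previously established material. The only conceptual point worth highlighting is the choice, in Lemma \ref{lemma-BB}, of the \emph{linear} semigroup as the family $(T_t)_{t \geq 0}$; this is precisely what keeps the exponential rate at $\beta$ rather than $\beta + ML$, and the bounded-drift hypothesis $\| \alpha \|_\infty \leq B$ is exactly what one needs both to upgrade the SNC from $\epsilon$ to $\epsilon + B$ and to control $\xi(t;x) - S_t x$ uniformly in $x$.
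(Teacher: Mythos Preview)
Your argument is correct and follows essentially the same route as the paper: derive the $(\epsilon+B)$-SNC for the bare semigroup, obtain the distance estimate for $S_t x$, and then pass to $\xi(t;x)$ via the variation-of-constants bound $\|\xi(t;x)-S_t x\|\leq MB\varphi_\beta(t)$. The only cosmetic difference is that the paper obtains the estimate for $d_K(S_t x)$ by quoting Theorem~\ref{thm-Jachimiak} with $\alpha=0$ (and hence $L=0$), whereas you invoke Lemma~\ref{lemma-BB} directly; these amount to the same thing.
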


\begin{proof}
By Lemma \ref{lemma-dist-x-y}, for all $(t,x) \in \bbr_+ \times X$ we have
\begin{align*}
d_K(S_t x) \leq d_K(S_t x + t \alpha(x)) + t \| \alpha(x) \|.
\end{align*}
Therefore, by (\ref{liminf-PDE-pre}) we have
\begin{align*}
\liminf_{t \downarrow 0} \frac{1}{t} d_K(S_t x) \leq \epsilon + B, \quad x \in K.
\end{align*}
By Theorem \ref{thm-Jachimiak} with $\alpha = 0$ we obtain
\begin{align*}
d_K(S_t x) \leq M \big( e^{\beta t} d_K(x) + \varphi_{\beta}(t) (\epsilon + B) \big), \quad (t,x) \in \bbr_+ \times X.
\end{align*}
Hence, by Lemma \ref{lemma-dist-x-y} for all $(t,x) \in \bbr_+ \times X$ we have
\begin{align*}
d_K(\xi(t;x)) &\leq d_K(S_t x) + \| \xi(t;x) - S_t x \|
\\ &\leq M \big( e^{\beta t} d_K(x) + \varphi_{\beta}(t) (\epsilon + B) \big) + \bigg\| \int_0^t S_{t-s} \alpha(\xi(s;x)) ds \bigg\|.
\end{align*}
Consequently, noting that for all $(t,x) \in \bbr_+ \times X$ we have
\begin{align*}
\bigg\| \int_0^t S_{t-s} \alpha(\xi(s;x)) ds \bigg\| &\leq \int_0^t \| S_{t-s} \alpha(\xi(s;x)) \| ds \leq M \int_0^t e^{\beta(t-s)} \| \alpha(\xi(s;x)) \| ds
\\ &\leq MB \int_0^t e^{\beta(t-s)} ds = MB \varphi_{\beta}(t)
\end{align*}
completes the proof.
\end{proof}

Now, consider the particular case $M=1$, where $M \geq 1$ denotes the constant appearing in the growth estimate (\ref{growth-semigroup-general}) of the semigroup $(S_t)_{t \geq 0}$. 

\begin{theorem}\label{thm-dist-PDE}
We assume that the semigroup $(S_t)_{t \geq 0}$ is pseudo-contractive; that is, there exists a constant $\beta \in \bbr$ such that
\begin{align}\label{growth-semigroup}
\| S_t \| \leq e^{\beta t} \quad \text{for all $t \geq 0$.}
\end{align}
Then for each $\epsilon \geq 0$ the following statements are equivalent:
\begin{enumerate}
\item[(i)] We have the $\epsilon$-SNC (\ref{liminf-PDE-pre}).

\item[(ii)] We have
\begin{align*}
d_K(\xi(t;x)) \leq e^{(\beta + L) t} d_K(x) + \varphi_{\beta + L}(t) \epsilon, \quad (t,x) \in \bbr_+ \times X.
\end{align*}
\item[(iii)] There exists $\delta > 0$ such that
\begin{align*}
d_K(\xi(t;x)) \leq \varphi_{\beta + L}(t) \epsilon, \quad (t,x) \in [0,\delta] \times K.
\end{align*}
\end{enumerate}
Furthermore, the following statements are true:
\begin{enumerate}
\item If $K \subset \cald(A)$, then the equivalent statements (i), (ii) and (iii) are satisfied if and only if we have (\ref{SNC-D-A}).

\item If the equivalent statements (i), (ii) and (iii) are fulfilled, and if there is a constant $B \in \bbr_+$ such that $\| \alpha(x) \| \leq B$ for all $x \in X$, then we have
\begin{align*}
d_K(\xi(t;x)) \leq e^{\beta t} d_K(x) + \varphi_{\beta}(t) (\epsilon + 2B), \quad (t,x) \in \bbr_+ \times X.
\end{align*}
\end{enumerate}
\end{theorem}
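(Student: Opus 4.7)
The plan is to read Theorem \ref{thm-dist-PDE} as the specialization $M=1$ of the chain of implications already recorded in Theorem \ref{thm-Jachimiak}, together with two direct corollaries. Concretely, I would start by observing that under the pseudo-contractivity hypothesis (\ref{growth-semigroup}) the general growth estimate (\ref{growth-semigroup-general}) holds with $M=1$, so the constants in Theorem \ref{thm-Jachimiak} simplify: $M e^{(\beta + ML)t} = e^{(\beta+L)t}$ and $M \varphi_{\beta+ML}(t) = \varphi_{\beta+L}(t)$. Thus the statements (ii) and (iii) in Theorem \ref{thm-dist-PDE} are exactly the $M=1$ versions of statements (ii) and (iii) in Theorem \ref{thm-Jachimiak}.

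Next I would close the equivalence loop. Theorem \ref{thm-Jachimiak} already gives (i) $\Rightarrow$ (ii) $\Rightarrow$ (iii) $\Rightarrow$ (iv), where (iv) is the $\epsilon$-SNC with $\epsilon$ replaced by $M\epsilon$. Since $M=1$, condition (iv) coincides with condition (i), so all three statements (i), (ii), (iii) are equivalent. This is the main substantive content, and it requires no new computation beyond noting $M = 1$.

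For the two further claims, the arguments are short invocations of earlier results. Claim~1 (on $K \subset \mathcal{D}(A)$) follows immediately from Lemma \ref{lemma-PDE-SNC}, which states precisely that under this hypothesis the $\epsilon$-SNC (\ref{liminf-PDE-pre}) is equivalent to (\ref{SNC-D-A}); combining this with the equivalence of (i), (ii) and (iii) gives the desired statement. Claim~2 (with $\alpha$ bounded by $B$) follows from Proposition \ref{prop-3} by setting $M=1$, which collapses the constants $M e^{\beta t}$ and $M\varphi_{\beta}(t)$ to $e^{\beta t}$ and $\varphi_{\beta}(t)$ respectively and yields precisely the asserted estimate.

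I do not foresee any genuine obstacle: the proof is essentially bookkeeping, with the only care needed being to verify that the constants really do match in the $M=1$ specialization. The main intellectual work has already been done in Propositions \ref{prop-1}, \ref{prop-2}, \ref{prop-3} and Theorem \ref{thm-Jachimiak}, so the proof of Theorem \ref{thm-dist-PDE} should be presentable in a few lines, amounting to citing the relevant earlier results with the observation that $M=1$.
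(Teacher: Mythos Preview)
Your proposal is correct and matches the paper's own proof essentially verbatim: the paper states that the theorem is a consequence of Theorem \ref{thm-Jachimiak}, Lemma \ref{lemma-PDE-SNC} and Proposition \ref{prop-3}, which is precisely the specialization $M=1$ you describe, with the loop closing because (iv) coincides with (i) when $M\epsilon = \epsilon$.
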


\begin{proof}
This is a consequence of Theorem \ref{thm-Jachimiak}, Lemma \ref{lemma-PDE-SNC} and Proposition \ref{prop-3}.
\end{proof}

We obtain the following consequence for the solutions of the abstract Cauchy problem.

\begin{corollary}\label{cor-dist-PDE}
We assume that the semigroup $(S_t)_{t \geq 0}$ is pseudo-contractive; that is, there exists a constant $\beta \in \bbr$ such that we have (\ref{growth-semigroup}). Then for each $\epsilon \geq 0$ the following statements are equivalent:
\begin{enumerate}
\item[(i)] We have
\begin{align*}
\liminf_{t \downarrow 0} \frac{1}{t} d_K(S_t x) \leq \epsilon, \quad x \in K.
\end{align*}

\item[(ii)] We have
\begin{align*}
d_K(S_t x) \leq e^{\beta t} d_K(x) + \varphi_{\beta}(t) \epsilon, \quad (t,x) \in \bbr_+ \times X.
\end{align*}

\item[(iii)] There exists $\delta > 0$ such that
\begin{align*}
d_K(S_t x) \leq \varphi_{\beta}(t) \epsilon, \quad (t,x) \in [0,\delta] \times K.
\end{align*}
\end{enumerate}
If $K \subset \cald(A)$, then the equivalent statements (i), (ii) and (iii) are satisfied if and only if we have
\begin{align*}
\liminf_{t \downarrow 0} \frac{1}{t} d_K(x + tAx) \leq \epsilon, \quad x \in K.
\end{align*}
\end{corollary}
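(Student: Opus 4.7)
The plan is to derive Corollary \ref{cor-dist-PDE} as a direct specialization of Theorem \ref{thm-dist-PDE} to the case where the nonlinear coefficient vanishes. Concretely, I would apply Theorem \ref{thm-dist-PDE} with the choice $\alpha \equiv 0$, which is a Lipschitz continuous map on $X$ with Lipschitz constant $L = 0$. Under this choice, the mild solution of the PDE (\ref{PDE}) reduces to $\xi(t;x) = S_t x$ for all $(t,x) \in \bbr_+ \times X$, so every occurrence of $\xi(t;x)$ in Theorem \ref{thm-dist-PDE} can be replaced by $S_t x$.

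With $L = 0$, the exponent $\beta + L$ appearing in Theorem \ref{thm-dist-PDE} collapses to $\beta$, so the estimates in statements (ii) and (iii) take the form $d_K(S_t x) \leq e^{\beta t} d_K(x) + \varphi_\beta(t) \epsilon$ on $\bbr_+ \times X$ and $d_K(S_t x) \leq \varphi_\beta(t) \epsilon$ on $[0,\delta] \times K$, matching exactly the three statements of the corollary. The $\epsilon$-SNC (\ref{liminf-PDE-pre}) with $\alpha \equiv 0$ becomes $\liminf_{t \downarrow 0} t^{-1} d_K(S_t x) \leq \epsilon$ for $x \in K$, which is precisely (i) of the corollary. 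The equivalence of (i), (ii), (iii) then follows verbatim from the equivalence stated in Theorem \ref{thm-dist-PDE}.

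For the additional statement under $K \subset \cald(A)$, I would invoke assertion (1) of Theorem \ref{thm-dist-PDE}, again with $\alpha \equiv 0$. Condition (\ref{SNC-D-A}) specialises to $\liminf_{t \downarrow 0} t^{-1} d_K(x + tAx) \leq \epsilon$ for $x \in K$, so (1) of Theorem \ref{thm-dist-PDE} hands us directly the desired equivalence with the corollary's three statements.

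There is no genuine obstacle here: the only things to check are that $\alpha \equiv 0$ is admissible (trivially Lipschitz with $L = 0$) and that the algebraic substitutions $\xi(t;x) = S_t x$ and $\beta + L = \beta$ are performed consistently. Thus the proof consists of a single sentence citing Theorem \ref{thm-dist-PDE} applied to $\alpha \equiv 0$, together with assertion (1) therein for the $K \subset \cald(A)$ part.
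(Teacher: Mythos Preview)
Your proposal is correct and matches the paper's approach exactly: the paper's proof is the single sentence ``This is an immediate consequence of Theorem \ref{thm-dist-PDE},'' and your specialization $\alpha \equiv 0$, $L = 0$, $\xi(t;x) = S_t x$ is precisely the intended reading of that sentence.
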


\begin{proof}
This is an immediate consequence of Theorem \ref{thm-dist-PDE}.
\end{proof}

\begin{example}
We consider the real-valued ODE
\begin{align}\label{ODE}
\left\{
\begin{array}{rcl}
\dot{\xi}(t) & = & -\beta \xi(t)
\\ \xi(0) & = & x
\end{array}
\right.
\end{align}
for some constant $\beta \in \bbr$. Then the solutions are given by
\begin{align}\label{solutions-ex-ODE}
\xi(t;x) = x e^{-\beta t}, \quad (t,x) \in \bbr_+ \times \bbr.
\end{align}
We assume that the closed subset $K \subset \bbr$ is given by $K = [a,\infty)$ for some constant $a > 0$. Let $x \in K$ be arbitrary. We distinguish the following two cases:
\begin{itemize}
\item If $x > a$, then for $t > 0$ small enough we have
\begin{align*}
\frac{1}{t} d_K(x - t \beta x) = 0.
\end{align*}
\item Now suppose that $x=a$. If $\beta \leq 0$, then we have
\begin{align*}
\frac{1}{t} d_K(x - t \beta x) = 0, \quad t > 0,
\end{align*}
and if $\beta > 0$, then we have
\begin{align*}
\frac{1}{t} d_K(x - t \beta x) = \frac{1}{t} | (a - t \beta a) - a | = \beta a, \quad t > 0.
\end{align*}
\end{itemize}
Consequently, using Corollary \ref{cor-dist-PDE} we deduce that for each $\epsilon \geq 0$ the following statements are equivalent:
\begin{enumerate}
\item[(i)] We have $\beta^+ a \leq \epsilon$, where $\beta^+ = \max \{ \beta,0 \}$ denotes the positive part of $\beta$.

\item[(ii)] We have
\begin{align}\label{est-ex-ODE}
d_K(\xi(t;x)) \leq e^{-\beta t} d_K(x) + \varphi_{-\beta}(t) \epsilon, \quad (t,x) \in \bbr_+ \times \bbr.
\end{align}

\item[(iii)] There exists $\delta > 0$ such that
\begin{align*}
d_K(\xi(t;x)) \leq \varphi_{-\beta}(t) \epsilon, \quad (t,x) \in [0,\delta] \times K.
\end{align*}
\end{enumerate}
Now, suppose that $\beta > 0$ and choose $\epsilon := \beta a$. Then by (\ref{est-ex-ODE}) we have
\begin{align*}
d_K(\xi(t;x)) \leq (a-x)^+ e^{-\beta t} + a(1 - e^{-\beta t}), \quad (t,x) \in \bbr_+ \times \bbr.
\end{align*}
Hence, for $x \leq a$ we have
\begin{align}\label{dist-ex-ODE-case-1}
d_K(\xi(t;x)) \leq a - x e^{-\beta t}, \quad t \in \bbr_+,
\end{align}
and for $x \geq a$ we have
\begin{align}\label{dist-ex-ODE-case-2}
d_K(\xi(t;x)) \leq a(1 - e^{-\beta t}), \quad t \in \bbr_+.
\end{align}
Using that the solutions of the ODE (\ref{ODE}) are given by (\ref{solutions-ex-ODE}), we can compute the distances appearing in (\ref{dist-ex-ODE-case-1}) and (\ref{dist-ex-ODE-case-2}) explicitly. For $x \leq a$ we have
\begin{align*}
d_K(\xi(t;x)) = d_K(x e^{-\beta t}) = a - x e^{-\beta t}, \quad t \in \bbr_+,
\end{align*}
showing that the upper bound in (\ref{dist-ex-ODE-case-1}) is attained. On the other hand, for $x > a$ we have
\begin{align*}
d_K(\xi(t;x)) = d_K(x e^{-\beta t}) = 0, \quad t \in \bigg[ 0, \frac{\ln(x/a)}{\beta} \bigg],
\end{align*}
showing that in (\ref{dist-ex-ODE-case-2}) we have strict inequalities.
\end{example}

\section{Time-inhomogeneous partial differential equations}\label{sec-PDEs-inh}

So far, we have considered time-homogeneous PDEs. For our purposes, we will also need some results for time-inhomogeneous PDEs, which we provide in this section. As in Section \ref{sec-PDEs}, let $X$ be a Banach space, and let $A$ be the infinitesimal generator of a $C_0$-semigroup $(S_t)_{t \geq 0}$ on $X$. We consider a time-inhomogeneous PDE of the form
\begin{align}\label{PDE-inh}
\left\{
\begin{array}{rcl}
\dot{\xi}(t) & = & A \xi(t) + \alpha(t,\xi(t))
\\ \xi(0) & = & x
\end{array}
\right.
\end{align}
with a measurable mapping $\alpha : \bbr_+ \times X \to X$. We assume there are constants $L,C \geq 0$ such that for all $t \in \bbr_+$ and $x,y \in X$ we have
\begin{align*}
\| \alpha(t,x) - \alpha(t,y) \| &\leq L \| x-y \|,
\\ \| \alpha(t,x) \| &\leq C (1 + \| x \|).
\end{align*}
Then for all $(s,x) \in \bbr_+ \times X$ there exists a unique mild solution to the PDE (\ref{PDE-inh}); that is, a continuous function $\xi(\,\cdot\,;s,x) : [s,\infty) \to X$ such that
\begin{align*}
\xi(t;s,x) = S_{t-s} x + \int_s^t S_{t-u} \alpha(u,\xi(u;s,x)) du, \quad t \in [s,\infty).
\end{align*}

\begin{lemma}[Flow property]\label{lemma-flow}
For all $0 \leq r \leq s \leq t < \infty$ and $x \in X$ we have
\begin{align}\label{flow}
\xi(t;r,x) = \xi(t;s,\xi(s;r,x)).
\end{align}
\end{lemma}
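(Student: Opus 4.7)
The plan is to show that both sides of \eqref{flow} satisfy the same mild-solution integral equation on $[s,\infty)$ with the same initial condition at time $s$, and then invoke uniqueness of mild solutions for the PDE \eqref{PDE-inh}.

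More concretely, I would fix $r \in \bbr_+$ and $x \in X$ and set $\eta(t) := \xi(t;r,x)$ for $t \geq r$, so that by definition
\begin{align*}
\eta(t) = S_{t-r} x + \int_r^t S_{t-u} \alpha(u,\eta(u)) du, \quad t \geq r.
\end{align*}
Fix $s \in [r,\infty)$. For $t \geq s$ I would split the integral at $s$ and use the semigroup law $S_{t-r} = S_{t-s} S_{s-r}$ as well as $S_{t-u} = S_{t-s} S_{s-u}$ for $u \in [r,s]$, which allows one to pull $S_{t-s}$ out of the first part:
\begin{align*}
\eta(t) = S_{t-s} \bigg( S_{s-r} x + \int_r^s S_{s-u} \alpha(u,\eta(u)) du \bigg) + \int_s^t S_{t-u} \alpha(u,\eta(u)) du.
\end{align*}
The bracketed expression is precisely $\eta(s) = \xi(s;r,x)$, so $\eta$ restricted to $[s,\infty)$ satisfies the mild-solution identity for \eqref{PDE-inh} with initial time $s$ and initial value $\xi(s;r,x)$.

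By the uniqueness statement for mild solutions of \eqref{PDE-inh} (guaranteed by the global Lipschitz assumption on $\alpha(t,\cdot)$ via the standard Banach fixed point / Gronwall argument on bounded intervals), this mild solution coincides with $\xi(\,\cdot\,;s,\xi(s;r,x))$ on $[s,\infty)$, yielding \eqref{flow}. The only genuinely delicate point is the interchange of $S_{t-s}$ with the finite Bochner integral $\int_r^s S_{s-u}\alpha(u,\eta(u))du$, but this is immediate from linearity and boundedness of $S_{t-s}$ together with the fact that $u \mapsto S_{s-u}\alpha(u,\eta(u))$ is a Bochner integrable $X$-valued function on $[r,s]$; no further technical obstacle is expected.
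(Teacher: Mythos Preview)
Your proposal is correct and follows essentially the same route as the paper: split the mild-solution integral at $s$, use the semigroup law to factor out $S_{t-s}$, recognize the bracketed expression as $\xi(s;r,x)$, and conclude by uniqueness of mild solutions. The only cosmetic difference is your introduction of the shorthand $\eta$ and your explicit remark on interchanging $S_{t-s}$ with the Bochner integral, which the paper performs silently.
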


\begin{proof}
Let $r \in \bbr_+$ and $x \in X$ be arbitrary. Then we have
\begin{align*}
\xi(s;r,x) &= S_{s-r} x + \int_r^s S_{s-u} \alpha(u,\xi(u;r,x)) du, \quad s \in [r,\infty).
\end{align*}
Now, let $s \in [r,\infty)$ be arbitrary. Then we obtain
\begin{align*}
\xi(t;r,x) &= S_{t-r} x + \int_r^t S_{t-u} \alpha(u,\xi(u;r,x)) du
\\ &= S_{t-s} S_{s-r} x + S_{t-s} \int_r^s S_{s-u} \alpha(u,\xi(u;r,x)) du + \int_s^t S_{t-u} \alpha(u,\xi(u;r,x)) du
\\ &= S_{t-s} \xi(s;r,x) + \int_s^t S_{t-u} \alpha(u,\xi(u;r,x)) du, \quad t \in [s,\infty).
\end{align*}
Hence, by the uniqueness of solutions to the PDE (\ref{PDE-inh}), the flow property (\ref{flow}) follows.
\end{proof}

For the next auxiliary result, recall the definition (\ref{varphi-gamma}) of the function $\varphi_{\gamma} : \bbr_+ \to \bbr_+$.

\begin{lemma}\label{lemma-varphi-calc}
Let $\gamma \geq 0$ be arbitrary. For all $0 \leq r \leq s \leq t < \infty$ we have
\begin{align*}
e^{\gamma(t-s)} \varphi_{\gamma}(s-r) + \varphi_{\gamma}(t-s) = \varphi_{\gamma}(t-r).
\end{align*}
\end{lemma}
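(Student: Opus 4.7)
The plan is to use the explicit closed-form expressions for $\varphi_\gamma$ given just before the lemma, splitting into the two cases $\gamma = 0$ and $\gamma > 0$. The case $\gamma = 0$ reduces to $\varphi_0(u) = u$, so the identity becomes $(s-r) + (t-s) = t-r$, which is immediate.

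For the case $\gamma > 0$, I would substitute $\varphi_\gamma(u) = (e^{\gamma u} - 1)/\gamma$ into both sides. The left-hand side becomes
\begin{align*}
e^{\gamma(t-s)} \cdot \frac{e^{\gamma(s-r)}-1}{\gamma} + \frac{e^{\gamma(t-s)}-1}{\gamma} = \frac{e^{\gamma(t-r)} - e^{\gamma(t-s)}}{\gamma} + \frac{e^{\gamma(t-s)}-1}{\gamma},
\end{align*}
and the middle terms cancel, leaving $(e^{\gamma(t-r)} - 1)/\gamma = \varphi_\gamma(t-r)$, as claimed.

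As an alternative (and perhaps more conceptual) route that avoids splitting cases, I would work directly from the integral definition \eqref{varphi-gamma}. Writing
\begin{align*}
e^{\gamma(t-s)} \varphi_\gamma(s-r) = \int_0^{s-r} e^{\gamma(t-r-v)} \, dv
\end{align*}
and performing the substitution $w = v + (s-r)$ in $\varphi_\gamma(t-s) = \int_0^{t-s} e^{\gamma(t-s-v)}\, dv$ to obtain
\begin{align*}
\varphi_\gamma(t-s) = \int_{s-r}^{t-r} e^{\gamma(t-r-w)}\, dw,
\end{align*}
the two integrals concatenate over $[0,t-r]$ and produce $\varphi_\gamma(t-r)$. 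There is no real obstacle; this is pure bookkeeping, and the only thing worth being explicit about is verifying that the boundary term $e^{\gamma(t-s)}$ in the first summand has precisely the form needed to match the exponent after shifting the variable of integration.
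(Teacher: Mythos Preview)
Your proposal is correct. In fact, your ``alternative route'' via the integral definition is precisely the paper's proof: the paper writes out $e^{\gamma(t-s)}\varphi_\gamma(s-r)$ and $\varphi_\gamma(t-s)$ as integrals, absorbs the prefactor into the exponent of the first integral, shifts the variable in the second to get integration over $[s-r,t-r]$, and concatenates. Your primary case-splitting approach using the closed form $(e^{\gamma u}-1)/\gamma$ is an equally valid one-line computation; it is marginally less elegant only in that it requires treating $\gamma=0$ separately, which the integral argument handles uniformly.
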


\begin{proof}
The straightforward calculation
\begin{align*}
&e^{\gamma(t-s)} \varphi_{\gamma}(s-r) + \varphi_{\gamma}(t-s) = e^{\gamma(t-s)} \int_0^{s-r} e^{\gamma(s-r-u)} du + \int_0^{t-s} e^{\gamma(t-s-u)} du
\\ &= \int_0^{s-r} e^{\gamma(t-r-u)} du + \int_{s-r}^{t-r} e^{\gamma(t-r-u)} du = \int_0^{t-r} e^{\gamma(t-r-u)} du = \varphi_{\gamma}(t-r)
\end{align*}
concludes the proof.
\end{proof}

For $\gamma,\delta \in \bbr_+$ we define the function $\Phi_{\gamma,\delta} : \bbr^2 \times \bbr_+ \to \bbr$ as
\begin{align}\label{Phi-gamma-delta}
\Phi_{\gamma,\delta}(d,e,t) := e^{\gamma t} d + \varphi_{\gamma}(t) (e+\delta), \quad (d,e,t) \in \bbr^2 \times \bbr_+,
\end{align}
where the function $\varphi_{\gamma} : \bbr_+ \to \bbr_+$ was introduced in (\ref{varphi-gamma}).

\begin{lemma}\label{lemma-Phi}
For all $\gamma,\delta \in \bbr_+$ the following statements are true:
\begin{enumerate}
\item For all $d,e \in \bbr^2$ we have $\Phi(d,e,0) = d$.

\item For all $d_1,d_2 \in \bbr_+$ with $d_1 \leq d_2$ and all $(e,t) \in \bbr \times \bbr_+$ we have
\begin{align*}
\Phi_{\gamma,\delta}(d_1,e,t) \leq \Phi_{\gamma,\delta}(d_2,e,t).
\end{align*}
\item For all $(d,e) \in \bbr^2$ and all $r,s,t \in \bbr_+$ with $r \leq s \leq t$ we have
\begin{align*}
\Phi_{\gamma,\delta}(\Phi_{\gamma,\delta}(d,e,s-r),e,t-s) = \Phi_{\gamma,\delta}(d,e,t-r).
\end{align*}
\end{enumerate}
\end{lemma}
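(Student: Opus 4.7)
The plan is to verify each of the three claims by direct computation from the definition $\Phi_{\gamma,\delta}(d,e,t) = e^{\gamma t} d + \varphi_{\gamma}(t)(e+\delta)$, using the facts that $\varphi_{\gamma}(0) = 0$ (Remark \ref{rem-varphi}) and the identity established in Lemma \ref{lemma-varphi-calc}. None of the steps should present a serious obstacle; the only one with any computational substance is (iii).

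For (i), I would simply substitute $t = 0$: since $e^{\gamma \cdot 0} = 1$ and $\varphi_{\gamma}(0) = 0$, the definition immediately yields $\Phi_{\gamma,\delta}(d,e,0) = d$. For (ii), observe that the second summand $\varphi_{\gamma}(t)(e+\delta)$ is independent of the first argument, while the first summand $e^{\gamma t} d$ is monotone in $d$ because $e^{\gamma t} \geq 0$; hence $d_1 \leq d_2$ gives the required inequality.

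For (iii), the composition rule, I would set $a := \Phi_{\gamma,\delta}(d,e,s-r) = e^{\gamma(s-r)} d + \varphi_{\gamma}(s-r)(e+\delta)$ and then expand
\begin{align*}
\Phi_{\gamma,\delta}(a,e,t-s) &= e^{\gamma(t-s)} a + \varphi_{\gamma}(t-s)(e+\delta) \\
&= e^{\gamma(t-r)} d + \bigl[ e^{\gamma(t-s)} \varphi_{\gamma}(s-r) + \varphi_{\gamma}(t-s) \bigr] (e+\delta).
\end{align*}
Applying Lemma \ref{lemma-varphi-calc} to the bracketed expression, it collapses to $\varphi_{\gamma}(t-r)$, and the right-hand side becomes $e^{\gamma(t-r)} d + \varphi_{\gamma}(t-r)(e+\delta) = \Phi_{\gamma,\delta}(d,e,t-r)$, as desired. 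The only subtle point worth flagging is that this semigroup-type identity genuinely requires Lemma \ref{lemma-varphi-calc}; without it the claim would not follow from algebra alone.
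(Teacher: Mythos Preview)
Your proof is correct and follows essentially the same route as the paper: parts (i) and (ii) are dismissed as immediate from the definition, and part (iii) is obtained by expanding the outer $\Phi_{\gamma,\delta}$, substituting the inner one, and invoking Lemma~\ref{lemma-varphi-calc} to collapse $e^{\gamma(t-s)}\varphi_{\gamma}(s-r) + \varphi_{\gamma}(t-s)$ to $\varphi_{\gamma}(t-r)$.
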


\begin{proof}
The first two statements immediately follows from the definition (\ref{Phi-gamma-delta}). Furthermore, using Lemma \ref{lemma-varphi-calc} we obtain
\begin{align*}
&\Phi_{\gamma,\delta}(\Phi_{\gamma,\delta}(d,e,s-r),e,t-s) = e^{\gamma (t-s)} \Phi_{\gamma,\delta}(d,e,s-r) + \varphi_{\gamma}(t-s)(e+\delta)
\\ &= e^{\gamma (t-s)} \big( e^{\gamma (s-r)} d + \varphi_{\gamma}(s-r) (e+\delta) \big) + \varphi_{\gamma}(t-s)(e+\delta)
\\ &= e^{\gamma (t-r)} d + \varphi_{\gamma}(t-r) (e+\delta) = \Phi_{\gamma,\delta}(d,e,t-r),
\end{align*}
proving the third statement.
\end{proof}

\begin{lemma}\label{lemma-one-interval}
We assume that the semigroup $(S_t)_{t \geq 0}$ is pseudo-contractive; that is, there exists a constant $\beta \in \bbr_+$ such that
\begin{align*}
\| S_t \| \leq e^{\beta t} \quad \text{for all $t \geq 0$.}
\end{align*}
Let $0 \leq r < u < \infty$ be arbitrary. We assume there exists a function $a : X \to X$ such that
\begin{align*}
\alpha(t,x) = a(x), \quad (t,x) \in [r,u) \times X.
\end{align*}
Then for each $\epsilon \geq 0$ the following statements are equivalent:
\begin{enumerate}
\item[(i)] We have
\begin{align}\label{SNC-one-interval}
\liminf_{t \downarrow 0} \frac{1}{t} d_K(S_t x + t a(x)) \leq \epsilon, \quad x \in K.
\end{align}
\item[(ii)] For all $s,t \in [r,u]$ with $s \leq t$ we have
\begin{align*}
d_K(\xi(t;s,x)) \leq \Phi_{\beta+L,0}(d_K(x),\epsilon,t-s), \quad x \in X. 
\end{align*}
\end{enumerate}
If the previous conditions are fulfilled, and if there is a constant $B \in \bbr_+$ such that $\| a(x) \| \leq B$ for all $x \in X$, then for all $s,t \in [r,u)$ with $s \leq t$ we have
\begin{align*}
d_K(\xi(t;s,x)) \leq \Phi_{\beta,2B}(d_K(x),\epsilon,t-s), \quad x \in X.
\end{align*}
\end{lemma}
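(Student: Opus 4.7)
The plan is to reduce the assertion to Theorem~\ref{thm-dist-PDE} by exploiting the fact that on $[r,u)$ the drift $\alpha(t,\cdot)$ is constantly equal to $a$, so that the time-inhomogeneous flow coincides on that interval with the time-homogeneous flow generated by the coefficient $a$.

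Concretely, I would first fix $s \in [r,u)$ and, via the change of variable $v = s+w$ in the integral representation for $\xi(s+\tau;s,x)$, verify that $\tilde\xi(\tau;x) := \xi(s+\tau;s,x)$, $\tau \in [0,u-s]$, satisfies the fixed-point equation
\begin{align*}
\tilde\xi(\tau;x) = S_\tau x + \int_0^\tau S_{\tau-w} a(\tilde\xi(w;x)) dw.
\end{align*}
Since $a$ is Lipschitz with constant $L$, uniqueness of mild solutions forces $\tilde\xi(\cdot;x)$ to agree on $[0,u-s]$ with the mild solution of the time-homogeneous PDE (\ref{PDE}) with drift $a$; in particular $\tilde\xi$ is independent of the choice of $s$, and $d_K(\xi(t;s,x)) = d_K(\tilde\xi(t-s;x))$ for all $s \in [r,u)$ and $t \in [s,u]$.

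With this reduction in hand, statement (i) of the lemma is precisely the $\epsilon$-SNC (\ref{liminf-PDE-pre}) for the coefficient $a$, while statement (ii) after the substitution $\tau = t-s$ becomes the estimate in Theorem~\ref{thm-dist-PDE}(ii),
\begin{align*}
d_K(\tilde\xi(\tau;x)) \leq e^{(\beta+L)\tau} d_K(x) + \varphi_{\beta+L}(\tau)\epsilon = \Phi_{\beta+L,0}(d_K(x),\epsilon,\tau).
\end{align*}
Therefore the equivalence (i) $\Leftrightarrow$ (ii) of the lemma follows from (i) $\Leftrightarrow$ (ii) in Theorem~\ref{thm-dist-PDE} applied to $\tilde\xi$; for the direction (ii) $\Rightarrow$ (i), specializing (ii) to $x \in K$ and $s = r$ already recovers the hypothesis of Theorem~\ref{thm-dist-PDE}(iii) on the interval $[0,u-r]$. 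The degenerate boundary case $s = t = u$ is handled by Lemma~\ref{lemma-Phi}(1), which gives $\Phi_{\beta+L,0}(d_K(x),\epsilon,0) = d_K(x)$.

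For the addendum, under the extra hypothesis $\|a(x)\| \leq B$ I would invoke part~(2) of Theorem~\ref{thm-dist-PDE}, which yields
\begin{align*}
d_K(\tilde\xi(\tau;x)) \leq e^{\beta\tau} d_K(x) + \varphi_\beta(\tau)(\epsilon+2B) = \Phi_{\beta,2B}(d_K(x),\epsilon,\tau),
\end{align*}
and translating back through $\tau = t-s$ finishes the proof. The argument is essentially bookkeeping: there is no real obstacle, the only mild subtlety being to extend the identity $\xi(s+\tau;s,x) = \tilde\xi(\tau;x)$ from $[0,u-s)$ up to the closed right endpoint $\tau = u-s$, which is immediate from continuity of both sides in $\tau$.
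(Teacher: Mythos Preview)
Your proposal is correct and follows essentially the same route as the paper: both introduce the shifted process $\tilde\xi(\tau;x)=\xi(s+\tau;s,x)$ (the paper calls it $\eta_s$), verify via the substitution $v=s+w$ that it solves the time-homogeneous mild equation with drift $a$, and then invoke Theorem~\ref{thm-dist-PDE} for both directions and for the bounded-drift addendum. The only cosmetic differences are that you note the independence of $\tilde\xi$ from $s$ explicitly and handle the endpoint $s=t=u$ via Lemma~\ref{lemma-Phi}, whereas the paper simply allows $s\in[r,u]$ from the start (the case $s=u$ being trivial since the interval degenerates to a point).
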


\begin{proof}
Let $s \in [r,u]$ and $x \in X$ be arbitrary. We introduce the function $\eta_s(\, \cdot \,;x) : [0,u-s] \to X$ as
\begin{align*}
\eta_s(t;x) := \xi(s+t;s,x), \quad t \in [0,u-s].
\end{align*}
Then for all $t \in [0,u-s]$ we have
\begin{align*}
\eta_s(t;x) &= \xi(s+t;s,x) 
\\ &= S_t x + \int_s^{s+t} S_{s+t-v} \alpha(v,\xi(v;s,x)) dv
\\ &= S_t x + \int_s^{s+t} S_{s+t-v} a(\xi(v;s,x)) dv
\\ &= S_t x + \int_0^{t} S_{t-v} a(\xi(s+v;s,x)) dv
\\ &= S_t x + \int_0^t S_{t-v} a(\eta_s(v;x)) dv.
\end{align*}
Therefore $\eta_s(\, \cdot \,;x)$ is the unique mild solution to the time-homogeneous PDE
\begin{align*}
\left\{
\begin{array}{rcl}
\dot{\eta}_s(t) & = & A \eta_s(t) + a(\eta_s(t))
\\ \eta_s(0) & = & x
\end{array}
\right.
\end{align*}
on the interval $[0,u-s]$. Now, we are ready to prove the claimed implications:

\noindent (i) $\Rightarrow$ (ii): Let $s,t \in [r,u]$ with $s \leq t$ and $x \in X$ be arbitrary. By Theorem \ref{thm-dist-PDE} we obtain
\begin{align*}
d_K(\xi(t;s,x)) = d_K(\eta_s(t-s;x)) \leq \Phi_{\beta+L,0}(d_K(x),\epsilon,t-s).
\end{align*}

\noindent(ii) $\Rightarrow$ (i): By assumption we have
\begin{align*}
d_K(\eta_r(t;x)) = d_K(\xi(r+t;r,x)) \leq \Phi_{\beta+L,0}(d_K(x),\epsilon,t), \quad (t,x) \in [0,u-r] \times X.
\end{align*}
Therefore, by Theorem \ref{thm-dist-PDE} we deduce (\ref{SNC-one-interval}).

\noindent The proof of the additional statement is analogous to that of the implication (i) $\Rightarrow$ (ii).
\end{proof}

\begin{theorem}\label{thm-piecewise}
We assume that the semigroup $(S_t)_{t \geq 0}$ is pseudo-contractive; that is, there exists a constant $\beta \in \bbr_+$ such that
\begin{align*}
\| S_t \| \leq e^{\beta t} \quad \text{for all $t \geq 0$.}
\end{align*}
Let $(t_n)_{n \in \bbn_0}$ be a strictly increasing sequence with $t_0 = 0$ and $t_n \to \infty$ for $n \to \infty$. We assume that for all $n \in \bbn_0$ there exists a function $a_n : X \to X$ such that
\begin{align*}
\alpha(t,x) = a_n(x), \quad (t,x) \in [t_n,t_{n+1}) \times X.
\end{align*}
Then for each $\epsilon \geq 0$ the following statements are equivalent:
\begin{enumerate}
\item[(i)] We have
\begin{align}\label{liminf-1}
\liminf_{t \downarrow 0} \frac{1}{t} d_K(S_t x + t \alpha(s,x)) \leq \epsilon, \quad (s,x) \in \bbr_+ \times K.
\end{align}
\item[(ii)] We have
\begin{align}\label{liminf-2}
\liminf_{t \downarrow 0} \frac{1}{t} d_K(S_t x + t a_n(x)) \leq \epsilon, \quad (n,x) \in \bbn_0 \times K.
\end{align}
\item[(iii)] We have
\begin{align*}
d_K(\xi(t;s,x)) \leq e^{(\beta + L) (t-s)} d_K(x) + \varphi_{\beta + L}(t-s) \epsilon, \quad 0 \leq s \leq t < \infty \text{ and } x \in X.
\end{align*}
\end{enumerate}
If the previous conditions are fulfilled, and if there is a constant $B \in \bbr_+$ such that $\| \alpha(t,x) \| \leq B$ for all $(t,x) \in \bbr_+ \times X$, then we have
\begin{align*}
d_K(\xi(t;s,x)) \leq e^{\beta (t-s)} d_K(x) + \varphi_{\beta}(t-s) (\epsilon + 2B), \quad 0 \leq s \leq t < \infty \text{ and } x \in X.
\end{align*}
\end{theorem}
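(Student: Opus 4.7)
The plan is to first observe that (i) $\Leftrightarrow$ (ii) is immediate from the piecewise-constant structure of $\alpha$, then to derive (ii) $\Rightarrow$ (iii) by chaining the single-interval bound of Lemma \ref{lemma-one-interval} across the grid points via the flow property, and finally to close the loop with (iii) $\Rightarrow$ (ii) by specializing to a single interval $[t_n, t_{n+1}]$.

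For (i) $\Leftrightarrow$ (ii): given $(s,x) \in \bbr_+ \times K$, the unique $n \in \bbn_0$ with $s \in [t_n, t_{n+1})$ satisfies $\alpha(s,x) = a_n(x)$, so (ii) implies (i); conversely, evaluating (i) at $s = t_n$ yields (ii). For the nontrivial implication (ii) $\Rightarrow$ (iii), I fix $0 \leq s \leq t < \infty$ and $x \in X$, and let $s = s_0 < s_1 < \cdots < s_k = t$ be the refinement of $\{s,t\}$ obtained by inserting those grid points $t_i$ that lie strictly in $(s,t)$ (a finite collection). On each subinterval $[s_{j-1}, s_j]$ the coefficient $\alpha$ equals a single function $a_{n_j}$, so Lemma \ref{lemma-one-interval} (direction (i) $\Rightarrow$ (ii)) applied on that interval produces
\begin{align*}
d_K(\xi(s_j; s_{j-1}, y)) \leq \Phi_{\beta+L, 0}(d_K(y), \epsilon, s_j - s_{j-1}), \quad y \in X.
\end{align*}
I then iterate by choosing $y = \xi(s_{j-1}; s, x)$ and using the flow property from Lemma \ref{lemma-flow}, together with monotonicity of $\Phi_{\beta+L,0}$ in its first argument and the composition identity $\Phi_{\beta+L,0}(\Phi_{\beta+L,0}(d, \epsilon, a), \epsilon, b) = \Phi_{\beta+L,0}(d, \epsilon, a+b)$ from Lemma \ref{lemma-Phi}. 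A finite induction on $k$ collapses the telescoping bound to $d_K(\xi(t; s, x)) \leq \Phi_{\beta+L, 0}(d_K(x), \epsilon, t-s)$, which is precisely (iii).

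For (iii) $\Rightarrow$ (ii), I fix $n \in \bbn_0$, set $s = t_n$, and restrict $t$ to $[t_n, t_{n+1}]$. As in the proof of Lemma \ref{lemma-one-interval}, on this interval the solution coincides with the mild solution $\eta(\,\cdot\,; x)$ of the time-homogeneous PDE with constant drift $a_n$, and condition (iii) rewrites as $d_K(\eta(\tau; x)) \leq \Phi_{\beta+L, 0}(d_K(x), \epsilon, \tau)$ for $(\tau, x) \in [0, t_{n+1} - t_n] \times X$; the (ii) $\Rightarrow$ (i) direction of Lemma \ref{lemma-one-interval} then delivers the $\epsilon$-SNC for $a_n$, which is (ii). The additional statement under the boundedness assumption $\|\alpha(t,x)\| \leq B$ is proved by the same chaining argument, replacing $\Phi_{\beta+L, 0}$ by $\Phi_{\beta, 2B}$ throughout via the final clause of Lemma \ref{lemma-one-interval}; the composition identity in Lemma \ref{lemma-Phi} still applies verbatim with $\delta = 2B$. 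The main technical point is the iterated chaining step, but once the flow property and the composition identity for $\Phi_{\gamma,\delta}$ are in hand, it reduces to a finite induction over a bounded number of grid points in $[s,t]$ and carries no genuine difficulty.
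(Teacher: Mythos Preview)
Your proposal is correct and follows essentially the same route as the paper's proof: the equivalence (i) $\Leftrightarrow$ (ii) is declared obvious, the implication (ii) $\Rightarrow$ (iii) is established by partitioning $[s,t]$ at the grid points and inducting via the flow property (Lemma \ref{lemma-flow}), the single-interval estimate of Lemma \ref{lemma-one-interval}, and the monotonicity and composition identity for $\Phi_{\gamma,\delta}$ from Lemma \ref{lemma-Phi}; and (iii) $\Rightarrow$ (ii) is read off directly from Lemma \ref{lemma-one-interval}. The additional statement under boundedness is likewise handled by the same chaining with $\Phi_{\beta,2B}$, exactly as in the paper.
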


\begin{proof}
(i) $\Leftrightarrow$ (ii): This equivalence is obvious.

\noindent(ii) $\Rightarrow$ (iii): Let $0 \leq s \leq t < \infty$ be arbitrary. If $s=t$, then by Lemma \ref{lemma-Phi} the claimed estimate holds true for every $x \in X$. Hence, we may assume that $s < t$. There exist $m \in \bbn$, numbers $s = s_0 < \ldots < s_m = t$, and a mapping $\pi : \{ 1,\ldots,m \} \to \bbn_0$ such that for all $j=1,\ldots,m$ we have
\begin{align*}
\alpha(t,x) = a_{\pi(j)}(x), \quad (t,x) \in [s_{j-1},s_j) \times X.
\end{align*}
By induction we will show that for all $j=0,\ldots,m$ we have
\begin{align}\label{intervals-induction}
d_K(\xi(s_j;s_0,x)) \leq \Phi_{\beta+L,0}(d_K(x),\epsilon,s_j-s_0), \quad x \in X.
\end{align}
Taking into account Lemma \ref{lemma-Phi}, the estimate (\ref{intervals-induction}) holds true for $j=0$. We proceed with the induction step $j-1 \to j$. Using the flow property (see Lemma \ref{lemma-flow}), Lemma \ref{lemma-one-interval} and Lemma \ref{lemma-Phi}, for each $x \in X$ we obtain
\begin{align*}
d_K(\xi(s_j;s_0,x)) &= d_K(\xi(s_j;s_{j-1},\xi(s_{j-1};s_0,x)))
\\ &\leq \Phi_{\beta+L,0}(d_K(\xi(s_{j-1};s_0,x)),\epsilon,s_j-s_{j-1})
\\ &\leq \Phi_{\beta+L,0}(\Phi_{\beta+L,0}(d_K(x),\epsilon,s_{j-1}-s_0),\epsilon,s_j-s_{j-1})
\\ &= \Phi_{\beta+L,0}(d_K(x),\epsilon,s_j-s_0),
\end{align*}
proving (\ref{intervals-induction}). Since $s_0 = s$ and $s_m = t$, the claimed estimate follows.

\noindent(iii) $\Rightarrow$ (ii): This implication is a consequence of Lemma \ref{lemma-one-interval}.

\noindent The proof of the additional statement is analogous to that of the implication (ii) $\Rightarrow$ (iii). 
\end{proof}

\begin{remark}
Suppose that $K \subset \cald(A)$. Then conditions (\ref{liminf-1}) and (\ref{liminf-2}) can alternatively be expressed by using Lemma \ref{lemma-liminf}.
\end{remark}

\section{Stochastic partial differential equations}\label{sec-SPDEs}

In this section we present our results about the distance between closed sets and the solutions to SPDEs. Let $(\Omega,\calf,(\calf_t)_{t \in [0,T]},\bbp)$ be a filtered probability space satisfying the usual conditions, where $T > 0$ is a finite time horizon. Let $H$ be a separable Hilbert space and let $A$ be the infinitesimal generator of a $C_0$-semigroup $(S_t)_{t \geq 0}$ on $H$. For what follows, we suppose that the semigroup $(S_t)_{t \geq 0}$ is pseudo-contractive; that is, there is a constant $\beta \in \bbr_+$ such that
\begin{align}\label{pseudo-beta}
\| S_t \| \leq e^{\beta t} \quad \text{for all $t \geq 0$.}
\end{align}
Let $U$ be a separable Hilbert space, and let $W$ be an $U$-valued $Q$-Wiener process for some nuclear, self-adjoint, positive definite linear operator $Q \in L(U)$; see \cite[Def. 4.2]{Da_Prato}. There exist an orthonormal basis $\{ e_j \}_{j \in \bbn}$ of $U$ and a sequence $(\lambda_j)_{j \in \bbn} \subset (0,\infty)$ with $\sum_{j \in \bbn} \lambda_j < \infty$ such that
\begin{align*}
Q e_j = \lambda_j e_j \quad \text{for all $j \in \bbn$.}
\end{align*}
The space $U_0 := Q^{1/2}(U)$, equipped with the inner product
\begin{align*}
\langle u,v \rangle_{U_0} := \langle Q^{-1/2}u, Q^{-1/2}v \rangle_U
\end{align*}
is another separable Hilbert space. We denote by $L_2^0(H) := L_2(U_0,H)$ the space of all Hilbert-Schmidt operators from $U_0$ into $H$. We fix the orthonormal basis $\{ g_j \}_{j \in \bbn}$ of $U_0$ given by $g_j := \sqrt{\lambda_j} e_j$ for each $j \in \bbn$, and for each $\Sigma \in L_2^0(H)$ we define $\Sigma^j \in H$ as $\Sigma^j := \Sigma g_j$ for $j \in \bbn$. According to \cite[Prop. 4.3]{Da_Prato} the sequence $(B^j)_{j \in \bbn}$ defined as
\begin{align}\label{Brownian-motion-def}
B^j := \frac{1}{\sqrt{\lambda_j}} \la W,e_j \ra_U
\end{align}
is a sequence of independent real-valued standard Wiener processes. We consider the SPDE
\begin{align}\label{SPDE}
\left\{
\begin{array}{rcl}
dX(t) & = & \big( A X(t) + \alpha(X(t)) \big) dt + \sigma(X(t)) dW(t) \medskip
\\ X(0) & = & x,
\end{array}
\right.
\end{align}
where the coefficients $\alpha : H \to H$ and $\sigma : H \to L_2^0(H)$ are assumed to be Lipschitz continuous. This ensures that for each $x \in H$ there exists a mild solution to the SPDE (\ref{SPDE}) with $X(0) = x$; that is, a continuous and adapted process $X(\,\cdot\,;x)$ such that $\bbp$-almost surely
\begin{align*}
X(t;x) = S_t x + \int_0^t S_{t-s} \alpha(X(s;x)) ds + \int_0^t S_{t-s} \sigma(X(s;x)) dW(s), \quad t \in [0,T]. 
\end{align*}
Moreover, we assume that $\sigma^j \in C_b^2(H)$ for each $j \in \bbn$. Let us define the constants
\begin{align}\label{L-j-def}
L_j := \frac{1}{2} ( \kappa_{j,2}^2 + \kappa_{j,1} \kappa_{j,3} ), \quad j \in \bbn,
\end{align}
where we have set
\begin{align}\label{kappa-1}
\kappa_{j,1} &:= \sup_{h \in H} \| \sigma^j(h) \|, \quad j \in \bbn,
\\ \label{kappa-2} \kappa_{j,2} &:= \sup_{h \in H} \| D \sigma^j(h) \|, \quad j \in \bbn,
\\ \label{kappa-3} \kappa_{j,3} &:= \sup_{h \in H} \| D^2 \sigma^j(h) \|, \quad j \in \bbn.
\end{align}
Then for each $r \in \bbn$ the mapping
\begin{align}\label{rho-r}
\rho_r : H \to H, \quad \rho_r(h) := \frac{1}{2} \sum_{j=1}^r D \sigma^j(h) \sigma^j(h)
\end{align}
is Lipschitz continuous with Lipschitz constant
\begin{align}\label{Lip-rho-r}
L_{\rho_r} := \sum_{j=1}^r L_j.
\end{align}
We assume that $\sum_{j=1}^{\infty} L_j < \infty$ and that for each $h \in H$ the limit $\lim_{r \to \infty} \rho_r(h)$ exists. Then the corresponding mapping
\begin{align}\label{rho}
\rho : H \to H, \quad \rho(h) := \lim_{r \to \infty} \rho_r(h) = \frac{1}{2} \sum_{j=1}^{\infty} D \sigma^j(h) \sigma^j(h)
\end{align}
is Lipschitz continuous with Lipschitz constant
\begin{align}\label{Lip-rho}
L_{\rho} := \sum_{j=1}^{\infty} L_j.
\end{align}
Let $K \subset H$ be a closed subset. For $\epsilon \geq 0$ we call the condition
\begin{align}\label{SSNC}
\liminf_{t \downarrow 0} \frac{1}{t} d_K \big( S_t h + t ( \alpha(h) - \rho(h) + \sigma(h) u ) \big) \leq \epsilon, \quad h \in K \text{ and } u \in U_0
\end{align}
the \emph{$\epsilon$-SSNC} (\emph{$\epsilon$-stochastic semigroup Nagumo's condition}), as for $\epsilon = 0$ this is the well-known SSNC appearing in \cite{Jachimiak} and \cite{Nakayama}.

\begin{lemma}\label{lemma-K-domain}
Suppose that $K \subset \cald(A)$. Then for all $\epsilon \geq 0$ the $\epsilon$-SSNC (\ref{SSNC}) is satisfied if and only if
\begin{align}\label{liminf-DA}
\liminf_{t \downarrow 0} \frac{1}{t} d_K \big( h + t (Ah + \alpha(h) - \rho(h) + \sigma(h)u) \big) \leq \epsilon, \quad h \in K \text{ and } u \in U_0.
\end{align}
\end{lemma}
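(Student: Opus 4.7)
The proof is essentially a direct invocation of Lemma \ref{lemma-liminf}, which already establishes the equivalence of the semigroup-type liminf condition and the generator-type liminf condition for any fixed point $x \in K \cap \cald(A)$ and any fixed vector $v \in X$. Since the hypothesis $K \subset \cald(A)$ gives us $K \cap \cald(A) = K$, every $h \in K$ is eligible.

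My plan is to fix $h \in K$ and $u \in U_0$, and then apply Lemma \ref{lemma-liminf} with $x := h$ (which lies in $K \cap \cald(A)$ by the assumption) and $v := \alpha(h) - \rho(h) + \sigma(h)u \in H$. Under this substitution, condition (i) of Lemma \ref{lemma-liminf} becomes exactly the bracketed liminf inside the $\epsilon$-SSNC (\ref{SSNC}) for that particular choice of $h$ and $u$, while condition (ii) becomes exactly the bracketed liminf in (\ref{liminf-DA}) for the same $h$ and $u$. Since Lemma \ref{lemma-liminf} asserts that these two liminfs are simultaneously $\leq \epsilon$, the equivalence holds pointwise in $(h,u)$.

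To finish, I would note that both (\ref{SSNC}) and (\ref{liminf-DA}) are universally quantified statements over the same parameter set $K \times U_0$, so the pointwise equivalence just established immediately upgrades to the equivalence of the two universal statements. There is no real obstacle here, since Lemma \ref{lemma-liminf} already contains all the analytic content (in particular, the fact that $\|Ah - t^{-1}(S_t h - h)\| \to 0$ as $t \downarrow 0$ because $h \in \cald(A)$, combined with the triangle-type inequality of Lemma \ref{lemma-dist-x-y}); the present lemma is purely a packaging step that specializes that result to the stochastic setting with the concrete drift vector $v = \alpha(h) - \rho(h) + \sigma(h)u$.
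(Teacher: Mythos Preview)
Your proposal is correct and matches the paper's own approach exactly: the paper's proof is the single line ``This is an immediate consequence of Lemma \ref{lemma-liminf},'' and your argument spells out precisely how that lemma is applied pointwise in $(h,u) \in K \times U_0$ with $v = \alpha(h) - \rho(h) + \sigma(h)u$.
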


\begin{proof}
This is an immediate consequence of Lemma \ref{lemma-liminf}.
\end{proof}

We have the following result concerning the $L^2$-norm of the distance between the closed set $K$ and the solutions to the SPDE (\ref{SPDE}).

\begin{theorem}\label{thm-SPDE}
For all $\delta > 0$ and $x_0 \in H$ there exist a continuous function $\Phi_{\delta}^{x_0} : \bbr^2 \times [0,T] \to \bbr$ satisfying
\begin{itemize}
\item $\Phi_{\delta}^{x_0}(d,e,t) \geq 0$ for all $(d,e,t) \in \bbr_+^2 \times [0,T]$,

\item $\Phi_{\delta}^{x_0}(d,e,0) = d$ for all $(d,e) \in \bbr^2$,

\item the function $t \mapsto \Phi_{\delta}^{x_0}(d,e,t)$ is increasing for all $(d,e) \in \bbr_+^2$,
\end{itemize}
and an open neighborhood $U(x_0,\delta) \subset H$ of $x_0$ such that for each $\epsilon \geq 0$ the $\epsilon$-SSNC (\ref{SSNC}) implies
\begin{align}\label{E-dist}
\bbe \big[ d_K(X(t;x))^2 \big]^{1/2} \leq \delta + \Phi_{\delta}^{x_0}(d_K(x),\epsilon,t), \quad (t,x) \in [0,T] \times U(x_0,\delta).
\end{align}
Moreover, we can choose the function $\Phi_{\delta}^{x_0}$ such that it is linear in $(d,e)$. More precisely, there are continuous, increasing functions $\phi_{\delta}^{x_0},\psi_{\delta}^{x_0} : [0,T] \to \bbr_+$ with $\phi_{\delta}^{x_0}(0) = 1$ and $\psi_{\delta}^{x_0}(0) = 0$ such that
\begin{align}\label{dist-lin-structure}
\Phi_{\delta}^{x_0}(d,e,t) = \phi_{\delta}^{x_0}(t) d + \psi_{\delta}^{x_0}(t) e, \quad (d,e,t) \in \bbr^2 \times [0,T].
\end{align}
\end{theorem}

\begin{remark}\label{rem-p-norm}
By H\"{o}lder's inequality, the estimate (\ref{E-dist}) also holds true for 
\begin{align*}
\bbe \big[ d_K(X(t;x))^p \big]^{1/p} 
\end{align*}
with an arbitrary $p \in [1,2]$. Of course, this also applies to the estimates in the upcoming corollaries.
\end{remark}

\begin{remark}
Given a reference point $x_0 \in H$ and a target error $\delta > 0$, the estimate (\ref{E-dist}) shows that the $p$-th moment of the distance $d_K(X(t;x))$ between $K$ and the solution $X(t;x)$ of the SPDE (\ref{SPDE}) starting from any initial point $x$ contained in a neighborhood $U(x_0,\delta)$ of $x_0$ can be bounded from above by $\delta$ plus an ``adjustment term''. As (\ref{dist-lin-structure}) shows, this adjustment term consists of a linear combination of the distance $d_K(x)$ and $\epsilon$ from the $\epsilon$-SSNC (\ref{SSNC}).

The coefficients of $d_K(x)$ and $\epsilon$ may depend on time $t$, but they can be taken uniformly with respect to $x$ within $U(x_0,\delta)$. Actually, since the coefficients are continuous and increasing relative to the time $t$, they can be chosen to be constant with respect to $t \in [0,T]$ and $x \in U(x_0,\delta)$. The mathematical significance of Theorem \ref{thm-SPDE} can be understood from the fact that, even if a functional defined on an infinite-dimensional space is continuous, it is generally the case that it does not attain a maximum on a bounded set.

When modeling financial or natural phenomena using SPDEs, if the coefficients $\alpha$ and $\sigma$ of the SPDE are fixed and only the initial point $x$ is varied within a neighborhood of the currently observed point $x_0$ as a reference point to reevaluate the $p$-th moment of the distance $d_K(X(t;x))$, we can take the coefficients of $d_K(x)$ and $\epsilon$ as common.
\end{remark}

\begin{remark}
As we will see from the proof of Theorem \ref{thm-SPDE}, concerning the neighborhoods we have $U(x_0,\delta) \downarrow \{ x_0 \}$ as $\delta \downarrow 0$.
\end{remark}

Before we provide the proof of Theorem \ref{thm-SPDE}, let us present some examples and consequences.

\begin{example}
Consider the deterministic PDE (\ref{PDE}), and let $\delta > 0$ and $x_0 \in H$ be arbitrary.
\begin{enumerate}
\item According to Theorem \ref{thm-dist-PDE}, the estimate (\ref{E-dist}) is satisfied with $\Phi_{\delta}^{x_0} : \bbr^2 \times [0,T] \to \bbr$ given by
\begin{align*}
\Phi_{\delta}^{x_0}(d,e,t) = e^{(\beta+L)t} d + \varphi_{\beta + L}(t) e
\end{align*}
and $U(x_0,\delta) = H$. Note that the function $\Phi_{\delta}^{x_0}$ is linear in $(d,e)$, and that here the function $\Phi_{\delta}^{x_0}$ and the neighborhood $U(x_0,\delta)$ do not depend on $(x_0,\delta)$.

\item Suppose, in addition, there is a constant $B \in \bbr_+$ such that $\| \alpha(x) \| \leq B$ for all $x \in X$. Then, according to Theorem \ref{thm-dist-PDE}, the estimate (\ref{E-dist}) is satisfied with $\Phi_{\delta}^{x_0} : \bbr^2 \times [0,T] \to \bbr$ given by
\begin{align*}
\Phi_{\delta}^{x_0}(d,e,t) = e^{\beta t} d + \varphi_{\beta}(t) (e + 2B)
\end{align*}
and $U(x_0,\delta) = H$. Note that the function $\Phi_{\delta}^{x_0}$ is affine in $(d,e)$, and that here the function $\Phi_{\delta}^{x_0}$ and the neighborhood $U(x_0,\delta)$ do not depend on $(x_0,\delta)$.
\end{enumerate}
\end{example}

\begin{corollary}\label{cor-d-zero}
Suppose the $\epsilon$-SSNC (\ref{SSNC}) is satisfied for some $\epsilon > 0$. Then for all $x_0 \in H$ and $\delta > 0$ there exist a time horizon $S \in (0,T]$ and an open neighborhood $U(x_0,\delta) \subset H$ of $x_0$ such that
\begin{align*}
\bbe \big[ d_K(X(t;x))^2 \big]^{1/2} \leq \delta, \quad (t,x) \in [0,S] \times K \cap U(x_0,\delta).
\end{align*}
\end{corollary}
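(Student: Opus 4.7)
The plan is to invoke Theorem \ref{thm-SPDE} with a halved tolerance and then exploit the fact that the $\epsilon$-coefficient in the error function vanishes at $t=0$. Fix $x_0 \in H$ and $\delta > 0$. Applying Theorem \ref{thm-SPDE} with $\delta/2$ in place of $\delta$ yields an open neighborhood $U(x_0,\delta/2) \subset H$ of $x_0$ together with continuous, increasing functions $\phi_{\delta/2}^{x_0}, \psi_{\delta/2}^{x_0} : [0,T] \to \bbr_+$ satisfying $\phi_{\delta/2}^{x_0}(0)=1$ and $\psi_{\delta/2}^{x_0}(0)=0$, such that the $\epsilon$-SSNC (which is assumed to hold) implies
\begin{align*}
\bbe \big[ d_K(X(t;x))^2 \big]^{1/2} \leq \frac{\delta}{2} + \phi_{\delta/2}^{x_0}(t) d_K(x) + \psi_{\delta/2}^{x_0}(t) \epsilon
\end{align*}
for all $(t,x) \in [0,T] \times U(x_0,\delta/2)$.

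The key observation is that for $x \in K$ the term $\phi_{\delta/2}^{x_0}(t) d_K(x)$ vanishes identically. Since $\psi_{\delta/2}^{x_0}$ is continuous with $\psi_{\delta/2}^{x_0}(0)=0$ and $\epsilon > 0$ is fixed, I can choose $S \in (0,T]$ small enough that $\psi_{\delta/2}^{x_0}(S) \, \epsilon \leq \delta/2$. Monotonicity of $\psi_{\delta/2}^{x_0}$ propagates this bound to every $t \in [0,S]$. Setting $U(x_0,\delta) := U(x_0,\delta/2)$ and combining these two facts then yields
\begin{align*}
\bbe \big[ d_K(X(t;x))^2 \big]^{1/2} \leq \frac{\delta}{2} + 0 + \frac{\delta}{2} = \delta
\end{align*}
for every $(t,x) \in [0,S] \times K \cap U(x_0,\delta)$, which is exactly the claim.

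There is essentially no obstacle to this argument; the substantive content is packaged inside Theorem \ref{thm-SPDE}, and this corollary merely extracts the short-time, on-set consequence of the linear structure and the vanishing of $\psi_{\delta/2}^{x_0}$ at the origin. The only mild care required is to use the halved tolerance $\delta/2$ so that the additive constant from Theorem \ref{thm-SPDE} and the $\psi_{\delta/2}^{x_0}(t)\epsilon$ term can each be absorbed into one half of $\delta$.
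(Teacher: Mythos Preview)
Your proof is correct and follows essentially the same approach as the paper: apply Theorem \ref{thm-SPDE} with the halved tolerance $\delta/2$, drop the $\phi_{\delta/2}^{x_0}(t)\,d_K(x)$ term on $K$, and use continuity (plus monotonicity) of $\psi_{\delta/2}^{x_0}$ at $0$ to choose $S$. If anything, you are slightly more careful than the paper in spelling out that monotonicity of $\psi_{\delta/2}^{x_0}$ is what propagates the bound from $t=S$ to all of $[0,S]$.
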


\begin{proof}
By Theorem \ref{thm-SPDE} there exist a continuous, increasing function $\psi_{\delta}^{x_0} : [0,T] \to \bbr_+$ with $\psi_{\delta}^{x_0}(0) = 0$ and an open neighborhood $U(x_0,\delta) \subset H$ of $x_0$ such that
\begin{align*}
\bbe \big[ d_K(X(t;x))^2 \big]^{1/2} \leq \frac{\delta}{2} + \psi_{\delta}^{x_0}(t) \epsilon, \quad (t,x) \in [0,T] \times K \cap U(x_0,\delta).
\end{align*}
Furthermore, by the continuity of $\psi_{\delta}^{x_0}$ there exists $S \in (0,T]$ such that $\psi_{\delta}^{x_0}(S) \leq \frac{\delta}{2 \epsilon}$. This completes the proof.
\end{proof}

\begin{remark}
As can be seen from the proof of Corollary \ref{cor-d-zero}, the time interval $[0,S]$ becomes larger if $\epsilon > 0$ becomes smaller.
\end{remark}

\begin{corollary}\label{cor-SPDE-inv}
Suppose that the $\epsilon$-SSNC (\ref{SSNC}) is satisfied with $\epsilon = 0$; that is, we have the SSNC
\begin{align}\label{SSNC-zero}
\liminf_{t \downarrow 0} \frac{1}{t} d_K \big( S_t h + t ( \alpha(h) - \rho(h) + \sigma(h) u ) \big) = 0, \quad h \in K \text{ and } u \in U_0.
\end{align}
Then the following statements are true:
\begin{enumerate}
\item For all $\delta > 0$ and $x_0 \in H$ there exist $\eta > 0$ and an open neighborhood $U(x_0,\delta) \subset H$ of $x_0$ such that
\begin{align*}
\bbe \big[ d_K(X(t;x))^2 \big]^{1/2} \leq \delta, \quad (t,x) \in [0,T] \times U(x_0,\delta) \cap \{ y \in H : d_K(y) \leq \eta \}.
\end{align*}
\item In particular, the set $K$ is invariant for the SPDE (\ref{SPDE}); that is, for each $x \in K$ we have $X(\,\cdot\,;x) \in K$ up to an evanescent set.
\end{enumerate}
\end{corollary}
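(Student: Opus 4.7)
The plan is to derive both statements directly from Theorem \ref{thm-SPDE} applied with $\epsilon = 0$, taking advantage of the linear structure
\[
\Phi_{\delta}^{x_0}(d,e,t) = \phi_{\delta}^{x_0}(t)\, d + \psi_{\delta}^{x_0}(t)\, e
\]
provided there, together with the boundary values $\phi_{\delta}^{x_0}(0) = 1$ and $\psi_{\delta}^{x_0}(0) = 0$ and the continuity/monotonicity of $\phi_{\delta}^{x_0}, \psi_{\delta}^{x_0}$ on $[0,T]$. Note that condition \eqref{SSNC-zero} is exactly the $\epsilon$-SSNC with $\epsilon = 0$, so Theorem \ref{thm-SPDE} is applicable and the term involving $\psi_{\delta}^{x_0}(t)\, \epsilon$ drops out entirely.

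For statement (1), I would apply Theorem \ref{thm-SPDE} with $\delta/2$ in place of $\delta$, which yields
\[
\bbe \big[ d_K(X(t;x))^2 \big]^{1/2} \leq \frac{\delta}{2} + \phi_{\delta/2}^{x_0}(t)\, d_K(x)
\]
for every $(t,x) \in [0,T] \times U(x_0,\delta/2)$. Since $\phi_{\delta/2}^{x_0}$ is continuous and increasing on $[0,T]$, the constant $C := \phi_{\delta/2}^{x_0}(T)$ is finite. I would then set $\eta := \delta/(2C)$ and rename the neighborhood $U(x_0,\delta/2)$ as $U(x_0,\delta)$; the desired bound $\bbe[d_K(X(t;x))^2]^{1/2} \leq \delta$ then follows whenever $x \in U(x_0,\delta)$ and $d_K(x) \leq \eta$.

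For statement (2), I would fix $x \in K$ and apply Theorem \ref{thm-SPDE} with the specific choice $x_0 := x$. Since $U(x,\delta)$ is an open neighborhood of $x$ itself, we have $x \in U(x,\delta)$ for every $\delta > 0$, and because $d_K(x) = 0$ the estimate simplifies to
\[
\bbe \big[ d_K(X(t;x))^2 \big]^{1/2} \leq \delta, \quad t \in [0,T].
\]
Letting $\delta \downarrow 0$ gives $d_K(X(t;x)) = 0$ almost surely for each fixed $t \in [0,T]$. To pass from this pointwise-in-$t$ statement to the evanescent-set formulation, I would pick a countable dense subset $D \subset [0,T]$, form the null set $N := \bigcup_{t \in D} \{ \omega : d_K(X(t;x)(\omega)) > 0 \}$, and use continuity of the sample path $t \mapsto X(t;x)(\omega)$ together with closedness of $K$ to conclude, for every $\omega \notin N$, that $X(t;x)(\omega) \in K$ for all $t \in [0,T]$.

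The main obstacle is essentially non-existent here, since all the genuinely hard analytical work (Wong--Zakai approximation, piecewise-constant approximations, estimates on the error function) has already been absorbed into Theorem \ref{thm-SPDE}. The only point requiring a moment of care is the passage in part (2) from pointwise almost-sure membership in $K$ to an evanescent-set statement, which is a standard but non-automatic step relying on the continuity of $X(\,\cdot\,;x)$ and the closedness of $K$.
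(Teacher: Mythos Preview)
Your proposal is correct and follows essentially the same approach as the paper: apply Theorem~\ref{thm-SPDE} with $\epsilon=0$ and half of $\delta$, then choose $\eta := \delta/(2\phi_{\delta/2}^{x_0}(T))$ for part~(1), and use $d_K(x)=0$ for $x\in K$ in part~(2). The paper is slightly terser, omitting the explicit countable-dense-set argument you supply for the evanescent-set conclusion, but this is a standard detail and your inclusion of it is harmless.
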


\begin{proof}
By Theorem \ref{thm-SPDE} there exist a continuous, increasing function $\phi_{\delta}^{x_0} : [0,T] \to \bbr_+$ with $\phi_{\delta}^{x_0}(0) = 1$ and an open neighborhood $U(x_0,\delta) \subset H$ of $x_0$ such that
\begin{align*}
\bbe \big[ d_K(X(t;x))^2 \big]^{1/2} \leq \frac{\delta}{2} + \phi_{\delta}^{x_0}(t) d_K(x), \quad (t,x) \in [0,T] \times U(x_0,\delta).
\end{align*}
Therefore, choosing $\eta > 0$ as
\begin{align*}
\eta := \frac{\delta}{2 \phi_{\delta}^{x_0}(T)},
\end{align*}
the first statement follows. The second statement about the invariance of $K$ is an immediate consequence, because for each $\delta > 0$ we obtain
\begin{align*}
\bbe \big[ d_K(X(t;x))^2 \big]^{1/2} \leq \delta, \quad (t,x) \in [0,T] \times K.
\end{align*}
This concludes the proof.
\end{proof}

\begin{remark}
It is well-known that the closed set $K$ is invariant for the SPDE (\ref{SPDE}) if and only if the SSNC (\ref{SSNC-zero}) is satisfied; see \cite[Prop. 1.1]{Nakayama}. The previous Corollary \ref{cor-SPDE-inv} provides a generalization of the statement that the SSNC (\ref{SSNC-zero}) implies invariance of the set $K$.
\end{remark}

\begin{example}[Geometric Brownian motion]
Consider the real-valued SDE
\begin{align}\label{SDE-geom}
\left\{
\begin{array}{rcl}
dX(t) & = & \mu X(t) dt + \sigma X(t) dW(t)
\\ X(0) & = & x
\end{array}
\right.
\end{align}
with constants $\mu,\sigma \in \bbr$ and a real-valued Wiener process $W$. We fix an arbitrary $p \in [1,2]$ and assume that the closed subset $K \subset \bbr$ is given by $K = (-\infty,0]$. Then the SSNC (\ref{SSNC-zero}) is fulfilled, and hence, by Corollary \ref{cor-SPDE-inv}, the subset $K$ is invariant for the SDE (\ref{SDE-geom}). Furthermore, by Theorem \ref{thm-SPDE} and Remark \ref{rem-p-norm} for all $x_0 \in \bbr$ and $\delta > 0$ there exist a continuous, increasing function $\phi_{\delta}^{x_0} : [0,T] \to \bbr_+$ with $\phi_{\delta}^{x_0}(0) = 1$ and an open neighborhood $U(x_0,\delta) \subset \bbr$ of $x_0$ such that
\begin{align}\label{est-geom}
\bbe \big[ d_K(X(t;x))^p \big]^{1/p} \leq \delta + \phi_{\delta}^{x_0}(t) d_K(x), \quad (t,x) \in [0,T] \times U(x_0,\delta).
\end{align}
Note that the solutions to the SDE (\ref{SDE-geom}) are given by
\begin{align*}
X(t;x) = x \cdot \exp \bigg( \bigg( \mu - \frac{\sigma^2}{2} \bigg) t + \sigma W(t) \bigg), \quad (t,x) \in [0,T] \times \bbr.
\end{align*}
Therefore, we have
\begin{align*}
X(t;x)^p = x^p \cdot \exp \bigg( p \bigg( \mu - \frac{\sigma^2}{2} \bigg) t + p \sigma W(t) \bigg), \quad (t,x) \in [0,T] \times \bbr.
\end{align*}
Thus, we obtain
\begin{align*}
\bbe \big[ X(t;x)^p \big]^{1/p} &= x \cdot \exp \bigg( p \bigg( \mu - \frac{\sigma^2}{2} \bigg) t + \frac{p^2 \sigma^2}{2} t \bigg)^{1/p}
\\ &= x \cdot \exp \bigg( p \bigg( \mu + \frac{(p-1)\sigma^2}{2} \bigg) t \bigg)^{1/p}
\\ &= x \cdot \exp \bigg( \bigg( \mu + \frac{(p-1)\sigma^2}{2} \bigg) t \bigg), \quad (t,x) \in [0,T] \times \bbr,
\end{align*}
and hence
\begin{align*}
\bbe \big[ d_K(X(t;x))^p \big]^{1/p} = \exp \bigg( \bigg( \mu + \frac{(p-1)\sigma^2}{2} \bigg) t \bigg) d_K(x), \quad (t,x) \in [0,T] \times \bbr.
\end{align*}
Consequently, in the estimate (\ref{est-geom}) we can choose
\begin{align*}
\phi_{\delta}^{x_0}(t) = \exp \bigg( \bigg( \mu + \frac{(p-1)\sigma^2}{2} \bigg) t \bigg), \quad t \in [0,T],
\end{align*}
and $U(x_0,\delta) = \bbr$. Note that here the function $\phi_{\delta}^{x_0}$ and the neighborhood $U(x_0,\delta)$ do not depend on $(x_0,\delta)$.
\end{example}

In order to provide the proof of Theorem \ref{thm-SPDE}, let us prepare some notation and auxiliary results. By \cite[Thm. 9.1]{Da_Prato} there is a constant $C_T > 0$ such that
\begin{align}\label{initial-cond-dist}
\bbe \big[ \| X(t;x) - X(t;y) \|^2 \big] \leq C_T \| x-y \|^2 \quad \text{for all $t \in [0,T]$ and $x,y \in H$.}
\end{align}
Let $r \in \bbn$ be arbitrary. Let $G_r \subset U_0$ be the finite dimensional subspace $G_r := \lin \{ g_1,\ldots,g_r \}$, denote by $\pi_r \in L(U_0)$ the orthogonal projection on $G_r$, which is given by
\begin{align*}
\pi_r u = \sum_{j=1}^r \la u,g_j \ra_{U_0} \, g_j, \quad u \in U_0,
\end{align*}
and let $S_r : L_2^0(H) \to L_2^0(H)$ be the linear operator given by
\begin{align*}
S_r \Sigma := \Sigma \circ \pi_r, \quad \Sigma \in L_2^0(H).
\end{align*}
Note that $S_r \in L(L_2^0(H))$ with $\| S_r \| \leq 1$, and that for each $\Sigma \in L_2^0(H)$ we have
\begin{align}\label{S-R-conv}
\| S_r \Sigma - \Sigma \|_{L_2^0(H)}^2 = \sum_{j > r} \| \Sigma^j \|^2 \to 0 \quad \text{as $r \to \infty$.}
\end{align}
Furthermore, for all $j \in \bbn$ we have
\begin{align}\label{S-r-j}
(S_r \Sigma)^j = \Sigma \pi_r g_j = \Sigma^j \bbI_{\{ j \leq r \}}.
\end{align}
Now, for each $r \in \bbn$ we consider the SPDE
\begin{align}\label{SPDE-r}
\left\{
\begin{array}{rcl}
dX_r(t) & = & \big( A X_r(t) + \alpha_r(X_r(t)) \big) dt + \sigma_r(X_r(t)) dW(t)\medskip
\\ X_r(0) & = & x.
\end{array}
\right.
\end{align}
Here the mapping $\alpha_r : H \to H$ is given by
\begin{align}\label{alpha-r}
\alpha_r(h) := \alpha(h) + \rho_r(h) - \rho(h) = \alpha(h) - \frac{1}{2} \sum_{j > r} D \sigma^j(h) \sigma^j(h), \quad h \in H,
\end{align}
where we recall that $\rho_r : H \to H$ and $\rho : H \to H$ were defined in (\ref{rho-r}) and (\ref{rho}). Furthermore, the mapping $\sigma_r : H \to L_2^0(H)$ is given by
\begin{align}\label{sigma-r}
\sigma_r(h) := S_r \sigma(h), \quad h \in H.
\end{align}
Then, noting (\ref{S-r-j}), by \cite[Prop. 2.4.5]{Liu-Roeckner} the SPDE (\ref{SPDE-r}) can be expressed as
\begin{align}\label{SPDE-r-B}
\left\{
\begin{array}{rcl}
dX_r(t) & = & \big( A X_r(t) + \alpha_r(X_r(t)) \big) dt + \sum_{j=1}^r \sigma^j(X_r(t)) dB^j(t) \medskip
\\ X_r(0) & = & x,
\end{array}
\right.
\end{align}
where $B = (B^1,\ldots,B^r)$ is the finite dimensional Brownian motion given by (\ref{Brownian-motion-def}). Furthermore, since $\rho_r(h) \to \rho(h)$ we have
\begin{align*}
\lim_{r \to \infty} \| \alpha_r(h) - \alpha(h) \| = 0 \quad \text{for each $h \in H$,}
\end{align*}
and by (\ref{S-R-conv}) we have
\begin{align*}
\lim_{r \to \infty} \| \sigma_r(h) - \sigma(h) \|_{L_2^0(H)} = 0 \quad \text{for each $h \in H$.}
\end{align*}
Now, let $r \in \bbn$ be arbitrary. Recalling that (\ref{Lip-rho-r}) is a Lipschitz constant of $\rho_r$ and that (\ref{Lip-rho}) is a Lipschitz constant of $\rho$, we have
\begin{align*}
\| \alpha_r(h) - \alpha_r(g) \| \leq ( L_{\alpha} + L_{\rho} ) \| h-g \|, \quad h,g \in H,
\end{align*}
where $L_{\alpha}$ denotes a Lipschitz constant of $\alpha$. Furthermore, recalling that $\| S_r \| \leq 1$, we have
\begin{align*}
\| \sigma_r(h) - \sigma_r(g) \| \leq L_{\sigma} \| h-g \|, \quad h,g \in H,
\end{align*}
where $L_{\sigma}$ denotes a Lipschitz constant of $\sigma$. Consequently, by \cite[Thm. 3.7]{Atma-book} we have
\begin{align}\label{conv-1}
\lim_{r \to \infty} \sup_{t \in [0,T]} \bbe \big[ \| X(t;x) - X_r(t;x) \|^2 \big] = 0, \quad x \in H.
\end{align}
Now, we fix an integer $r \in \bbn$. For each $m \in \bbn$ we define the quantities
\begin{align*}
\delta_m &:= \frac{T}{m}, \quad m \in \mathbb{N},
\\ [t]_m^- &:= k \delta_m, \quad k \delta_m \leq t < (k+1)\delta_m, \quad k = 0,\ldots,m-1,
\\ [t]_m^+ &:= (k+1) \delta_m, \quad k \delta_m \leq t < (k+1)\delta_m, \quad k = 0,\ldots,m-1,
\end{align*}
and the real-valued processes $(B_m^j(t))_{t \in [0,T]}$ for $m \in \mathbb{N}$ and $j=1\ldots,r$ as
\begin{align*}
B_m^j(t) := B^j([t]_m^-) + \frac{t - [t]_m^-}{\delta_m} (B^j([t]_m^+) - B^j([t]_m^-)), \quad t \in [0,T].
\end{align*}
Note that for all $m \in \bbn$ we have
\begin{align*}
[t]_m^- \leq t < [t]_m^+, \quad t \in [0,T],
\end{align*}
and that for all $m \in \bbn$ and all $j=1\ldots,r$ we have
\begin{align}\label{derivative}
\dot{B}_m^j(t) = \frac{B^j([t]_m^+) - B^j([t]_m^-)}{\delta_m}, \quad t \in [0,T].
\end{align}
For each $m \in \bbn$ we consider the Wong-Zakai approximation $\xi_{r,m}(\cdot) = \xi_{r,m}(\cdot,\omega) : [0,T] \to H$, which is the mild solution to the deterministic PDE
\begin{align}\label{WZ-PDE-intro}
\left\{
\begin{array}{rcl}
\dot{\xi}_{r,m}(t) & = & A \xi_{r,m}(t) + \alpha_r(\xi_{r,m}(t)) - \rho_r(\xi_{r,m}(t)) + \sum_{j=1}^r \sigma^j(\xi_{r,m}(t)) \dot{B}_m^j(t) \medskip
\\ \xi_{r,m}(0) & = & x.
\end{array}
\right.
\end{align}
By \cite[Thm. 2.1]{Nakayama-Support} for every $p > 1$ we have
\begin{align}\label{conv-2}
\lim_{m \to \infty} \bbe \bigg[ \sup_{t \in [0,T]} \| X_r(t;x) - \xi_{r,m}(t;x) \|^{2p} \bigg] = 0, \quad x \in H.
\end{align}
Noting that $\alpha_r - \rho_r = \alpha - \rho$, the PDE (\ref{WZ-PDE-intro}) can equivalently be written as
\begin{align}\label{WZ-PDE-intro-2}
\left\{
\begin{array}{rcl}
\dot{\xi}_{r,m}(t) & = & A \xi_{r,m}(t) + \alpha(\xi_{r,m}(t)) - \rho(\xi_{r,m}(t)) + \sum_{j=1}^r \sigma^j(\xi_{r,m}(t)) \dot{B}_m^j(t) \medskip
\\ \xi_{r,m}(0) & = & x.
\end{array}
\right.
\end{align}
Now, we fix an integer $m \in \bbn$ and define the nonnegative random variables
\begin{align}\label{Y-WZ}
Y_m^j := \sup_{t \in [0,T]} |\dot{B}_m^j(t)|, \quad j=1,\ldots,r.
\end{align}

\begin{lemma}\label{lemma-eta-iid}
There are i.i.d. random variables $\eta_m^{j,k} \sim {\rm N}(0,1/\delta_m)$ for $j=1,\ldots,r$ and $k=1,\ldots,m$ such that $\bbp$-almost surely
\begin{align*}
Y_m^j \leq \sum_{k=1}^m |\eta_m^{j,k}| \quad \text{for all $j=1,\ldots,r$.}
\end{align*}
\end{lemma}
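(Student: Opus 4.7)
The plan is to identify $\dot B_m^j$ on each grid subinterval with a scaled Brownian increment and then bound the supremum by a sum. Reading off (\ref{derivative}), for each $k=1,\ldots,m$ and $t\in[(k-1)\delta_m,k\delta_m)$ the value
\[
\dot B_m^j(t) = \frac{B^j(k\delta_m) - B^j((k-1)\delta_m)}{\delta_m}
\]
is constant in $t$. I would therefore set
\[
\eta_m^{j,k} := \frac{B^j(k\delta_m) - B^j((k-1)\delta_m)}{\delta_m}, \quad j=1,\ldots,r,\ k=1,\ldots,m,
\]
so that $\dot B_m^j(t) = \eta_m^{j,k}$ on $[(k-1)\delta_m,k\delta_m)$ by construction.

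Next, I would check the joint law of the family $\{\eta_m^{j,k}\}$. Each Brownian increment $B^j(k\delta_m) - B^j((k-1)\delta_m)$ is ${\rm N}(0,\delta_m)$-distributed, hence division by $\delta_m$ gives $\eta_m^{j,k} \sim {\rm N}(0,1/\delta_m)$. Independence across $k$ is the independence of increments of a single Brownian motion on disjoint intervals, while independence across $j$ follows from the fact that $(B^j)_{j\in\bbn}$ is a sequence of independent real-valued Wiener processes, as recorded in (\ref{Brownian-motion-def}). Combining these two observations yields mutual independence of the whole family, so the $\eta_m^{j,k}$ are i.i.d.\ ${\rm N}(0,1/\delta_m)$ as claimed.

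Finally, the pathwise bound is immediate: on each $[(k-1)\delta_m,k\delta_m)$ we have $|\dot B_m^j(t)| = |\eta_m^{j,k}|$, so taking the supremum gives
\[
Y_m^j = \max_{1\le k\le m} |\eta_m^{j,k}| \leq \sum_{k=1}^m |\eta_m^{j,k}| \quad \text{$\bbp$-a.s.}
\]
(the value of $\dot B_m^j$ at the finitely many grid points is irrelevant for the supremum). There is no substantive obstacle; the lemma is a bookkeeping consequence of the piecewise linear interpolation defining $B_m^j$ together with the independent-increments structure of the driving Wiener processes.
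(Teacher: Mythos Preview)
Your proof is correct and follows essentially the same approach as the paper: you define the same $\eta_m^{j,k}$ as scaled Brownian increments, verify the i.i.d.\ ${\rm N}(0,1/\delta_m)$ law from the independent-increments property and the independence of the $B^j$, and bound the supremum by the sum. The paper's argument is just a slightly terser version of what you wrote.
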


\begin{proof}
The random variables
\begin{align*}
\eta_m^{j,k} := \frac{1}{\delta_m} \big( B^j(k \delta_m) - B^j((k-1) \delta_m) \big), \quad j=1,\ldots,r \text{ and } k=1,\ldots,m
\end{align*}
are independent with $\eta_m^{j,k} \sim {\rm N}(0,1/\delta_m)$ for all $j=1,\ldots,r$ and $k=1,\ldots,m$. Furthermore, taking into account (\ref{derivative}) we have $\bbp$-almost surely
\begin{align*}
Y_m^j = \sup_{t \in [0,T]} |\dot{B}_m^j(t)| \leq \frac{1}{\delta_m} \sum_{k=1}^m | ( B^j(k\delta_m) - B^j((k-1) \delta_m) ) | = \sum_{k=1}^m |\eta_m^{j,k}|,
\end{align*}
completing the proof.
\end{proof}

\begin{lemma}\label{lemma-normal-moment-exp}
For every normally distributed random variable $Z \sim {\rm N}(0,\sigma^2)$ we have
\begin{align*}
\bbe[e^{|Z|}] \leq 2 \exp \bigg( \frac{\sigma^2}{2} \bigg).
\end{align*}
\end{lemma}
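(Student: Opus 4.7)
The plan is to use the standard bound $e^{|z|} \leq e^{z} + e^{-z}$, valid for all $z \in \bbr$, since when $z \geq 0$ we have $e^{|z|} = e^z \leq e^z + e^{-z}$, and when $z < 0$ we have $e^{|z|} = e^{-z} \leq e^z + e^{-z}$. Applying this pointwise to $Z$ and taking expectations yields
\begin{align*}
\bbe[e^{|Z|}] \leq \bbe[e^{Z}] + \bbe[e^{-Z}].
\end{align*}

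Next, I would use the fact that $Z \sim {\rm N}(0,\sigma^2)$ is symmetric, so $-Z$ has the same distribution as $Z$, giving $\bbe[e^{-Z}] = \bbe[e^{Z}]$. Combined with the well-known moment generating function identity $\bbe[e^{Z}] = \exp(\sigma^2/2)$ for a centered Gaussian, this immediately gives
\begin{align*}
\bbe[e^{|Z|}] \leq 2 \bbe[e^{Z}] = 2 \exp\bigg( \frac{\sigma^2}{2} \bigg),
\end{align*}
which is the claimed bound.

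There is no serious obstacle here; the entire argument reduces to the elementary pointwise inequality plus symmetry and the Gaussian moment generating function. The only remark worth making is that the factor $2$ on the right-hand side is the natural price paid for replacing $e^{|Z|}$ by $e^Z + e^{-Z}$, and the bound is tight in its order of magnitude as $\sigma \to 0$ or $\sigma \to \infty$.
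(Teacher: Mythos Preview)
Your proof is correct and essentially the same as the paper's: the paper writes $\bbe[e^{|Z|}] = \bbe[e^{Z}\bbI_{\{Z\geq 0\}}] + \bbe[e^{-Z}\bbI_{\{Z<0\}}]$ and bounds this by $2\bbe[e^{Z}]$ via symmetry, which is just a slight rephrasing of your pointwise inequality $e^{|z|}\leq e^{z}+e^{-z}$ followed by symmetry and the Gaussian moment generating function.
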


\begin{proof}
By the symmetry of $Z$ we have
\begin{align*}
\bbe[e^{|Z|}] = \bbe[e^Z \bbI_{\{ Z \geq 0 \}}] + \bbe[e^{-Z} \bbI_{\{ Z < 0 \}}] \leq 2 \bbe[e^Z] = 2 \exp \bigg( \frac{\sigma^2}{2} \bigg),
\end{align*}
completing the proof.
\end{proof}

By assumption, there is a constant $L > 0$ such that for all $h,g \in H$ we have
\begin{align}
\| a(h) - a(g) \| &\leq L \| h-g \|,
\\ \| \sigma^j(h) - \sigma^j(g) \| &\leq L \| h-g \|, \quad j=1,\ldots,r,
\end{align}
where $a := \alpha - \rho$. We define the nonnegative random variable
\begin{align}\label{Z-WZ}
Z_{r,m} := L \sum_{j=1}^r Y_m^j.
\end{align}

\begin{lemma}\label{lemma-dominating}
For each $u \in \bbr$ we have $\bbe[e^{u Z_{r,m}}] < \infty$.
\end{lemma}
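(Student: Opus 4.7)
The plan is to bound $Z_{r,m}$ from above by a sum of absolute values of independent Gaussian variables using Lemma \ref{lemma-eta-iid}, then factorize the exponential moment over independence and estimate each factor via Lemma \ref{lemma-normal-moment-exp}.

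First I would dispose of the easy case $u \leq 0$: since $Y_m^j \geq 0$ for each $j$, we have $Z_{r,m} \geq 0$, so $e^{u Z_{r,m}} \leq 1$ and the claim is immediate. For the interesting case $u > 0$, I would invoke Lemma \ref{lemma-eta-iid} to obtain i.i.d.\ random variables $\eta_m^{j,k} \sim {\rm N}(0,1/\delta_m)$ (with $j=1,\dots,r$ and $k=1,\dots,m$) such that $\bbp$-almost surely
\begin{align*}
Z_{r,m} = L \sum_{j=1}^r Y_m^j \leq L \sum_{j=1}^r \sum_{k=1}^m |\eta_m^{j,k}|.
\end{align*}

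Next I would monotonicity and independence to split the expectation as a product:
\begin{align*}
\bbe[e^{u Z_{r,m}}] \leq \bbe \bigg[ \exp \bigg( uL \sum_{j=1}^r \sum_{k=1}^m |\eta_m^{j,k}| \bigg) \bigg] = \prod_{j=1}^r \prod_{k=1}^m \bbe \big[ e^{uL |\eta_m^{j,k}|} \big].
\end{align*}
Since $uL \cdot \eta_m^{j,k} \sim {\rm N}(0, u^2 L^2 / \delta_m)$, applying Lemma \ref{lemma-normal-moment-exp} to each factor gives
\begin{align*}
\bbe \big[ e^{uL |\eta_m^{j,k}|} \big] \leq 2 \exp \bigg( \frac{u^2 L^2}{2 \delta_m} \bigg).
\end{align*}
Multiplying over the $rm$ factors yields the finite bound
\begin{align*}
\bbe[e^{u Z_{r,m}}] \leq 2^{rm} \exp \bigg( \frac{r m \, u^2 L^2}{2 \delta_m} \bigg) < \infty,
\end{align*}
which completes the argument.

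This proof is essentially routine given the two lemmas just above, so I do not anticipate any real obstacle; the only point requiring a small amount of care is separating the sign of $u$, since Lemma \ref{lemma-normal-moment-exp} is phrased for $\bbe[e^{|Z|}]$ and one must rescale $\eta_m^{j,k}$ by the positive constant $uL$ to apply it. The structure $Z_{r,m} \geq 0$ handles the negative-$u$ case trivially, and finiteness for all $u \in \bbr$ follows.
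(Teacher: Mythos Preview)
Your proof is correct and follows essentially the same approach as the paper: reduce to $u>0$, bound $Z_{r,m}$ via Lemma~\ref{lemma-eta-iid} by a sum of independent $|\eta_m^{j,k}|$, factorize by independence, and control each factor with Lemma~\ref{lemma-normal-moment-exp}. The only difference is that you carry the explicit bound $2^{rm}\exp(rm\,u^2L^2/(2\delta_m))$ through to the end, whereas the paper simply stops at the finite product.
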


\begin{proof}
It suffices to consider the case $u > 0$. By Lemmas \ref{lemma-eta-iid} and \ref{lemma-normal-moment-exp} we have
\begin{align*}
\bbe \big[ e^{u Z_{r,m}} \big] &= \bbe \bigg[ \exp \bigg( L u \sum_{j=1}^r Y_m^j \bigg) \bigg] \leq \bbe \bigg[ \exp \bigg( Lu \sum_{j=1}^r \sum_{k=1}^m | \eta_m^{j,k} | \bigg) \bigg]
\\ &= \prod_{j=1}^r \prod_{k=1}^m \bbe \big[ \exp \big( Lu |\eta_m^{j,k}| \big) \big] < \infty,
\end{align*}
completing the proof.
\end{proof}

Now, we set $\gamma := \beta + L$, where $\beta \geq 0$ stems from the growth estimate (\ref{pseudo-beta}) of the semigroup $(S_t)_{t \geq 0}$. Moreover, we define
\begin{align}\label{phi-def}
\phi_{r,m}(t) &:= \bbe \big[ e^{2(\gamma + Z_{r,m})t} \big]^{1/2}, \quad t \in [0,T],
\\ \label{psi-def} \psi_{r,m}(t) &:= \bbe \big[ \varphi_{\gamma + Z_{r,m}}^2(t) \big]^{1/2}, \quad t \in [0,T].
\end{align}

\begin{lemma}\label{lemma-linear-functions}
The definitions (\ref{phi-def}) and (\ref{psi-def}) provide well-defined, continuous and increasing functions $\phi_{r,m}, \psi_{r,m} : [0,T] \to \bbr_+$ with $\phi_{r,m}(0) = 1$ and $\psi_{r,m}(0) = 0$.
\end{lemma}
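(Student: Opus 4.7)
The plan is to verify the four listed properties of $\phi_{r,m}$ and $\psi_{r,m}$ by reducing everything to integrability of exponentials of $Z_{r,m}$, which was already established in Lemma \ref{lemma-dominating}.

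First I would settle well-definedness. Since $Z_{r,m} \geq 0$ and $\gamma \geq 0$, the integrand $e^{2(\gamma+Z_{r,m})t}$ is dominated on $[0,T]$ by $e^{2\gamma T} e^{2T Z_{r,m}}$, whose expectation is finite by Lemma \ref{lemma-dominating} (take $u = 2T$). For $\psi_{r,m}$, I would use the elementary bound
\begin{align*}
\varphi_{\gamma+Z_{r,m}}(t) = \int_0^t e^{(\gamma+Z_{r,m})(t-s)}\,ds \leq t\, e^{(\gamma+Z_{r,m})t},
\end{align*}
so $\varphi_{\gamma+Z_{r,m}}^2(t) \leq T^2 e^{2(\gamma+Z_{r,m})T}$, again integrable by Lemma \ref{lemma-dominating}. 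The values at zero then follow since $e^0 = 1$ and $\varphi_{\gamma+Z_{r,m}}(0) = 0$ pathwise.

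For monotonicity, because $\gamma + Z_{r,m} \geq 0$, the map $t \mapsto e^{2(\gamma+Z_{r,m})t}$ is pointwise nondecreasing on $[0,T]$, which yields monotonicity of $\phi_{r,m}$ after taking expectation and square root. For $\psi_{r,m}$ I would note that $\varphi_\lambda$ is increasing on $\bbr_+$ for every $\lambda \geq 0$ (both limits of integration and the nonnegative integrand push the integral upward as $t$ grows), so $\varphi_{\gamma+Z_{r,m}}^2(t)$ is pointwise nondecreasing in $t$, and the same passage through expectation and root gives monotonicity of $\psi_{r,m}$.

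Finally, continuity in both cases is a routine dominated convergence argument: for $t_n \to t$ in $[0,T]$, the integrands converge pointwise, and the same dominating bounds used for well-definedness (namely $e^{2(\gamma+Z_{r,m})T}$ for $\phi_{r,m}$ and $T^2 e^{2(\gamma+Z_{r,m})T}$ for $\psi_{r,m}$) apply uniformly in $n$. There is no genuine obstacle here; the only thing one must take care of is that Lemma \ref{lemma-dominating} gives integrability of $e^{uZ_{r,m}}$ for \emph{every} $u \in \bbr$, so the choice $u = 2T$ (and hence the common dominating function) is legitimate. The square root preserves continuity and monotonicity since it is continuous and increasing on $\bbr_+$, which finishes the proof.
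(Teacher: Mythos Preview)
Your proof is correct and follows essentially the same approach as the paper: bound the integrands uniformly in $t$ by an exponential in $Z_{r,m}$, invoke Lemma \ref{lemma-dominating} for integrability, and conclude via dominated convergence. The only cosmetic difference is that the paper bounds $\varphi_{\gamma+Z_{r,m}}(t)$ by $\varphi_{\gamma+Z_{r,m}}(T) \leq \frac{e^{\gamma T}}{\gamma} e^{Z_{r,m} T}$ rather than your $t\,e^{(\gamma+Z_{r,m})t}$; your bound is slightly more elementary (it does not require $\gamma>0$) and is more explicit about the monotonicity and the values at $t=0$, which the paper's version leaves implicit.
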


\begin{proof}
Note that
\begin{align*}
e^{Z_{r,m} t} \leq e^{Z_{r,m} T}, \quad t \in [0,T].
\end{align*}
Furthermore, taking into account Remark \ref{rem-varphi} we have
\begin{align*}
\varphi_{\gamma + Z_{r,m}}(t) \leq \varphi_{\gamma + Z_{r,m}}(T) = \frac{1}{\gamma + Z_{r,m}} \big( e^{(\gamma + Z_{r,m}) T} - 1 \big) \leq \frac{e^{\gamma T}}{\gamma} e^{Z_{r,m} T}, \quad t \in [0,T].
\end{align*}
Therefore, by Lemma \ref{lemma-dominating} and Lebesgue's dominated convergence theorem the statement follows.
\end{proof}

\begin{proposition}\label{prop-est-WZ}
Let $\epsilon \geq 0$ be such that the $\epsilon$-SSNC (\ref{SSNC}) is fulfilled. Then we have
\begin{align*}
\bbe \big[ d_K(\xi_{r,m}(t;x))^2 \big]^{1/2} \leq \phi_{r,m}(t) d_K(x) + \psi_{r,m}(t) \epsilon, \quad (t,x) \in [0,T] \times H.
\end{align*}
\end{proposition}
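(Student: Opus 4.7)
The plan is to apply Theorem \ref{thm-piecewise} pathwise to the deterministic PDE \eqref{WZ-PDE-intro} and then take the $L^2(\bbp)$-norm. Fix $\omega \in \Omega$ and write $a := \alpha - \rho$. Since $\dot{B}_m^j$ is constant on each interval $[k\delta_m,(k+1)\delta_m)$, equal to $\eta_m^{j,k+1}(\omega)$ in the notation of Lemma \ref{lemma-eta-iid}, the time-inhomogeneous coefficient
\begin{align*}
\tilde{\alpha}(t,h) := a(h) + \sum_{j=1}^r \sigma^j(h)\,\dot{B}_m^j(t)
\end{align*}
of \eqref{WZ-PDE-intro} is piecewise constant in $t$. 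Its Lipschitz constant in $h$ is bounded, uniformly in $t \in [0,T]$, by $L + L\sum_{j=1}^r Y_m^j(\omega) = L + Z_{r,m}(\omega)$, and it has the required linear growth in $h$ for each $\omega$. Thus for every fixed $\omega$ the PDE \eqref{WZ-PDE-intro} falls within the scope of Theorem \ref{thm-piecewise}, with Lipschitz constant $L + Z_{r,m}(\omega)$ and the pseudo-contraction constant $\beta$ from \eqref{pseudo-beta}.

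The key verification is hypothesis (ii) of Theorem \ref{thm-piecewise}: for each piece the constant coefficient $a_k(h) := a(h) + \sum_{j=1}^r \sigma^j(h)\,\eta_m^{j,k+1}(\omega)$ must satisfy the SNC at level $\epsilon$. I would set
\begin{align*}
u_k := \sum_{j=1}^r \eta_m^{j,k+1}(\omega)\, g_j,
\end{align*}
which lies in $G_r \subset U_0$, and observe that $\sigma(h)u_k = \sum_{j=1}^r \eta_m^{j,k+1}(\omega)\,\sigma^j(h)$, so
\begin{align*}
S_t h + t\, a_k(h) = S_t h + t\bigl(\alpha(h) - \rho(h) + \sigma(h)u_k\bigr).
\end{align*}
Hence the $\epsilon$-SSNC \eqref{SSNC}, applied to this particular $u_k \in U_0$, delivers exactly the required $\epsilon$-SNC for $a_k$. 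Theorem \ref{thm-piecewise} then yields, pathwise for each $\omega$,
\begin{align*}
d_K(\xi_{r,m}(t;x)(\omega)) \leq e^{(\gamma + Z_{r,m}(\omega))t}\,d_K(x) + \varphi_{\gamma + Z_{r,m}(\omega)}(t)\,\epsilon, \quad t \in [0,T],
\end{align*}
with $\gamma = \beta + L$.

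To conclude, I take $L^2(\bbp)$-norms on both sides. Since $d_K(x) \geq 0$ and $\epsilon \geq 0$, Minkowski's inequality gives
\begin{align*}
\bbe\bigl[d_K(\xi_{r,m}(t;x))^2\bigr]^{1/2} &\leq \bbe\bigl[e^{2(\gamma + Z_{r,m})t}\bigr]^{1/2} d_K(x) + \bbe\bigl[\varphi_{\gamma+Z_{r,m}}^2(t)\bigr]^{1/2} \epsilon \\
&= \phi_{r,m}(t)\,d_K(x) + \psi_{r,m}(t)\,\epsilon
\end{align*}
by the definitions \eqref{phi-def}--\eqref{psi-def}, and the two expectations are finite by Lemma \ref{lemma-dominating} (as used in Lemma \ref{lemma-linear-functions}).

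The only subtlety I foresee is bookkeeping: the Lipschitz constant $L + Z_{r,m}(\omega)$ fed into Theorem \ref{thm-piecewise} is random, so one must apply the theorem $\omega$-by-$\omega$ and then integrate. This is legitimate because the theorem is a purely deterministic statement and the bound $L + Z_{r,m}(\omega)$ is uniform in $t$ for each $\omega$; the measurability of $\xi_{r,m}(t;x)$ in $\omega$ (needed to take expectations) comes from the standard construction of the Wong-Zakai approximation, and the integrability of $e^{2(\gamma+Z_{r,m})T}$ and $\varphi_{\gamma+Z_{r,m}}^2(T)$ has already been established in Lemma \ref{lemma-dominating}. No further estimation is needed beyond identifying the right $u \in U_0$ on each piece.
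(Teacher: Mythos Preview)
Your proof is correct and follows essentially the same approach as the paper: apply Theorem \ref{thm-piecewise} pathwise to the Wong--Zakai PDE \eqref{WZ-PDE-intro}, verifying the piecewise $\epsilon$-SNC by specializing the $\epsilon$-SSNC \eqref{SSNC} to the finite-dimensional vectors $u_k \in U_0$, bounding the Lipschitz constant by $L + Z_{r,m}(\omega)$, and then taking the $L^2(\bbp)$-norm via Minkowski. The paper's argument is identical in structure, though it phrases the SSNC specialization slightly differently (as a condition indexed by $u \in \bbr^r$ rather than by explicitly constructing $u_k \in G_r$); your explicit identification of $u_k$ is a welcome clarification.
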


\begin{proof}
The $\epsilon$-SSNC (\ref{SSNC}) in particular implies
\begin{align}\label{SSNC-fin-dim}
\liminf_{t \downarrow 0} \frac{1}{t} d_K \bigg( S_t h + t \bigg( a(h) + \sum_{j=1}^r \sigma^j(h) u_j \bigg) \bigg) \leq \epsilon, \quad h \in K \text{ and } u \in \bbr^r.
\end{align}
Note that the PDE (\ref{WZ-PDE-intro-2}) can be expressed as
\begin{align*}
\left\{
\begin{array}{rcl}
\dot{\xi}_{r,m}(t) & = & A \xi_{r,m}(t) + b(t,\xi_{r,m}(t))
\\ \xi_{r,m}(0) & = & x,
\end{array}
\right.
\end{align*}
where the mapping $b : \Omega \times [0,T] \times H \to H$ is given by
\begin{align*}
b(t,h) := a(h) + \sum_{j=1}^r \sigma^j(h) \dot{B}_m^j(t).
\end{align*}
By (\ref{SSNC-fin-dim}) we obtain $\bbp$-almost surely
\begin{align*}
\liminf_{t \downarrow 0} \frac{1}{t} d_K ( S_t h + t b(s,h) ) \leq \epsilon, \quad (s,h) \in [0,T] \times K.
\end{align*}
Furthermore, using (\ref{Y-WZ})--(\ref{Z-WZ}), for all $h,g \in H$ we have $\bbp$-almost surely
\begin{align*}
\| b(t,h) - b(t,g) \| &\leq \| a(h) - a(g) \| + \sum_{j=1}^r \| \sigma^j(h) - \sigma^j(g) \| \, |\dot{B}_m^j(t)|
\\ &\leq L \| h-g \| + L \| h-g \| \sum_{j=1}^r Y_m^j = (L + Z_{r,m}) \| h-g \|.
\end{align*}
Hence, recalling that $\gamma = \beta + L$, by Theorem \ref{thm-piecewise} we obtain $\bbp$-almost surely
\begin{align*}
d_K(\xi_{r,m}(t;x)) \leq e^{(\gamma + Z_{r,m})t} d_K(x) + \varphi_{\gamma + Z_{r,m}}(t) \epsilon, \quad (t,x) \in [0,T] \times H.
\end{align*}
Consequently, taking the $L^2$-norm the stated result follows.
\end{proof}

Now, we are ready to provide the proof of Theorem \ref{thm-SPDE}.

\begin{proof}[Proof of Theorem \ref{thm-SPDE}]
Let $\delta > 0$ and $x_0 \in H$ be arbitrary. By (\ref{conv-1}) there exists an index $r \in \bbn$ such that
\begin{align}\label{index-for-r}
\bbe \big[ \| X(t;x_0) - X_r(t;x_0) \|^2 \big]^{1/2} \leq \frac{\delta}{4}, \quad t \in [0,T].
\end{align}
Furthermore, by (\ref{conv-2}) there exists an index $m \in \bbn$ such that
\begin{align}\label{index-for-m}
\bbe \big[ \| X_r(t;x_0) - \xi_{r,m}(t;x_0) \|^2 \big]^{1/2} \leq \frac{\delta}{4}, \quad t \in [0,T].
\end{align}
Now, we define the function $\Phi_{\delta}^{x_0} : \bbr^2 \times [0,T] \to \bbr_+$ as
\begin{align*}
\Phi_{\delta}^{x_0}(d,e,t) := \phi_{\delta}^{x_0}(t) d + \psi_{\delta}^{x_0}(t)e, \quad (d,e,t) \in \bbr^2 \times [0,T]
\end{align*}
where $\phi_{\delta}^{x_0}, \psi_{\delta}^{x_0} : [0,T] \to \bbr_+$ are given by
\begin{align}\label{phi-in-proof}
\phi_{\delta}^{x_0}(t) &:= \bbe \big[ e^{2(\gamma + Z_{r,m})t} \big]^{1/2}, \quad t \in [0,T],
\\ \label{psi-in-proof} \psi_{\delta}^{x_0}(t) &:= \bbe \big[ \varphi_{\gamma + Z_{r,m}}^2(t) \big]^{1/2}, \quad t \in [0,T].
\end{align}
Due to Lemma \ref{lemma-linear-functions}, the functions $\Phi_{\delta}^{x_0}$ and $\phi_{\delta}^{x_0}, \psi_{\delta}^{x_0}$ satisfy all the properties stated in Theorem \ref{thm-SPDE}. Now, we define the constants
\begin{align*}
C := \phi_{\delta}^{x_0}(T) \quad \text{and} \quad \eta_0 := \frac{\delta}{4C}.
\end{align*}
By (\ref{initial-cond-dist}) there exists $\eta > 0$ with $\eta \leq \eta_0$ such that
\begin{align*}
\bbe \big[ \| X(t;x) - X(t;x_0) \|^2 \big]^{1/2} \leq \frac{\delta}{4}, \quad (t,x) \in [0,T] \times U(x_0,\delta),
\end{align*}
where $U(x_0,\delta) \subset H$ denotes the open neighborhood
\begin{align*}
U(x_0,\delta) := \{ x \in H : \| x - x_0 \| < \eta \}.
\end{align*}
Now, let $\epsilon \geq 0$ be such that the $\epsilon$-SSNC (\ref{SSNC}) is fulfilled. By Proposition \ref{prop-est-WZ}, for all $(t,x) \in [0,T] \times U(x_0,\delta)$ we have
\begin{align*}
\bbe \big[ d_K(\xi_{r,m}(t;x_0))^2 \big]^{1/2} &\leq \Phi_{\delta}^{x_0}(d_K(x_0),\epsilon,t)
\\ &\leq \Phi_{\delta}^{x_0}(d_K(x),\epsilon,t) + | \Phi_{\delta}^{x_0}(d_K(x),\epsilon,t) - \Phi_{\delta}^{x_0}(d_K(x_0),\epsilon,t) |
\\ &\leq \Phi_{\delta}^{x_0}(d_K(x),\epsilon,t) + C | d_K(x) - d_K(x_0) |
\\ &\leq \Phi_{\delta}^{x_0}(d_K(x),\epsilon,t) + C \| x - x_0 \|
\\ &\leq \Phi_{\delta}^{x_0}(d_K(x),\epsilon,t) + \frac{\delta}{4}.
\end{align*}
Thus, using Lemma \ref{lemma-dist-x-y}, for all $(t,x) \in [0,T] \times U(x_0,\delta)$ we obtain
\begin{align*}
d_K(X(t;x)) &\leq \| X(t;x) - X(t;x_0) \| + \| X(t;x_0) - X_r(t;x_0) \|
\\ &\quad + \| X_r(t;x_0) - \xi_{r,m}(t;x_0) \| + d_K(\xi_{r,m}(t;x_0)),
\end{align*}
and hence, by Minkowski's inequality we get
\begin{align*}
\bbe \big[ d_K(X(t;x))^2 \big]^{1/2} &\leq \bbe \big[ \| X(t;x) - X(t;x_0) \|^2 \big]^{1/2} + \bbe \big[ \| X(t;x_0) - X_r(t;x_0) \|^2 \big]^{1/2}
\\ &\quad + \bbe \big[ \| X_r(t;x_0) - \xi_{r,m}(t;x_0) \|^2 \big]^{1/2} + \bbe \big[ d_K(\xi_{r,m}(t;x_0))^2 \big]^{1/2}
\\ &\leq \delta + \Phi_{\delta}^{x_0}(d_K(x),\epsilon,t).
\end{align*}
This completes the proof.
\end{proof}

\begin{remark}
Combining Lemmas \ref{lemma-eta-iid}--\ref{lemma-dominating} and their proofs we obtain explicit upper bounds for the functions $\phi_{\delta}^{x_0}$ and $\psi_{\delta}^{x_0}$ defined in (\ref{phi-in-proof}) and (\ref{psi-in-proof}), which depend on the indices $r = r(x_0,\delta)$ and $m = m(x_0,\delta,r)$ given according to (\ref{index-for-r}) and (\ref{index-for-m}). Generally, we do not have explicit formulas for $r = r(x_0,\delta)$ and $m = m(x_0,\delta,r)$, but at least we have the following statements:
\begin{itemize}
\item A careful inspection of the proof of \cite[Thm. 3.7]{Atma-book} shows that in our situation there is a constant $C_1 > 0$, which can be calculated explicitly, such that for each $r \in \bbn$ and each $t \in [0,T]$ we have
\begin{align*}
\bbe \big[ \| X(t;x_0) - X_r(t;x_0) \|^2 \big] &\leq C_1 \bbe \bigg[ \int_0^T \big( \| \sigma(X(s;x_0)) - \sigma_r(X(s;x_0)) \|_{L_2^0(H)}^2
\\ &\qquad \qquad \qquad + \| \alpha(X(s;x_0)) - \alpha_r(X(s;x_0)) \|^2 \big) ds \bigg].
\end{align*}
Recalling the definitions (\ref{alpha-r}) and (\ref{sigma-r}) of $\alpha_r$ and $\sigma_r$ we obtain 
\begin{align*}
\bbe \big[ \| X(t;x_0) - X_r(t;x_0) \|^2 \big] &\leq C_1 \bbe \Bigg[ \int_0^T \Bigg( \sum_{j > r} \| \sigma^j(X(s;x_0)) \|^2
\\ &\qquad + \bigg( \frac{1}{2} \sum_{j > r} \| D \sigma^j(X(s;x_0)) \sigma^j(X(s;x_0)) \| \bigg)^2 \Bigg) ds \Bigg].
\end{align*}
Hence, recalling definitions (\ref{kappa-1})--(\ref{kappa-3}) we have
\begin{align*}
\bbe \big[ \| X(t;x_0) - X_r(t;x_0) \|^2 \big] \leq C_1 T \Bigg( \sum_{j > r} \kappa_{j,1}^2 + \bigg( \frac{1}{2} \sum_{j > r} \kappa_{j,1} \kappa_{j,2} \bigg)^2 \Bigg).
\end{align*}
Now, let us additionally assume $\sum_{j=1}^{\infty} \kappa_{j,1}^2 < \infty$. Recalling that $\sum_{j=1}^{\infty} L_j < \infty$, where $(L_j)_{j \in \bbn}$ was defined in (\ref{L-j-def}), we also have $\sum_{j=1}^{\infty} \kappa_{j,2}^2 < \infty$. Hence, by the Cauchy-Schwarz inequality we obtain $\sum_{j=1}^{\infty} \kappa_{j,1} \kappa_{j,2} < \infty$, which implies that the series appearing in (\ref{rho}) is absolutely convergent for each $h \in H$, and that the convergence $\rho_r \to \rho$ is uniform. Defining the quantities
\begin{align*}
\Sigma^r := \sum_{j > r} \kappa_{j,1}^2 + \bigg( \frac{1}{2} \sum_{j > r} \kappa_{j,1} \kappa_{j,2} \bigg)^2, \quad r \in \bbn
\end{align*}
we have $\Sigma^r \to 0$ as $r \to \infty$. If the sequence $(\Sigma^r)_{r \in \bbn}$ can be computed explicitly, then we can also determine $r = r(\delta)$ satisfying (\ref{index-for-r}) explicitly. Note that in this case $r$ only depends on $\delta$, but not on $x_0$.

\item Under the additional conditions which we will impose in Section \ref{sec-subspace}, we can apply \cite[Thm. 1.3]{Nakayama-Tappe}, which in case $x_0 \in \cald(A)$ provides us with a convergence rate for the Wong-Zakai approximations. More precisely, for each $p > 1$ we obtain the existence of a constant $C_2(x_0,r) > 0$ such that for each $m \in \bbn$ we have
\begin{align*}
\bbe \big[ \| X_r(t;x_0) - \xi_{r,m}(t;x_0) \|^{2p} \big]^{\frac1{2p}}
\leq \frac{C_2(x_0,r)}{m^{\frac{p-1}{2p}}}, \quad t \in [0,T].
\end{align*}
Noting that 
$$
\left\{\frac{p}{p-1};\ p>1\right\}=(1,\infty), 
$$
together with H\"{o}lder's inequality, for each $q > 1$ we obtain the existence of a constant $C_2(x_0,r) > 0$ such that for each $m \in \bbn$ we have
\begin{align*}
&\bbe \big[ \| X_r(t;x_0) - \xi_{r,m}(t;x_0) \|^2 \big]^{1/2}
\leq \frac{C_2(x_0,r)}{m^{\frac{1}{2q}}}, \quad t \in [0,T].
\end{align*}
Consequently, choosing $m \in \bbn$ as the minimal positive integer such that
$$
m\geq\left(\frac{4C_2(x_0,r)}\delta\right)^{2q},
$$
we obtain an explicit expression for $m = m(x_0,\delta,r)$ such that (\ref{index-for-m}) is fulfilled.
\end{itemize}
\end{remark}

\section{Finite dimensional submanifolds with boundary}\label{sec-manifolds}

In this section we investigate when the $\epsilon$-SSNC (\ref{SSNC}) is satisfied in the situation where the closed set $K$ is a finite dimensional submanifold with boundary. The general mathematical framework is that of Section \ref{sec-SPDEs}. Let $\calm \subset H$ be a finite dimensional $C^1$-submanifold with boundary of $H$. More precisely, setting $m := \dim \calm$, for each $h \in \calm$ there are a subset $V \subset \bbr_+ \times \bbr^{m-1}$, which is open with respect to the relative topology, an open neighborhood $U \subset H$ of $h$, and a mapping $\phi \in C^1(V;H)$ with the following properties:
\begin{enumerate}
\item $\phi : V \to U \cap \calm$ is a homeomorphism.

\item $D \phi(y) \in L(\bbr^m,H)$ is one-to-one for all $y \in V$.
\end{enumerate}
Such a mapping $\phi$ is called a \emph{local parametrization} of $\calm$ around $h$. The boundary $\partial \calm$ is defined as the set of all points $h \in \calm$ such that $\phi^{-1}(h) \in \partial V$ for some local parametrization $\phi : V \to U \cap \calm$ around $h$, where $\partial V$ denotes the set of all points from $V$ with vanishing first coordinate; that is 
\begin{align*}
\partial V = \{ y \in V : y_1 = 0 \}.
\end{align*}
We refer to \cite[Sec. 3]{FTT-appendix} for more details about finite dimensional submanifolds with boundary. Concerning the submanifold $\calm$, we assume that it is closed as a subset of $H$.

\begin{proposition}\label{prop-manifold}
Let $\epsilon_1,\epsilon_2 \geq 0$ be arbitrary, and let $B : \calm \to H$ be a mapping. We assume that
\begin{align}\label{diff-semigroup}
&\liminf_{t \downarrow 0} \bigg\| \frac{S_t h - h}{t} - Bh \bigg\| \leq \epsilon_1,
\\ \label{manifold-alpha-b} &d_{T_h \calm} ( Bh + \alpha(h) - \rho(h) ) \leq \epsilon_2, \quad h \in \calm \setminus \partial \calm,
\\ \label{manifold-alpha-boundary-b} &d_{(T_h \calm)_+} ( Bh + \alpha(h) - \rho(h) ) \leq \epsilon_2, \quad h \in \partial \calm,
\\ \label{manifold-sigma} &\sigma^j(h) \in
\begin{cases}
T_h \calm, & h \in \calm \setminus \partial \calm,
\\ T_h \partial \calm, & h \in \partial \calm,
\end{cases}
\quad \text{for all $j \in \bbn$.}
\end{align}
Then the $\epsilon$-SSNC (\ref{SSNC}) is satisfied with $K := \calm$ and $\epsilon := \epsilon_1 + \epsilon_2$.
\end{proposition}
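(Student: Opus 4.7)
My plan is to reduce the $\epsilon$-SSNC at a point $h \in \calm$ to an estimate of the form $d_\calm(h + tv) = o(t) + t\epsilon_2$ for a suitable tangential direction $v$, using condition (\ref{diff-semigroup}) to replace $S_t h$ by $h + tBh$. Concretely, for fixed $h \in \calm$ and $u \in U_0$, I would first apply Lemma \ref{lemma-dist-x-y} to write
\[
d_\calm\bigl(S_t h + t(\alpha(h) - \rho(h) + \sigma(h)u)\bigr) \leq d_\calm\bigl(h + t(Bh + \alpha(h) - \rho(h) + \sigma(h)u)\bigr) + \Bigl\|S_t h - h - tBh\Bigr\|,
\]
divide by $t$, and pass to the $\liminf$ along a sequence $t_n \downarrow 0$ realising (\ref{diff-semigroup}). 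The second term then contributes at most $\epsilon_1$, and it remains to bound the $\limsup$ of the first term by $\epsilon_2$.

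To analyse the first term I would first observe that $\sigma(h)u$ lies in the appropriate finite-dimensional linear space attached to $h$: since $\sigma(h) u = \sum_{j \in \bbn} \langle u, g_j \rangle_{U_0}\, \sigma^j(h)$ converges in $H$, condition (\ref{manifold-sigma}) combined with the closedness of $T_h\calm$ (interior case) or $T_h \partial \calm$ (boundary case) forces $\sigma(h)u$ to belong to that space. Next, by (\ref{manifold-alpha-b})--(\ref{manifold-alpha-boundary-b}) and the closedness of the finite-dimensional targets, I can pick $w \in T_h\calm$ respectively $w \in (T_h\calm)_+$ with $\|Bh + \alpha(h) - \rho(h) - w\| \leq \epsilon_2$. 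The key algebraic point is that $w + \sigma(h)u$ is admissible in both cases: in the interior case both summands belong to the subspace $T_h \calm$; on the boundary, $T_h \partial\calm$ is a linear subspace \emph{contained in} the closed convex cone $(T_h\calm)_+$, and convex cones are closed under addition, so $w + \sigma(h)u \in (T_h\calm)_+$.

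Now I would pick a local parametrisation $\phi\colon V \to U \cap \calm$ with $\phi(y) = h$ and write $w + \sigma(h)u = D\phi(y)\xi$; in the boundary case the admissibility above gives $\xi_1 \geq 0$, while $y_1 = 0$, so $y + t\xi \in \bbr_+ \times \bbr^{m-1}$ for all $t \geq 0$ and thus $y + t\xi \in V$ for $t$ small by openness of $V$ in the relative topology; the interior case is even easier. Differentiability of $\phi$ then yields $\phi(y + t\xi) = h + t(w + \sigma(h)u) + o(t)$, hence
\[
d_\calm\bigl(h + t(w + \sigma(h)u)\bigr) \leq \bigl\|\phi(y + t\xi) - h - t(w + \sigma(h)u)\bigr\| = o(t).
\]
One more application of Lemma \ref{lemma-dist-x-y} absorbs the $\epsilon_2$-error between $w$ and $Bh + \alpha(h) - \rho(h)$, giving $\limsup_{t \downarrow 0}\frac{1}{t} d_\calm(h + t(Bh + \alpha(h) - \rho(h) + \sigma(h)u)) \leq \epsilon_2$. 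Combining this with the first step finishes the proof with constant $\epsilon_1 + \epsilon_2$.

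The main obstacle I anticipate is the boundary case: since the coefficients $\langle u, g_j\rangle_{U_0}$ may have arbitrary sign, the sum $\sigma(h)u$ cannot be expected to lie in the one-sided cone $(T_h\calm)_+$ unless each $\sigma^j(h)$ lies in a linear subspace. Condition (\ref{manifold-sigma}) is tailored precisely for this; together with the cone-plus-subspace identity $(T_h\calm)_+ + T_h\partial\calm = (T_h\calm)_+$ it is exactly what makes the parametrisation argument go through uniformly on $\calm$.
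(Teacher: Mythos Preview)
Your proposal is correct and follows essentially the same route as the paper's proof: both separate off the error $\|S_t h - h - t Bh\|/t$ via Lemma \ref{lemma-dist-x-y}, replace the drift by an $\epsilon_2$-close tangential vector, add the tangential $\sigma(h)u$, and then use a local parametrisation together with $C^1$-differentiability to show that the remaining distance is $o(t)$. The only cosmetic difference is that the paper represents the drift and noise parts separately as $D\phi(y)v$ and $D\phi(y)w$ and chains all three inequalities at once, whereas you represent the sum directly and argue in stages; your handling of the boundary case via $(T_h\calm)_+ + T_h\partial\calm \subset (T_h\calm)_+$ is exactly what the paper encodes through the conditions $v_1 \geq 0$ and $w_1 = 0$.
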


\begin{proof}
Let $h \in \calm$ and $u \in U_0$ be arbitrary. We define
\begin{align*}
x := Bh + \alpha(h) - \rho(h) \quad \text{and} \quad \eta := \sigma(h)u.
\end{align*}
Then we have
\begin{align*}
\eta = \sigma(h) u = \sigma(h) \sum_{j \in \bbn} \langle u,g_j \rangle_{U_0} g_j = \sum_{j \in \bbn} \langle u,g_j \rangle_{U_0} \sigma^j(h).
\end{align*}
Therefore, and since $T_h \calm$ and $T_h \partial \calm$ are closed subspaces of $H$, by (\ref{manifold-sigma}) we obtain
\begin{align}\label{manifold-sigma-2}
\eta \in
\begin{cases}
T_h \calm, & h \in \calm \setminus \partial \calm,
\\ T_h \partial \calm, & h \in \partial \calm.
\end{cases}
\end{align}
Let $\phi : V \to U \cap \calm$ be a local parametrization of $\calm$ around $h$, and set $y := \phi^{-1}(h) \in V$. If $h \in \partial \calm$, then we have $y \in \partial V$, which just means that $y_1 = 0$. Noting that $T_h \calm$ and $(T_h \calm)_+$ are closed, convex subsets of $H$, by (\ref{manifold-alpha-b}) and (\ref{manifold-alpha-boundary-b}) there exists $\xi \in T_h \calm$ with $\| x - \xi \| \leq \epsilon_2$, and in case $h \in \partial \calm$ we even have $\xi \in (T_h \calm)_+$. There exists $v \in \bbr^m$ such that $D \phi(y)v = \xi$, and if $h \in \partial \calm$, then we even have $v_1 \geq 0$.
Furthermore, by (\ref{manifold-sigma-2}) there exists $w \in \bbr^m$ such that $D \phi(y)w = \eta$, and if $h \in \partial \calm$, then we even have $w_1 = 0$.
Consequently, there exists $\delta > 0$ such that 
\begin{align*}
y + t(v+w) \in V \quad \text{for all $t \in [0,\delta)$.} 
\end{align*}
Using Lemma \ref{lemma-dist-x-y}, for all $t \in (0,\delta)$ we obtain
\begin{align*}
&\frac{1}{t} d_{\calm} \big( S_t h + t ( \alpha(h) - \rho(h) + \sigma(h) u ) \big)
\\ &= \frac{1}{t} d_{\calm} \big( (S_{t} h - h - t Bh ) + (h + t (x + \eta)) \big)
\\ &\leq \bigg\| \frac{S_{t} h - h}{t} - Bh \bigg\| + \frac{1}{t} d_{\calm} \big( h + t (\xi + \eta) + t (x - \xi) \big)
\\ &\leq \bigg\| \frac{S_{t} h - h}{t} - Bh \bigg\| + \| x - \xi \| + \frac{1}{t} d_{\calm} \big( \phi(y) + t (\xi + \eta) \big)
\\ &\leq \bigg\| \frac{S_{t} h - h}{t} - Bh \bigg\| + \| x - \xi \| + \bigg\| \frac{\phi(y + t(v+w)) - \phi(y)}{t} - (\xi + \eta) \bigg\|.
\end{align*}
Using (\ref{diff-semigroup}), and noting that $\| x - \xi \| \leq \epsilon_2$ and $D \phi(y) (v+w) = \xi + \eta$, this gives us
\begin{align*}
\liminf_{t \downarrow 0} \frac{1}{t} d_{\calm} \big( S_t h + t ( \alpha(h) - \rho(h) + \sigma(h) u ) \big) \leq \epsilon_1 + \epsilon_2 = \epsilon,
\end{align*}
showing that the $\epsilon$-SSNC (\ref{SSNC}) is fulfilled.
\end{proof}

\begin{corollary}\label{cor-M-in-A}
Let $\epsilon \geq 0$ be arbitrary. Suppose that $\calm \subset \cald(A)$ and
\begin{align}\label{manifold-drift-1}
&d_{T_h \calm} ( Ah + \alpha(h) - \rho(h) ) \leq \epsilon, \quad h \in \calm \setminus \partial \calm,
\\ \label{manifold-drift-2} &d_{(T_h \calm)_+} ( Ah + \alpha(h) - \rho(h) ) \leq \epsilon, \quad h \in \partial \calm,
\end{align}
as well as (\ref{manifold-sigma}). Then the $\epsilon$-SSNC (\ref{SSNC}) is satisfied with $K := \calm$.
\end{corollary}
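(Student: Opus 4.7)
The plan is to apply Proposition \ref{prop-manifold} directly with the choice $B := A|_{\calm}$, exploiting the hypothesis $\calm \subset \cald(A)$ to absorb the generator term into the drift condition.

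First, I would verify condition (\ref{diff-semigroup}) with $\epsilon_1 = 0$. Since $\calm \subset \cald(A)$, for every $h \in \calm$ the standard characterization of $\cald(A)$ gives
\begin{align*}
\lim_{t \downarrow 0} \bigg\| \frac{S_t h - h}{t} - A h \bigg\| = 0,
\end{align*}
so in particular the $\liminf$ in (\ref{diff-semigroup}) equals zero, and we may take $\epsilon_1 = 0$ and $Bh := Ah$.

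Next, with this choice of $B$, conditions (\ref{manifold-alpha-b}) and (\ref{manifold-alpha-boundary-b}) of Proposition \ref{prop-manifold} read
\begin{align*}
d_{T_h \calm}(Ah + \alpha(h) - \rho(h)) &\leq \epsilon_2, \quad h \in \calm \setminus \partial \calm,
\\
d_{(T_h \calm)_+}(Ah + \alpha(h) - \rho(h)) &\leq \epsilon_2, \quad h \in \partial \calm,
\end{align*}
which are precisely the hypotheses (\ref{manifold-drift-1}) and (\ref{manifold-drift-2}) with $\epsilon_2 := \epsilon$. Condition (\ref{manifold-sigma}) of Proposition \ref{prop-manifold} is assumed outright. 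Hence Proposition \ref{prop-manifold} yields the $\epsilon$-SSNC with $K := \calm$ and total error $\epsilon_1 + \epsilon_2 = 0 + \epsilon = \epsilon$.

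There is essentially no obstacle here; the corollary is a direct specialization of Proposition \ref{prop-manifold} once one recognizes that the assumption $\calm \subset \cald(A)$ allows $A$ itself to play the role of the auxiliary map $B$ approximating the semigroup derivative. The only thing worth checking is the compatibility of the $\epsilon$-bookkeeping, which works out cleanly since the $\epsilon_1$-contribution vanishes.
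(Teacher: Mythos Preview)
Your proof is correct and matches the paper's approach exactly: the paper simply states that the corollary is an immediate consequence of Proposition~\ref{prop-manifold}, and your argument spells out precisely how, taking $B := A|_{\calm}$, $\epsilon_1 = 0$, and $\epsilon_2 = \epsilon$.
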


\begin{proof}
This is an immediate consequence of Proposition \ref{prop-manifold}.
\end{proof}

\begin{remark}
Suppose that $\calm \subset \cald(A)$. In case $\epsilon = 0$ conditions (\ref{manifold-drift-1}) and (\ref{manifold-drift-2}) just mean
\begin{align}\label{manifold-drift-inv}
Ah + \alpha(h) - \rho(h) \in
\begin{cases}
T_h \calm, & h \in \calm \setminus \partial \calm,
\\ (T_h \calm)_+, & h \in \partial \calm.
\end{cases}
\end{align}
It is well-known that the conditions $\calm \subset \cald(A)$ and (\ref{manifold-sigma}), (\ref{manifold-drift-inv}) are necessary and sufficient for invariance of the submanifold $\calm$ for the SPDE (\ref{SPDE}); see, for example \cite{FTT-manifolds}.
\end{remark}

\begin{lemma}\label{lemma-dist-subspace}
Let $K \subset H$ be a closed subspace. Then we have $d_K(x) = d_K(x+y)$ for all $x \in H$ and $y \in K$.
\end{lemma}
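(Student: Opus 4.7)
The plan is to observe that when $K$ is a closed subspace and $y \in K$, the translation map $z \mapsto z + y$ is a bijection from $K$ onto itself, and then to rewrite the infimum defining $d_K(x+y)$ via this change of variables.

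More explicitly, I would first note that since $K$ is a subspace and $y \in K$, we have the set identity $K + y = K$: the inclusion $K + y \subseteq K$ follows from closure of $K$ under addition, and the reverse inclusion holds because any $w \in K$ can be written as $w = (w - y) + y$ with $w - y \in K$. Then I would compute
\begin{align*}
d_K(x + y) = \inf_{z \in K} \| (x+y) - z \| = \inf_{w \in K} \| x - w \|,
\end{align*}
where in the second equality I substitute $w := z - y$, which ranges over $K$ as $z$ ranges over $K$ precisely because of the identity $K - y = K$. The right-hand side is $d_K(x)$, which gives the claim.

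There is no real obstacle here; the proof is a one-line substitution argument that only uses that $K$ is closed under subtraction (so in particular no closedness or completeness is invoked, only the subspace structure). The closedness hypothesis on $K$ is needed elsewhere for the distance function to be sensible in the paper's framework but is not used in this particular identity.
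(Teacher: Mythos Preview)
Your proof is correct and follows essentially the same approach as the paper: both arguments use that the translation $z \mapsto z - y$ is a bijection of $K$ onto itself and then perform a change of variable in the infimum defining $d_K$. Your remark that closedness of $K$ is not actually used in this identity is accurate (and the paper's phrase ``linear isomorphism'' for the affine map $z \mapsto z - y$ is a slight misnomer, though the intended bijection argument is the same as yours).
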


\begin{proof}
Let $x \in H$ and $y \in K$ be arbitrary. The mapping $T \in L(K)$ given by $Tz = z-y$ for all $z \in K$ is a linear isomorphism. Therefore, we obtain
\begin{align*}
d_K(x) = \inf_{z \in K} \| x-z \| = \inf_{z \in K} \| x-(z-y) \| = \inf_{z \in K} \| (x+y)-z \| = d_K(x+y),
\end{align*}
which completes the proof.
\end{proof}

\begin{corollary}\label{cor-subspace}
Let $K$ be a finite dimensional subspace such that $K \subset \cald(A)$, and let $\epsilon \geq 0$ be arbitrary. We assume that
\begin{align}\label{drift-K}
&d_K ( Ah + \alpha(h) ) \leq \epsilon, \quad h \in K,
\\ \label{sigma-in-K} &\sigma^j(h) \in K, \quad h \in K, \quad \text{for all $j \in \bbn$.}
\end{align}
Then the $\epsilon$-SSNC (\ref{SSNC}) is satisfied.
\end{corollary}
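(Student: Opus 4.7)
The plan is to deduce this from Corollary \ref{cor-M-in-A} applied with $\calm := K$. Since $K$ is a finite-dimensional subspace of $H$, it is a closed submanifold without boundary, so $\partial \calm = \emptyset$ and the tangent space equals the subspace itself: $T_h \calm = K$ for every $h \in K$. In particular, condition (\ref{manifold-drift-2}) is vacuous, and condition (\ref{manifold-sigma}) reduces precisely to the hypothesis (\ref{sigma-in-K}). The only substantive verification is (\ref{manifold-drift-1}), which in this setting reads $d_K(Ah + \alpha(h) - \rho(h)) \leq \epsilon$ for all $h \in K$.

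The key intermediate step I would carry out is to show that $\rho(h) \in K$ for every $h \in K$. For this, fix $h \in K$ and $j \in \bbn$. Since $\sigma^j(K) \subset K$ by (\ref{sigma-in-K}) and $K$ is a subspace, for every $v \in K$ the curve $t \mapsto h + tv$ stays in $K$, so $\sigma^j(h + tv) - \sigma^j(h) \in K$ for small $t$; dividing by $t$ and letting $t \downarrow 0$, together with the closedness of $K$, yields $D \sigma^j(h) v \in K$. Applying this with $v = \sigma^j(h) \in K$ gives $D \sigma^j(h) \sigma^j(h) \in K$. Because the defining series for $\rho(h)$ is absolutely convergent and $K$ is closed, we conclude $\rho(h) \in K$.

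Given this, the verification of (\ref{manifold-drift-1}) becomes routine. Applying Lemma \ref{lemma-dist-subspace} with the subspace $K$ and $y := -\rho(h) \in K$, we have
\begin{align*}
d_K(Ah + \alpha(h) - \rho(h)) = d_K(Ah + \alpha(h)) \leq \epsilon,
\end{align*}
where the inequality is precisely hypothesis (\ref{drift-K}). Combined with (\ref{sigma-in-K}) and $K \subset \cald(A)$, Corollary \ref{cor-M-in-A} now delivers the $\epsilon$-SSNC.

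The main (very mild) obstacle is the stability of $K$ under $\rho$, which has to be handled carefully because $\rho$ involves the Fréchet derivatives $D\sigma^j$ evaluated at points of $K$ rather than just the values $\sigma^j(h)$; invariance of $K$ under $\sigma^j$ does not by itself make $D\sigma^j(h)$ map all of $H$ into $K$, but it does suffice for the directions $v \in K$, which is exactly what is needed here.
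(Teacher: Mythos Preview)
Your proof is correct and follows essentially the same route as the paper: set $\calm := K$, observe that $K$ is a closed $C^1$-submanifold without boundary with $T_h \calm = K$, note that (\ref{sigma-in-K}) forces $\rho(h) \in K$, then use Lemma \ref{lemma-dist-subspace} to reduce (\ref{manifold-drift-1}) to (\ref{drift-K}) and invoke Corollary \ref{cor-M-in-A}. The paper merely asserts that (\ref{sigma-in-K}) implies $\rho(h) \in K$ without justification, so your directional-derivative argument is in fact a welcome elaboration of the one step the paper leaves implicit.
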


\begin{proof}
The subspace $\calm := K$ is a finite dimensional $C^1$-submanifold of $H$, which is closed as a subset of $H$. Furthermore, we have $T_h \calm = K$ for each $h \in K$, and the boundary of $K$ is empty. Therefore, condition (\ref{manifold-sigma}) is equivalent to (\ref{sigma-in-K}). Note that condition (\ref{sigma-in-K}) implies $\rho(h) \in K$ for all $h \in K$. Therefore, by Lemma \ref{lemma-dist-subspace} conditions (\ref{manifold-drift-1}) and (\ref{manifold-drift-2}) are equivalent to (\ref{drift-K}). Consequently, applying Corollary \ref{cor-M-in-A} concludes the proof. 
\end{proof}

\begin{corollary}\label{cor-subspace-2}
Let $K$ be a finite dimensional subspace such that $K \subset \cald(A)$, and let $\epsilon \geq 0$ be arbitrary. We assume that
\begin{align*}
&Ah \in K, \quad h \in K,
\\ &d_K ( \alpha(h) ) \leq \epsilon, \quad h \in K,
\\ &\sigma^j(h) \in K, \quad h \in K, \quad \text{for all $j \in \bbn$.}
\end{align*}
Then the $\epsilon$-SSNC (\ref{SSNC}) is satisfied.
\end{corollary}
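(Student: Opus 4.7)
The plan is to derive this as an immediate consequence of Corollary \ref{cor-subspace}. That corollary already handles the case of a finite dimensional invariant subspace $K \subset \cald(A)$ and requires two hypotheses: the distance estimate $d_K(Ah + \alpha(h)) \leq \epsilon$ for all $h \in K$, and the condition $\sigma^j(h) \in K$ for all $h \in K$ and $j \in \bbn$. The second hypothesis is literally one of our assumptions, so only the first needs to be verified.

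To verify the distance estimate, I would use the invariance $Ah \in K$ together with Lemma \ref{lemma-dist-subspace}. Since $K$ is a closed subspace and $Ah \in K$ for each $h \in K$, Lemma \ref{lemma-dist-subspace} yields
\begin{align*}
d_K(Ah + \alpha(h)) = d_K(\alpha(h)) \leq \epsilon, \quad h \in K,
\end{align*}
where the inequality is precisely our second assumption. Thus both hypotheses of Corollary \ref{cor-subspace} are in force, and applying that corollary gives the $\epsilon$-SSNC (\ref{SSNC}).

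There is no real obstacle here; the argument is a one-step reduction that decomposes the combined drift condition of Corollary \ref{cor-subspace} into a structural assumption on $A$ (namely $K$-invariance) and a separate distance bound on $\alpha$. This separation is useful in applications where $A$ naturally preserves a finite dimensional space (for instance when $K$ is spanned by eigenfunctions of $A$) while the nonlinear drift $\alpha$ may only approximately respect $K$.
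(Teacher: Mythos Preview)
Your proof is correct and follows exactly the paper's approach: the paper's proof reads ``This is an immediate consequence of Lemma \ref{lemma-dist-subspace} and Corollary \ref{cor-subspace},'' which is precisely the reduction you carry out. Your use of Lemma \ref{lemma-dist-subspace} to convert $d_K(Ah+\alpha(h))$ into $d_K(\alpha(h))$ via the $A$-invariance of $K$ is the intended step.
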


\begin{proof}
This is an immediate consequence of Lemma \ref{lemma-dist-subspace} and Corollary \ref{cor-subspace}.
\end{proof}

\section{Finite dimensional subspaces}\label{sec-subspace}

In this section we consider the situation where the submanifold is a finite dimensional subspace. We will derive dynamics on the subspace, which are close to the original dynamics and which can be described by a finite dimensional state process. Also in this section the general mathematical framework is that of Section \ref{sec-SPDEs}. Note that the domain $\cald(A)$ equipped with the graph norm
\begin{align*}
\| h \|_{\cald(A)} := \sqrt{ \| h \|^2 + \| Ah \|^2 }, \quad h \in \cald(A),
\end{align*}
where $\| \cdot \|$ denotes the norm on the Hilbert space $H$, is also a separable Hilbert space. We assume that $\alpha(\cald(A)) \subset \cald(A)$ and $\sigma(\cald(A)) \subset L_2^0(\cald(A))$, and that the restricted mappings $\alpha|_{\cald(A)}$ and $\sigma|_{\cald(A)}$ satisfy the same conditions as stated for $\alpha$ and $\sigma$ in Section \ref{sec-SPDEs}.

\begin{proposition}\label{prop-sol-DA}
For each $x \in \cald(A)$ there exists a unique mild solution $X$ to the SPDE (\ref{SPDE}) with $X(0) = x$ on the state space $(\cald(A),\| \cdot \|_{\cald(A)})$, and it is a strong solution to the SPDE (\ref{SPDE}) with $X(0) = x$ on the state space $(H,\| \cdot \|)$.
\end{proposition}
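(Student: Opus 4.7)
The plan is to first solve the SPDE on the smaller Hilbert space $(\cald(A), \|\cdot\|_{\cald(A)})$ using the standard Lipschitz fixed-point theory, and then use the extra regularity this provides to upgrade the mild solution on $H$ to a strong solution.

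First I would check that the restriction of $(S_t)_{t\geq 0}$ to $\cald(A)$ is a $C_0$-semigroup on $(\cald(A), \|\cdot\|_{\cald(A)})$. This uses the invariance $S_t \cald(A) \subset \cald(A)$ and the commutation $A S_t h = S_t A h$ for $h \in \cald(A)$, which yield
\[
\| S_t h \|_{\cald(A)}^2 = \| S_t h \|^2 + \| S_t A h \|^2 \leq M^2 e^{2\beta t} \| h \|_{\cald(A)}^2,
\]
together with strong continuity at $t=0$ in the graph norm, which follows from the strong continuity of $(S_t)$ applied to both $h$ and $Ah$. The restricted semigroup is generated by the part of $A$ in $\cald(A)$.

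Next, by the standing hypotheses, the restrictions $\alpha|_{\cald(A)}$, $\sigma|_{\cald(A)}$ and $\rho|_{\cald(A)}$ map into $\cald(A)$ (resp.\ $L_2^0(\cald(A))$) and are Lipschitz with respect to the graph norm. Hence the standard existence and uniqueness theorem for mild solutions of semilinear SPDEs with Lipschitz coefficients (e.g.\ \cite[Thm.\ 7.2]{Da_Prato}) applies on the separable Hilbert space $(\cald(A), \|\cdot\|_{\cald(A)})$ and yields a unique continuous adapted $\cald(A)$-valued process $X$ with $X(0)=x$ satisfying, $\bbp$-a.s.,
\[
X(t) = S_t x + \int_0^t S_{t-s}\alpha(X(s))\,ds + \int_0^t S_{t-s}\sigma(X(s))\,dW(s), \quad t\in[0,T],
\]
where the integrals are understood in $\cald(A)$. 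Since the embedding $(\cald(A), \|\cdot\|_{\cald(A)}) \hookrightarrow (H, \|\cdot\|)$ is continuous and the restricted semigroup coincides with $(S_t)$, this identity also holds in $H$. By the Lipschitz uniqueness on $H$, $X$ coincides with the unique mild solution on $H$ starting from $x$.

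Finally I would upgrade $X$ to a strong solution on $(H,\|\cdot\|)$. Because $X$ is $\cald(A)$-valued with $t \mapsto X(t)$ continuous in the graph norm, the processes $s\mapsto A\alpha(X(s))$ and $s\mapsto A\sigma(X(s))$ are continuous and therefore the Bochner integral $\int_0^t \|AX(s)\|\,ds$ is a.s.\ finite and the stochastic integral of $A\sigma(X(\cdot))$ is well-defined. This regularity allows the interchange of $A$ with the deterministic and stochastic convolution integrals (via the identities $A\int_0^t S_{t-s} h\,ds = S_t h - h$ for $h\in H$ and its stochastic analogue from \cite[Prop.\ 4.15]{Da_Prato}, together with $AS_{t-s}h = S_{t-s}Ah$ for $h\in\cald(A)$), converting the mild formulation into
\[
X(t) = x + \int_0^t \big(AX(s) + \alpha(X(s))\big)\,ds + \int_0^t \sigma(X(s))\,dW(s),
\]
which is exactly the strong form. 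The main delicacy here is justifying the commutation of $A$ with the stochastic integral, which will rely on the assumption $\sigma|_{\cald(A)} : \cald(A) \to L_2^0(\cald(A))$ so that $A\sigma(X(\cdot))$ defines a legitimate $L_2^0(H)$-valued integrand; this is the one place where the graph-norm Lipschitz hypotheses are essential rather than purely convenient.
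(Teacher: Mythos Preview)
Your proof sketch is correct and follows essentially the same route as the paper: the paper's proof simply reads ``The proof is analogous to \cite[Prop.~2.8]{Nakayama-Tappe}, where this result is proven with a finite dimensional driving Wiener process,'' and what you have written is precisely the standard argument underlying that cited result---solve the SPDE on $(\cald(A),\|\cdot\|_{\cald(A)})$ via the Lipschitz hypotheses on the restricted coefficients, identify this with the $H$-mild solution by continuous embedding and uniqueness, and then use the $\cald(A)$-regularity to pass from mild to strong on $H$.
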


\begin{proof}
The proof is analogous to \cite[Prop. 2.8]{Nakayama-Tappe}, where this result is proven with a finite dimensional driving Wiener process.
\end{proof}

Now, let $K \subset \cald(A)$ be a finite dimensional subspace. We denote by $d_K^{\cald(A)} : \cald(A) \to \bbr_+$ the distance function with respect to the graph norm; that is
\begin{align*}
d_K^{\cald(A)}(x) = \inf_{y \in K} \| x-y \|_{\cald(A)}, \quad x \in \cald(A).
\end{align*}
For each $x \in \cald(A)$ we set $P(\,\cdot\,;x) := \pi_K^{\cald(A)} X(\,\cdot\,;x)$, where $\pi_K^{\cald(A)} : \cald(A) \to K$ denotes the orthogonal projection on $K$ in the Hilbert space $(\cald(A),\| \cdot \|_{\cald(A)})$.

\begin{lemma}\label{lemma-C3}
Let $\epsilon > 0$ be arbitrary. We assume that $K \subset \cald(A^2)$ and
\begin{align}\label{DA-1}
&d_K^{\cald(A)}(Ah + \alpha(h)) \leq \epsilon, \quad h \in K,
\\ \label{DA-2} &\sigma^j(h) \in K, \quad h \in K, \quad \text{for all $j \in \bbn$.}
\end{align}
Then for all $x_0 \in \cald(A)$ and $\delta > 0$ there exist a time horizon $S \in (0,T]$ and an open neighborhood $U(x_0,\delta) \subset \cald(A)$ of $x_0$ in the Hilbert space $(\cald(A),\| \cdot \|_{\cald(A)})$ such that
\begin{align*}
\bbe \big[ \| X(t;x) - P(t;x) \|_{\cald(A)}^2 \big]^{1/2} \leq \delta, \quad (t,x) \in [0,S] \times K \cap U(x_0,\delta).
\end{align*}
\end{lemma}

\begin{proof}
Noting (\ref{DA-1}) and (\ref{DA-2}), by Corollary \ref{cor-subspace} the $\epsilon$-SSNC (\ref{SSNC}) is fulfilled on the state space $(\cald(A),\| \cdot \|_{\cald(A)})$. Thus, by Corollary \ref{cor-d-zero} there exist a time horizon $S \in (0,T]$ and an open neighborhood $U(x_0,\delta) \subset \cald(A)$ of $x_0$ in $(\cald(A),\| \cdot \|_{\cald(A)})$ such that
\begin{align*}
\bbe \big[ d_K^{\cald(A)}(X(t;x))^2 \big]^{1/2} \leq \delta, \quad (t,x) \in [0,S] \times K \cap U(x_0,\delta).
\end{align*}
Hence, for all $(t,x) \in [0,S] \times K \cap U(x_0,\delta)$ we obtain
\begin{align*}
\bbe \big[ \| X(t;x) - P(t;x) \|_{\cald(A)}^2 \big]^{1/2} &= \bbe \big[ \| X(t;x) - \pi_K^{\cald(A)} X(t;x) \|_{\cald(A)}^2 \big]^{1/2}
\\ &= \bbe \big[ d_K^{\cald(A)}(X(t;x))^2 \big]^{1/2} \leq \delta,
\end{align*}
completing the proof.
\end{proof}

Recall that $\| \cdot \|$ denotes the norm on the Hilbert space $H$.

\begin{lemma}\label{lemma-projection-extends}
Suppose that $AK \subset \cald(A^*)$. Then $\pi_K^{\cald(A)}$ extends to a continuous linear operator $\pi_K^{\cald(A)} \in L(H,K)$ in the Hilbert space $(H,\| \cdot \|)$.
\end{lemma}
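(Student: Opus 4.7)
The plan is to exploit the finite dimensionality of $K$ by writing the projection explicitly in an orthonormal basis and then transferring the $A$-derivative to the other side of the inner product using the adjoint hypothesis. First I would choose an orthonormal basis $\{e_1,\dots,e_n\}$ of $K$ with respect to the graph inner product
\begin{equation*}
\langle f,g\rangle_{\cald(A)} = \langle f,g\rangle + \langle Af,Ag\rangle, \quad f,g \in \cald(A),
\end{equation*}
which exists because $K$ is a finite dimensional subspace of $\cald(A)$. Then for every $h \in \cald(A)$ the projection is given by
\begin{equation*}
\pi_K^{\cald(A)} h = \sum_{i=1}^n \bigl(\langle h,e_i\rangle + \langle Ah,Ae_i\rangle\bigr) e_i.
\end{equation*}

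The crucial step is to observe that each of the linear functionals $h \mapsto \langle Ah,Ae_i\rangle$, which is a priori only defined and continuous on $(\cald(A),\|\cdot\|_{\cald(A)})$, actually extends continuously to $(H,\|\cdot\|)$. This is exactly where the assumption $AK \subset \cald(A^*)$ enters: since $Ae_i \in \cald(A^*)$ for every $i$, the definition of the adjoint yields
\begin{equation*}
\langle Ah,Ae_i\rangle = \langle h,A^*Ae_i\rangle \quad \text{for all $h \in \cald(A)$,}
\end{equation*}
and the right-hand side is a continuous linear functional on all of $H$ because $A^*Ae_i \in H$.

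Next I would simply take this as the definition of the extension, setting
\begin{equation*}
\pi_K^{\cald(A)} h := \sum_{i=1}^n \bigl(\langle h,e_i\rangle + \langle h,A^*Ae_i\rangle\bigr) e_i, \quad h \in H.
\end{equation*}
This map is clearly linear, has image contained in $K$, and satisfies the norm estimate $\|\pi_K^{\cald(A)} h\| \leq \bigl(\sum_{i=1}^n (\|e_i\| + \|A^*Ae_i\|) \|e_i\|\bigr) \|h\|$, so it belongs to $L(H,K)$. By the identity above it coincides with the original orthogonal projection on the dense subspace $\cald(A)$, so it is a genuine extension.

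There is no real obstacle here; the content of the lemma is essentially the observation that the extra $\langle Ah,Ae_i\rangle$-terms appearing in the graph-norm projection would, without any further assumption, destroy continuity in the $H$-norm, and that the adjoint condition $AK \subset \cald(A^*)$ is precisely what is needed to absorb these terms into an ordinary $H$-inner product against the fixed vectors $A^*Ae_i \in H$.
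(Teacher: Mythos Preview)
Your proof is correct and follows essentially the same route as the paper: pick an orthonormal basis of $K$ in the graph inner product, write the projection in coordinates, and use $Ae_i \in \cald(A^*)$ to rewrite $\langle Ah, Ae_i\rangle = \langle h, A^*Ae_i\rangle$, which exhibits each coefficient as a bounded linear functional on $(H,\|\cdot\|)$. The paper's only cosmetic difference is that it combines the two terms into $\langle x, e_i + A^*Ae_i\rangle$; your version spells out the norm estimate and the density argument a bit more explicitly, but the content is identical.
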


\begin{proof}
Let $\{ e_1,\ldots,e_m \}$ be an orthonormal basis of $K$ in the Hilbert space $(\cald(A),\| \cdot \|_{\cald(A)})$. Since $AK \subset \cald(A^*)$, for each $x \in \cald(A)$ we have
\begin{align*}
\pi_K^{\cald(A)}(x) = \sum_{i=1}^m \la x,e_i \ra_{\cald(A)} e_i = \sum_{i=1}^m \big( \la x,e_i \ra + \la Ax,Ae_i \ra \big) e_i = \sum_{i=1}^m \la x,e_i + A^* Ae_i \ra e_i,
\end{align*}
showing that $\pi_K^{\cald(A)}$ extends to a continuous linear operator $\pi_K^{\cald(A)} \in L(H,K)$.
\end{proof}

If $AK \subset \cald(A^*)$, then we can consider the SPDE
\begin{align}\label{SPDE-K}
\left\{
\begin{array}{rcl}
dY(t) & = & L(Y(t)) dt + \sigma(Y(t)) dW(t) \medskip
\\ Y(0) & = & x,
\end{array}
\right.
\end{align}
where, using Lemma \ref{lemma-projection-extends}, the continuous mapping $L : (\cald(A),\| \cdot \|_{\cald(A)}) \to (H,\| \cdot \|)$ is given by
\begin{align*}
L(h) = \pi_{K}^{\cald(A)} (Ah + \alpha(h)), \quad h \in \cald(A).
\end{align*}
Note that (\ref{SPDE-K}) is an SPDE in continuously embedded Hilbert spaces; see \cite{Rajeev-Tappe}.

\begin{proposition}\label{prop-K-invariant}
Suppose that $AK \subset \cald(A^*)$ and that (\ref{DA-2}) is fulfilled. Then for each $x \in K$ there exists a strong solution $Y$ to the SPDE (\ref{SPDE-K}) with $Y(0) = x$ on the state space $(K,\| \cdot \|)$. In particular, the subspace $K$ is invariant for the SPDE (\ref{SPDE-K}).
\end{proposition}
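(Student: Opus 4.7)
The strategy is to exploit the finite-dimensionality of $K$ to reduce the SPDE (\ref{SPDE-K}) to an SDE on $K$ with Lipschitz coefficients that automatically leaves $K$ invariant by construction, and then apply classical existence and uniqueness theory on a finite-dimensional Hilbert space.

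First, I would verify that the coefficients of (\ref{SPDE-K}), when restricted to $K$, take values in $K$, respectively in $L_2^0(K)$. By the very definition of $L$ as a composition with $\pi_K^{\cald(A)}$, we have $L(K) \subset K$. For the diffusion, assumption (\ref{DA-2}) gives $\sigma^j(h) \in K$ for every $h \in K$ and every $j$; since $K$ is closed in $H$ and $\sigma(h) u = \sum_{j \in \bbn} \la u,g_j \ra_{U_0}\, \sigma^j(h)$ for each $u \in U_0$, the operator $\sigma(h)$ actually maps $U_0$ into $K$, so $\sigma(h) \in L_2^0(K)$.

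Second, I would establish Lipschitz continuity of the restricted coefficients with respect to the $H$-norm. Since $K \subset \cald(A)$ is finite-dimensional, the graph norm $\| \cdot \|_{\cald(A)}$ and the $H$-norm are equivalent on $K$, and $A|_K : K \to H$ is automatically bounded. Combined with the hypothesized Lipschitz continuity of $\alpha|_{\cald(A)}$ in the graph norm and the continuity of $\pi_K^{\cald(A)} \in L(H,K)$ established in Lemma \ref{lemma-projection-extends}, this makes $L|_K : (K,\| \cdot \|) \to (K,\| \cdot \|)$ Lipschitz. An analogous argument, using the graph-norm Lipschitz continuity of $\sigma|_{\cald(A)}$, gives Lipschitz continuity of $\sigma|_K : K \to L_2^0(K)$.

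Third, (\ref{SPDE-K}) restricted to $K$ becomes a finite-dimensional SDE with Lipschitz drift $L|_K$ and diffusion $\sigma|_K$, driven by the trace-class Wiener process $W$ through the $L_2^0(K)$-valued integrand $\sigma(Y(s))$; choosing an orthonormal basis of $K$ identifies this with a classical $\bbr^m$-valued It\^o SDE. Standard theory then produces a unique continuous, $(\calf_t)$-adapted, $K$-valued process $Y$ with $Y(0) = x$ satisfying
\begin{align*}
Y(t) = x + \int_0^t L(Y(s))\, ds + \int_0^t \sigma(Y(s))\, dW(s), \quad t \in [0,T],
\end{align*}
which is the desired strong solution, and invariance of $K$ is immediate because $Y$ is $K$-valued by construction. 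The only mildly delicate point is the interplay between the two Hilbert-space norms on $K$ and the fact that $\sigma$ is a priori infinite-dimensional in the driving-noise direction; both issues are handled by the closedness of $K$ and the equivalence of norms on a finite-dimensional subspace.
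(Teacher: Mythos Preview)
Your proposal is correct and follows essentially the same route as the paper: verify that $L$ and $\sigma$ map $K$ into $K$ and $L_2^0(K)$ respectively (using finite-dimensionality for $A|_K$, Lemma \ref{lemma-projection-extends} for $\pi_K^{\cald(A)}$, and (\ref{DA-2}) plus closedness of $K$ for $\sigma$), establish Lipschitz continuity of the restricted coefficients, and then invoke standard finite-dimensional SDE theory. The paper's proof is slightly terser and appeals directly to the $H$-Lipschitz assumptions on $\alpha$ and $\sigma$ rather than passing through the graph norm and the equivalence of norms on $K$, but the argument is the same in substance.
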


\begin{proof}
Since the subspace $K$ is finite dimensional, the linear operator $A|_K : K \to H$ is continuous. Therefore, by Lemma \ref{lemma-projection-extends} the restricted mapping $L|_K : K \to K$ is Lipschitz continuous with respect to the norm $\| \cdot \|$. Furthermore, we have $\sigma(K) \subset L_2^0(K)$. Indeed, since $K$ is a closed subspace of $H$, by (\ref{DA-2}) for all $h \in K$ and $u \in U_0$ we have
\begin{align*}
\sigma(h) u = \sigma(h) \sum_{j \in \bbn} \la u,g_j \ra_{U_0} g_j = \sum_{j \in \bbn} \la u,g_j \ra_{U_0} \sigma^j(h) \in K.
\end{align*}
Moreover, the restricted mapping $\sigma|_K : K \to L_2^0(K)$ is Lipschitz continuous with respect to the norm $\| \cdot \|$. Consequently, the result follows.
\end{proof}

\begin{remark}\label{rem-state-process}
We can construct finite dimensional state processes for the SPDE (\ref{SPDE-K}) as follows. Let $\{ h_1,\ldots,h_m \}$ be a basis of $K$. We define the linear isomorphism $\phi \in L(\bbr^m,K)$ as
\begin{align*}
\phi(z) := \sum_{i=1}^m z_i h_i, \quad z \in \bbr^m.
\end{align*}
Consider the $\bbr^m$-valued SDE
\begin{align}\label{SDE-state}
\left\{
\begin{array}{rcl}
dZ(t) & = & \ell(Z(t)) dt + a(Z(t)) dW(t) \medskip
\\ Z(0) & = & z,
\end{array}
\right.
\end{align}
where $\ell : \bbr^m \to \bbr^m$ and $a : \bbr^m \to L_2^0(\bbr^m)$ are given by
\begin{align*}
\ell(z) &:= \phi^{-1} ( L(\phi(z)) ), \quad z \in \bbr^m,
\\ a^j(z) &:= \phi^{-1} ( \sigma^j(\phi(z)) ), \quad z \in \bbr^m, \quad \text{for all $j \in \bbn$.}
\end{align*}
Let $x \in K$ be arbitrary, and set $z := \phi^{-1}(x) \in \bbr^m$. Then we have $Y = \phi(Z)$, where $Y$ denotes the strong solution to the SPDE (\ref{SPDE-K}) with $Y(0) = x$, and where $Z$ denotes the strong solution to the SDE (\ref{SDE-state}) with $Z(0) = z$.
\end{remark}

Our next result shows that the solutions to the SPDEs (\ref{SPDE}) and (\ref{SPDE-K}) are indeed close to each other.

\begin{theorem}\label{thm-dist-X-Y}
Let $\epsilon > 0$ be arbitrary. We assume $K \subset \cald(A^2)$, $AK \subset \cald(A^*)$, and that conditions (\ref{DA-1}), (\ref{DA-2}) are satisfied.
Then for all $x_0 \in \cald(A)$ and $\delta > 0$ there exist a time horizon $S \in (0,T]$ and an open neighborhood $U(x_0,\delta) \subset \cald(A)$ of $x_0$ in the Hilbert space $(\cald(A),\| \cdot \|_{\cald(A)})$ such that
\begin{align*}
\bbe \big[ \| X(t;x) - Y(t;x) \|^2 \big]^{1/2} \leq \delta, \quad (t,x) \in [0,S] \times K \cap U(x_0,\delta).
\end{align*}
\end{theorem}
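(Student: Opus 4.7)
My plan is to introduce the intermediate process $P(\,\cdot\,;x) := \pi_K^{\cald(A)} X(\,\cdot\,;x)$, already appearing in Lemma~\ref{lemma-C3}, and to bound $X - Y$ via the triangle inequality
\begin{align*}
\bbe \big[ \| X(t;x) - Y(t;x) \|^2 \big]^{1/2} \leq \bbe \big[ \| X(t;x) - P(t;x) \|^2 \big]^{1/2} + \bbe \big[ \| P(t;x) - Y(t;x) \|^2 \big]^{1/2}.
\end{align*}
The first summand is dominated by $\bbe[\| X(t;x) - P(t;x) \|_{\cald(A)}^2]^{1/2}$ since $\| \cdot \| \leq \| \cdot \|_{\cald(A)}$, and Lemma~\ref{lemma-C3} makes this arbitrarily small on a sufficiently small $\cald(A)$-neighborhood of $x_0$. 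Hence the task reduces to controlling $\bbe[\| P(t;x) - Y(t;x) \|^2]^{1/2}$ by a constant multiple of the same small quantity.

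To this end, I first derive an It\^o equation for $P$. By Proposition~\ref{prop-sol-DA}, $X(\,\cdot\,;x)$ is a strong solution of (\ref{SPDE}) on $H$, and by Lemma~\ref{lemma-projection-extends} the projection $\pi_K^{\cald(A)}$ extends to an operator in $L(H,K)$. Applying it to the strong form of (\ref{SPDE}) and commuting it with the Bochner and stochastic integrals (routine, by linearity and dominated convergence) yields
\begin{align*}
P(t;x) = x + \int_0^t L(X(s;x)) \, ds + \int_0^t \pi_K^{\cald(A)} \sigma(X(s;x)) \, dW(s).
\end{align*}
Subtracting the analogous identity for $Y$ (which solves (\ref{SPDE-K}) in strong form) and invoking the It\^o isometry and Cauchy--Schwarz inequality gives
\begin{align*}
\bbe \big[ \| P(t;x) - Y(t;x) \|^2 \big] &\leq 2T \int_0^t \bbe \big[ \| L(X(s;x)) - L(Y(s;x)) \|^2 \big] \, ds \\
&\quad + 2 \int_0^t \bbe \big[ \| \pi_K^{\cald(A)} \sigma(X(s;x)) - \sigma(Y(s;x)) \|_{L_2^0(H)}^2 \big] \, ds.
\end{align*}

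The crucial algebraic step is to insert $L(P(s;x))$ and $\sigma(P(s;x))$ as intermediates in each integrand. Condition (\ref{DA-2}) together with the closedness of $K$ implies that $\sigma(P(s;x))$ takes values in $K$, hence $\pi_K^{\cald(A)} \sigma(P(s;x)) = \sigma(P(s;x))$. Using (i) Lipschitz continuity of $L$ on $\cald(A)$ in the graph norm (coming from Lemma~\ref{lemma-projection-extends} and the hypotheses on $\alpha$), (ii) Lipschitz continuity of $\sigma$ in the graph norm together with $\| \Sigma \|_{L_2^0(H)} \leq \| \Sigma \|_{L_2^0(\cald(A))}$, and (iii) the equivalence of the graph norm and $\| \cdot \|$ on the finite dimensional subspace $K$ (so that $L|_K$ and $\sigma|_K$ are Lipschitz in $\| \cdot \|$), I arrive at a bound of the form
\begin{align*}
\bbe \big[ \| P(t;x) - Y(t;x) \|^2 \big] \leq C_1 \int_0^t \bbe \big[ \| X(s;x) - P(s;x) \|_{\cald(A)}^2 \big] ds + C_2 \int_0^t \bbe \big[ \| P(s;x) - Y(s;x) \|^2 \big] ds.
\end{align*}
Gronwall's inequality followed by Lemma~\ref{lemma-C3} applied with an auxiliary $\delta' > 0$ chosen so that $(1 + C) \delta' \leq \delta$ (for an appropriate constant $C$ depending only on $C_1, C_2, T$) then yields the conclusion on a suitable $\cald(A)$-neighborhood of $x_0$. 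I expect the main difficulty to be the bookkeeping of Lipschitz constants and norm equivalences in the cross-term estimates, rather than any deep analytic obstacle; in particular the commutation of $\pi_K^{\cald(A)}$ with the stochastic integral is standard but must be made rigorous by approximation.
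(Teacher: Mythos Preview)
Your proposal is correct and follows essentially the same route as the paper: introduce $P = \pi_K^{\cald(A)} X$, derive its It\^o dynamics from the strong form of (\ref{SPDE}) via Proposition~\ref{prop-sol-DA} and Lemma~\ref{lemma-projection-extends}, bound $\bbe[\|P - Y\|^2]$ by a Gronwall argument whose forcing term is controlled by Lemma~\ref{lemma-C3}, and conclude with the triangle inequality. The only cosmetic differences are that the paper splits the drift into its $A$- and $\alpha$-parts separately (using that $A|_K$ is a bounded operator on the finite dimensional subspace $K$) and removes the projection from the stochastic integral at the outset, whereas you treat $L$ as a single Lipschitz map and retain $\pi_K^{\cald(A)}$ on $\sigma(X)$ until inserting $\sigma(P)$ --- both choices lead to the same Gronwall estimate.
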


\begin{proof}
There is a constant $L > 0$ such that for all $h,g \in H$ we have
\begin{align}\label{Lip-alpha-dist-X-Y}
\| \alpha(h) - \alpha(g) \| &\leq L \| h-g \|,
\\ \label{Lip-sigma-dist-X-Y} \| \sigma(h) - \sigma(g) \|_{L_2^0(H)} &\leq L \| h-g \|.
\end{align}
By Lemma \ref{lemma-projection-extends} the orthogonal projection $\pi_K^{\cald(A)}$ extends to a continuous linear operator $\pi_K^{\cald(A)} \in L(H,K)$ in the Hilbert space $(H,\| \cdot \|)$. We will denote by $\| \pi_K^{\cald(A)} \|$ its operator norm with respect to $\| \cdot \|$. Moreover, since the subspace $K$ is finite dimensional, the linear operator $B := A|_K : K \to H $ is continuous. We will denote by $\| B \|$ its operator norm with respect to $\| \cdot \|$. Now, we define $\eta > 0$ as
\begin{align}\label{eta-dist-X-Y}
\eta := \frac{\delta}{2\sqrt{K_1 e^{K_2 T}}} \wedge \frac{\delta}{2},
\end{align}
where the constants $K_1, K_2 > 0$ are given by
\begin{align}\label{K1-dist-X-Y}
K_1 &:= 6\Vert\pi_K^{\cald(A)}\Vert^2T(T+TL^2+L^2),
\\ \label{K2-dist-X-Y} K_2 &:= 6\Vert\pi_K^{\cald(A)}\Vert^2(T\| B \|^2+TL^2+L^2).
\end{align}
By Lemma \ref{lemma-C3} there exist a time horizon $S \in (0,T]$ and an open neighborhood $U(x_0,\delta) \subset \cald(A)$ of $x_0$ in the Hilbert space $(\cald(A),\| \cdot \|_{\cald(A)})$ such that
\begin{align}\label{est-C3}
\bbe \big[ \| X(t;x) - P(t;x) \|_{\cald(A)}^2 \big]^{1/2} \leq \eta, \quad (t,x) \in [0,S] \times K \cap U(x_0,\delta).
\end{align}
Let $x \in K \cap U(x_0,\delta)$ be arbitrary. For convenience of notation, we set $X := X(\,\cdot\,;x)$, $P := P(\,\cdot\,;x)$ and $Y := Y(\,\cdot\,;x)$. We fix an arbitrary $t \in [0,S]$. By Proposition \ref{prop-sol-DA} the process $X$ is a strong solution to the SPDE (\ref{SPDE}) with $X(0) = x$ on the state space $(H,\| \cdot \|)$. Thus we have
\begin{align*}
P(t) = x + \int_0^t \pi_K^{\cald(A)} \big( A X(s) + \alpha(X(s)) \big) ds + \int_0^t \pi_K^{\cald(A)} \sigma(X(s)) dW(s).
\end{align*}
By Proposition \ref{prop-K-invariant} the process $Y$ is a strong solution to the SPDE (\ref{SPDE-K}) with $Y(0) = x$ on the state space $(K,\| \cdot \|)$. Using \cite[Prop. 2.4.5]{Liu-Roeckner} and (\ref{DA-2}) we obtain
\begin{align*}
\int_0^t\sigma(Y(s))dW(s) &= \sum_{j=1}^{\infty} \int_0^t \sigma^j(Y(s))dB^j(s)
\\ &= \sum_{j=1}^{\infty} \int_0^t\pi_K^{\cald(A)}\sigma^j(Y(s))dB^j(s) = \int_0^t\pi_K^{\cald(A)}\sigma(Y(s))dW(s).
\end{align*}
Therefore, we have
\begin{align*}
\int_0^t \big( \pi_K^{\cald(A)} \sigma(X(s)) - \sigma(Y(s)) \big) d W(s) = \int_0^t \pi_K^{\cald(A)} \big( \sigma(X(s)) - \sigma(Y(s)) \big) d W(s),
\end{align*}
which implies
\begin{align*}
P(t) - Y(t) &= \int_0^t \pi_K^{\cald(A)} \big( A X(s) - A Y(s) + \alpha(X(s)) - \alpha(Y(s)) \big) ds
\\ &\quad + \int_0^t \pi_K^{\cald(A)} \big( \sigma(X(s)) - \sigma(Y(s)) \big) d W(s).
\end{align*}
Now, we obtain
\begin{align*}
\bbe \big[ \| P(t) - Y(t) \|^2 \big] &\leq 3 T \| \pi_K^{\cald(A)} \|^2 \int_0^t \bbe \big[ \| A X(s) - A Y(s) \|^2 \big] ds
\\ &\quad + 3T \| \pi_K^{\cald(A)} \|^2 \int_0^t \bbe \big[ \| \alpha(X(s)) - \alpha(Y(s)) \|^2 \big] ds
\\ &\quad + 3 \| \pi_K^{\cald(A)} \|^2 \int_0^t \bbe \big[ \| \sigma(X(s)) - \sigma(Y(s)) \|_{L_2^0(H)}^2 \big] ds.
\end{align*}
Using the estimate $\| Ay \| \leq \| y \|_{\cald(A)}$, $y \in \cald(A)$ and recalling that the linear operator $B := A|_K : K \to H $ is continuous, by (\ref{est-C3}) we obtain
\begin{align*}
\int_0^t \bbe \big[ \| A X(s) - A Y(s) \|^2 \big] ds &\leq 2 \int_0^t \bbe \big[ \| A X(s) - A P(s) \|^2 \big] ds
\\ &\quad + 2 \int_0^t \bbe \big[ \| A P(s) - A Y(s) \|^2 \big] ds
\\ &\leq 2 \int_0^t  \bbe \big[\| X(s) - P(s) \|_{\cald(A)}^2 \big] ds
\\ &\quad + 2 \| B \|^2 \int_0^t \bbe \big[ \| P(s) - Y(s) \|^2 \big] ds
\\ &\leq 2 T \eta^2 + 2 \| B \|^2 \int_0^t \bbe \big[ \| P(s) - Y(s) \|^2 \big] ds.
\end{align*}
Furthermore, using the estimate $\| y \| \leq \| y \|_{\cald(A)}$, $y \in \cald(A)$ and (\ref{Lip-alpha-dist-X-Y}), (\ref{est-C3}) we obtain
\begin{align*}
\int_0^t \bbe \big[ \| \alpha(X(s)) - \alpha(Y(s)) \|^2 \big] ds &\leq 2 \int_0^t \bbe \big[ \| \alpha(X(s)) - \alpha(P(s)) \|^2 \big] ds
\\ &\quad + 2 \int_0^t \bbe \big[ \| \alpha(P(s)) - \alpha(Y(s)) \|^2 \big] ds
\\ &\leq 2 L^2 \int_0^t \bbe \big[\| X(s) - P(s) \|_{\cald(A)}^2 \big] ds
\\ &\quad + 2 L^2 \int_0^t \bbe \big[ \| P(s) - Y(s) \|^2 \big] ds
\\ &\leq 2 L^2 T \eta^2 + 2 L^2 \int_0^t \bbe \big[ \| P(s) - Y(s) \|^2 \big] ds,
\end{align*}
and similarly, using the estimate $\| y \| \leq \| y \|_{\cald(A)}$, $y \in \cald(A)$ and (\ref{Lip-sigma-dist-X-Y}), (\ref{est-C3}) we obtain
\begin{align*}
\int_0^t \bbe \big[ \| \sigma(X(s)) - \sigma(Y(s)) \|_{L_2^0(H)}^2 \big] ds \leq 2 L^2 T \eta^2 + 2 L^2 \int_0^t \bbe \big[ \| P(s) - Y(s) \|^2 \big] ds.
\end{align*}
Consequently, recalling (\ref{K1-dist-X-Y}) and (\ref{K2-dist-X-Y}) we obtain
\begin{align*}
\bbe \big[ \| P(t) - Y(t) \|^2 \big] \leq K_1 \eta^2 + K_2 \int_0^t \bbe \big[ \| P(s) - Y(s) \|^2 \big] ds, \quad t \in [0,S].
\end{align*}
Therefore, recalling (\ref{eta-dist-X-Y}), by Gronwall's inequality it follows that
\begin{align}\label{est-C4}
\bbe \big[ \| P(t) - Y(t) \|^2 \big] \leq K_1 \eta^2 e^{K_2 T} \leq \bigg( \frac{\delta}{2} \bigg)^2, \quad t \in [0,S].
\end{align}
Now, using the estimate $\| y \| \leq \| y \|_{\cald(A)}$, $y \in \cald(A)$ and noting that by (\ref{eta-dist-X-Y}) we have $\eta \leq \frac{\delta}{2}$, by (\ref{est-C3}) and (\ref{est-C4}) we obtain
\begin{align*}
\bbe \big[ \| X(t) - Y(t) \|^2 \big]^{1/2} \leq \bbe \big[ \| X(t) - P(t) \|^2 \big]^{1/2} + \bbe \big[ \| P(t) - Y(t) \|^2 \big]^{1/2} \leq \delta
\end{align*}
for all $t \in [0,S]$. This completes the proof.
\end{proof}

\begin{corollary}\label{cor-dist-X-Y}
Let $\epsilon > 0$ be arbitrary. We assume that $K \subset \cald(A^2)$, $\cald(A) \subset \cald(A^*)$,
\begin{align}\label{self-adj-2}
&Ah \in K, \quad h \in K,
\\ \label{self-adj-1} &d_K^{\cald(A)} ( \alpha(h) ) \leq \epsilon, \quad h \in K,
\end{align}
and that condition (\ref{DA-2}) is fulfilled. Then for all $x_0 \in \cald(A)$ and $\delta > 0$ there exist a time horizon $S \in (0,T]$ and an open neighborhood $U(x_0,\delta) \subset \cald(A)$ of $x_0$ in the Hilbert space $(\cald(A),\| \cdot \|_{\cald(A)})$ such that
\begin{align*}
\bbe \big[ \| X(t;x) - Y(t;x) \|^2 \big]^{1/2} \leq \delta, \quad (t,x) \in [0,S] \times K \cap U(x_0,\delta).
\end{align*}
\end{corollary}

\begin{proof}
By (\ref{self-adj-2}) we have $AK \subset K \subset \cald(A) \subset \cald(A^*)$. Furthermore, using Lemma \ref{lemma-dist-subspace} we have (\ref{DA-1}). Consequently, applying Theorem \ref{thm-dist-X-Y} completes the proof.
\end{proof}

\begin{remark}
Condition (\ref{self-adj-2}) means that the subspace $K$ is $A$-invariant. Examples of $A$-invariant subspaces can be constructed by solving the eigenvalue problems
\begin{align*}
A - \lambda = 0
\end{align*}
for real eigenvalues $\lambda \in \bbr$; see also \cite[Sec. 7]{Tappe-affin-real}.
\end{remark}

\section{The HJMM equation}\label{sec-HJMM}

In this section, we apply our results from the previous sections to the HJMM (Heath-Jarrow-Morton-Musiela) equation from mathematical finance. This SPDE models the term structure of interest rates in a market of zero coupon bonds.

We recall that a zero coupon bond with maturity $T$ is a financial asset that pays to the holder one monetary unit at time $T$. Its price at $t \leq T$ can be written as the continuous discounting of one unit of the domestic currency
\begin{align*}
P(t,T) = \exp \bigg( -\int_t^T f(t,s)ds \bigg),
\end{align*}
where $f(t,T)$ is the rate prevailing at time $t$ for instantaneous borrowing at time $T$, also called the forward rate for date $T$.

After transforming the original HJM (Heath-Jarrow-Morton) dynamics of the forward rates (see \cite{HJM}) by means of the Musiela parametrization $r_t(x) = f(t,t+x)$ (see \cite{Musiela} or \cite{Brace-Musiela}), the forward rates can be considered as a mild solution to the HJMM (Heath-Jarrow-Morton-Musiela) equation
\begin{align}\label{HJMM}
\left\{
\begin{array}{rcl}
dX(t) & = & \big( \frac{d}{dx} X(t) + \alpha_{\rm HJM}(X(t)) \big) dt + \sigma(X(t)) dW(t) \medskip
\\ X(0) & = & h,
\end{array}
\right.
\end{align}
which is a particular SPDE of the type (\ref{SPDE}). The state space of the HJMM equation (\ref{HJMM}) is a separable Hilbert space $H$ of forward curves $h : \bbr_+ \to \bbr$, and $d/dx$ denotes the differential operator, which is generated by the translation semigroup. In order to ensure absence of arbitrage in the bond market, we consider the HJMM equation (\ref{HJMM}) under a martingale measure $\bbq \approx \bbp$. Then the drift term $\alpha_{\rm HJM} : H \to H$ is given by
\begin{align}\label{HJM-drift}
\alpha_{\rm HJM}(h) = \sum_{j=1}^{\infty} \sigma^j(h) \int_0^{\bullet} \sigma^j(h)(\eta) d\eta, \quad h \in H.
\end{align}
We refer, e.g., to \cite{fillnm} for further details concerning the derivation of (\ref{HJMM}) and the drift condition (\ref{HJM-drift}). Furthermore, as state space we will choose the  Filipovi\'{c} space, which has been introduced in \cite{fillnm}. For this purpose, we fix a nondecreasing $C^1$-function $w : \bbr_+ \to [1,\infty)$ such that $w^{-1/3} \in \call^1(\bbr_+)$, and denote by $H$ the space of all absolutely continuous functions $h : \mathbb{R}_+
\rightarrow \mathbb{R}$ such that
\begin{align*}
\| h \|_H := \bigg( |h(0)|^2 + \int_0^{\infty} |h'(x)|^2
w(x) dx \bigg)^{1/2} < \infty.
\end{align*}
Then the translation semigroup $(S_t)_{t \geq 0}$ given by $S_t h = h(t+\bullet)$ for $h \in H$ is a $C_0$-semigroup on $H$, and the domain of the generator $d/dx$ is given by
\begin{align*}
\cald(d/dx) = \{ h \in H : h' \in H \},
\end{align*}
see \cite{fillnm} for details. Furthermore, the translation semigroup $(S_t)_{t \geq 0}$ is pseudo-contractive; see \cite[Lemma 3.5]{Benth-Kruehner}. In the sequel, we will choose the weight function
\begin{align}\label{weight-function}
w(x) = e^{\gamma x}, \quad x \in \bbr_+
\end{align}
for some constant $\gamma > 0$. Furthermore, we will assume that the volatility $\sigma$ is Lipschitz continuous and bounded. Then, under mild additional conditions, the drift $\alpha_{\rm HJM}$ is also Lipschitz continuous and bounded, which ensures the existence of a unique mild solution. We refer to \cite{FTT-positivity} for details. For what follows, it will be advantageous to switch to the following equivalent norm. Namely, for each $h \in H$ the limit $h(\infty) := \lim_{x \to \infty} h(x)$ exists; see estimate (5.3) in \cite{fillnm}. Hence, we can define the new inner product
\begin{align}\label{inner-prod-infty}
\la h,g \ra = h(\infty) g(\infty) + \int_0^{\infty} h'(x) g'(x) e^{\gamma x} dx, \quad h,g \in H,
\end{align}
which induces an equivalent norm $\| \cdot \|$; see \cite{Tehranchi}.

\subsection{Negative interest rates}

In this section we assume that $\sigma$ satisfies the assumptions stated in Section \ref{sec-SPDEs}; in addition, we even assume that $\sigma \in C^2(H;L_2^0(H))$. This ensures that \cite[Assumption 4.8]{FTT-positivity} is fulfilled. Let $K \subset H$ be the closed convex cone of all nonnegative forward curves; that is
\begin{align*}
K = \{ h \in H : h(x) \geq 0 \text{ for all } x \in \bbr_+ \}.
\end{align*}
Then the following statements are equivalent:
\begin{enumerate}
\item[(i)] $K$ is invariant for the HJMM equation (\ref{HJMM}).

\item[(ii)] We have the SSNC (\ref{SSNC-zero}).

\item[(iii)] For each $h \in K$ and each $x \in \bbr_+$ with $h(x) = 0$ we have
\begin{align*}
\sigma^j(h)(x) = 0, \quad j \in \bbn. 
\end{align*}
\end{enumerate}
This is a consequence of \cite[Prop. 1.1]{Nakayama} and \cite[Props. 4.4, 4.9]{FTT-positivity}. We assume that these equivalent conditions (i)--(iii) are fulfilled. 

\begin{corollary}
For all $\delta > 0$ and $h_0 \in H$ there exist $\eta > 0$ and an open neighborhood $U(h_0,\delta) \subset H$ of $h_0$ such that
\begin{align*}
\bbe \big[ d_K(X(t;h))^2 \big]^{1/2} \leq \delta, \quad (t,h) \in [0,T] \times U(h_0,\delta) \cap \{ g \in H : d_K(g) \leq \eta \}.
\end{align*}
In particular, the closed convex cone $K$ is invariant for the HJMM equation (\ref{HJMM}).
\end{corollary}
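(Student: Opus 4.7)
The plan is to apply Corollary \ref{cor-SPDE-inv} essentially verbatim. The statement to be proved is precisely the conclusion of that corollary for $X = X(\,\cdot\,;h)$, $K$ the cone of nonnegative forward curves, and $x_0 = h_0$. So the whole task reduces to checking that we are in the setting of Section \ref{sec-SPDEs} and that the SSNC (\ref{SSNC-zero}) holds for the HJMM equation.

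First I would collect the framework. The state space $H$ is the Filipovi\'c space with the equivalent inner product (\ref{inner-prod-infty}), which is separable Hilbert. The driving noise $W$ is a trace class $Q$-Wiener process. The generator $d/dx$ of the translation semigroup $(S_t)_{t \geq 0}$ is pseudo-contractive on $H$ by \cite[Lemma 3.5]{Benth-Kruehner}, so condition (\ref{pseudo-beta}) is met. The coefficients $\alpha_{\rm HJM}$ and $\sigma$ are Lipschitz continuous by the standing assumptions and the results recalled from \cite{FTT-positivity}; moreover, we have explicitly assumed $\sigma^j \in C_b^2(H)$ for each $j$, as well as the absolute convergence of the series defining $\rho$ and the Lipschitz continuity of $\rho : H \to H$. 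These are exactly the hypotheses of Section \ref{sec-SPDEs}, so Theorem \ref{thm-SPDE} and hence Corollary \ref{cor-SPDE-inv} are available for the HJMM equation.

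Second, I would verify the SSNC. The paper has already stated, citing \cite[Prop. 1.1]{Nakayama} and \cite[Props. 4.4, 4.9]{FTT-positivity}, the equivalence of (i) invariance of $K$, (ii) the SSNC (\ref{SSNC-zero}), and (iii) the pointwise condition $\sigma^j(h)(x)=0$ whenever $h \in K$ and $h(x) = 0$. Since we assume (i)--(iii) hold, condition (ii) gives us the SSNC for the closed set $K$.

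With the framework and the SSNC in hand, part (1) of Corollary \ref{cor-SPDE-inv} produces, for each $\delta > 0$ and $h_0 \in H$, the number $\eta > 0$ and the open neighborhood $U(h_0,\delta) \subset H$ of $h_0$ satisfying
\begin{align*}
\bbe\big[d_K(X(t;h))^2\big]^{1/2} \leq \delta, \quad (t,h) \in [0,T] \times U(h_0,\delta) \cap \{g \in H : d_K(g) \leq \eta\},
\end{align*}
which is exactly the claimed estimate, and part (2) of the same corollary gives the invariance of $K$. There is no real obstacle here: the only thing to be careful about is that the framework of Section \ref{sec-SPDEs} is indeed in force under the standing assumptions of this subsection, since that is what licenses the application of Corollary \ref{cor-SPDE-inv}.
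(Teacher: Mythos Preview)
Your proposal is correct and follows exactly the paper's approach: the paper's proof is the single line ``This is an immediate consequence of Corollary \ref{cor-SPDE-inv},'' and you have simply spelled out the routine verification that the hypotheses of Section \ref{sec-SPDEs} and the SSNC (\ref{SSNC-zero}) are in force, which the surrounding text of the subsection already records.
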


\begin{proof}
This is an immediate consequence of Corollary \ref{cor-SPDE-inv}.
\end{proof}

Consequently, the forward curves provided by the HJMM equation (\ref{HJMM}) are nonnegative if the initial forward curve $h$ is nonnegative as well. Classically, this was a desirable feature, but nowadays, we are also facing slightly negative interest rates. Therefore, it is more reasonable to start with an initial curve $h$, which is slightly negative. Then the forward curve evolution provided by the HJMM equation (\ref{HJMM}) is also slightly negative. Here are some examples for the initial forward curve.

\begin{example}
Let $h = -\eta$ for some small $\eta > 0$. Then we have $d_K(h) = \eta$. Indeed, note that
\begin{align*}
\| g-h \|^2 = | g(\infty) + \eta  |^2 + \int_0^{\infty} |g'(x)|^2 e^{\gamma x} dx \quad \text{for all $g \in K$,}
\end{align*}
which is minimal for $g = 0$. Therefore, we obtain
\begin{align*}
d_K(h) = \inf_{g \in K} \| h-g \| = \eta.
\end{align*}
\end{example}

For the next example, we prepare an auxiliary result.

\begin{lemma}\label{lemma-dist-negative-part}
We have $d_K(h) \leq \| h^- \|$ for each $h \in H$.
\end{lemma}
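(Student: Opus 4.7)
The plan is to produce an explicit element of $K$ that is close to $h$, namely $h^+ := \max(h,0)$, and show that $h - h^+ = -h^-$ has norm equal to $\|h^-\|$.

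First I would verify that $h^+ \in K$. Since $h \in H$, the function $h : \bbr_+ \to \bbr$ is absolutely continuous, hence $h^+$ is also absolutely continuous with weak derivative $(h^+)'(x) = h'(x) \bbI_{\{ h(x) > 0 \}}$ almost everywhere (this is a standard chain rule for Sobolev functions). In particular $|(h^+)'| \leq |h'|$ a.e., so
\begin{align*}
\int_0^\infty |(h^+)'(x)|^2 e^{\gamma x} dx \leq \int_0^\infty |h'(x)|^2 e^{\gamma x} dx < \infty,
\end{align*}
and therefore $h^+ \in H$. Since $h^+ \geq 0$ pointwise, this gives $h^+ \in K$.

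Next I would compute $\| h - h^+ \|$ using the equivalent norm coming from the inner product (\ref{inner-prod-infty}). Because $h = h^+ - h^-$, we have $h - h^+ = -h^-$, and similarly $h^-$ is absolutely continuous with weak derivative $(h^-)'(x) = -h'(x) \bbI_{\{ h(x) < 0 \}}$. The limit $h^-(\infty) = \lim_{x \to \infty} h^-(x)$ exists because it does for $h$ (and trivially for the constant $0$). A direct calculation then yields
\begin{align*}
\| h - h^+ \|^2 = |h^-(\infty)|^2 + \int_0^\infty |h'(x)|^2 \bbI_{\{ h(x) < 0 \}} e^{\gamma x} dx = \| h^- \|^2.
\end{align*}

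Finally, since $h^+ \in K$, the definition of the distance function gives
\begin{align*}
d_K(h) = \inf_{g \in K} \| h - g \| \leq \| h - h^+ \| = \| h^- \|,
\end{align*}
which is the claim. The only slightly delicate point is the chain-rule identity for $(h^+)'$ (or equivalently $(h^-)'$) on an absolutely continuous function, but this is classical, and no additional structure of the Filipovi\'{c} space is needed beyond the form of the inner product (\ref{inner-prod-infty}) and the existence of the limit at infinity noted in \cite{fillnm}.
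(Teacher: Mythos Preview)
Your proof is correct and follows essentially the same approach as the paper's: both rely on the decomposition $h = h^+ - h^-$ and the fact that $h^+ \in K$, then observe that $\|h - h^+\| = \|h^-\|$ bounds $d_K(h)$. The paper phrases the last step via Lemma~\ref{lemma-dist-x-y} (namely $d_K(h^+ - h^-) \leq d_K(h^+) + \|h^-\| = \|h^-\|$), whereas you plug $h^+$ directly into the infimum; these are the same computation. You additionally spell out why $h^+ \in H$ via the chain rule for absolutely continuous functions, a detail the paper leaves implicit.
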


\begin{proof}
By Lemma \ref{lemma-dist-x-y} we have
\begin{align*}
d_K(h) = d_K(h^+ - h^-) \leq d_K(h^+) + \| h^- \| = \| h^- \|,
\end{align*}
where in the last step we have used that $h^+ \in K$.
\end{proof}

\begin{example}
In practice, one often considers an initial curve $h \in H$ which starts with negative values and becomes positive for some maturity date. Mathematically, this means that there is some $x_0 > 0$ such that 
\begin{align}\label{curve-behave}
h(x) < 0 \text{ for } x < x_0, \quad h(x_0) = 0 \quad \text{and} \quad h(x) > 0 \text{ for } x > x_0.
\end{align}
Since we use the equivalent norm induced by (\ref{inner-prod-infty}), applying Lemma \ref{lemma-dist-negative-part} we can estimate the distance as
\begin{align*}
d_K(h) \leq \| h^- \| = \sqrt{\int_0^{x_0} |h'(x)|^2 e^{\gamma x} dx}.
\end{align*}
For example, the European Central Bank publishes daily estimates of the forward rate curve from the Svensson family
\begin{align*}
h(x) = z_1 + (z_2 + z_3 x) e^{-z_5 x} + z_4 x e^{-z_6 x}, \quad x \in \bbr_+
\end{align*}
with parameters $z_1,z_2,z_3,z_4 \in \bbr$ and $z_5,z_6 > 0$. Empirically, it turns out that presently these forward rate curves have the described behaviour (\ref{curve-behave}). Note that $h \in H$ provided that $\gamma < 2z_5$ and $\gamma < 2z_6$.
\end{example}

\subsection{An extension of the Svensson family}

It has turned out that parametric families like the Nelson-Siegel family or the Svensson family are problematic when used for arbitrage free interest rate modeling. We refer to \cite{Bjoerk-Christensen, Filipovic-NS}, where it has been shown that there is no nontrivial interest rate model which is consistent with the Nelson-Siegel family, and to \cite[Sec. 3.7.2]{fillnm}, where it has been shown that there are only very few nontrivial interest rate models which are consistent with the Svensson family. In this section, we suggest a more flexible approach by allowing deviations from a parametric family. Let us consider the family $\{ F(\cdot,z) : z \in \bbr^5 \times (0,\infty)^2 \}$ consisting of all interest rate curves of the form
\begin{align*}
F(x,z) = z_1 + (z_2 + z_3 x) e^{-z_6 x} + (z_4 + z_5 x) e^{-z_7 x}, \quad x \in \bbr_+.
\end{align*}
Note that for $z_4 = 0$ this is the Svensson family, and for $z_4 = z_5 = z_7 = 0$ we obtain the Nelson-Siegel family. For what follows, we fix $z_6,z_7 > \frac{\gamma}{2}$, where we recall that $\gamma > 0$ stems from the weight function (\ref{weight-function}). We consider the five-dimensional subspace
\begin{align*}
K = \lin \{ h_1,h_2,h_3,h_4,h_5 \},
\end{align*}
where for each $x \in \bbr_+$ we have set
\begin{align*}
h_1(x) &= 1,
\\ h_2(x) &= e^{-z_6 x},
\\ h_3(x) &= x e^{-z_6 x},
\\ h_4(x) &= e^{-z_7 x},
\\ h_5(x) &= x e^{-z_7 x}.
\end{align*}
Note that $K \subset H$, because $z_6,z_7 > \frac{\gamma}{2}$. In fact, we even have $K \subset \cald(d/dx)$ and
\begin{align}\label{K-d-dx-invariant}
(d/dx) K \subset K.
\end{align}
We assume that the driving Wiener process $W$ in the HJMM equation (\ref{HJMM}) is real-valued, and that the volatility $\sigma : H \to H$ is constant and given by
\begin{align*}
\sigma = e^{-z_6 \cdot}.
\end{align*}
Then the drift term $\alpha_{\rm HJM} : H \to H$ given by (\ref{HJM-drift}) is also constant, and it is given by
\begin{align}\label{drift-HJM-z6}
\alpha_{\rm HJM} = \frac{1}{z_6} e^{-z_6\cdot} (1 - e^{-z_6\cdot}) = \frac{1}{z_6} ( e^{-z_6\cdot} - e^{-2z_6\cdot} ).
\end{align}

\begin{lemma}\label{lemma-cont-exp-z}
The mapping
\begin{align}\label{mapping-exp-H}
(\gamma/2,\infty) \to H, \quad z \mapsto e^{-z \cdot}
\end{align}
is locally Lipschitz continuous.
\end{lemma}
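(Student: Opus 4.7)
The plan is to work directly with the equivalent norm induced by the inner product (\ref{inner-prod-infty}) and reduce the claim to estimating an elementary integral. For $z > \gamma/2$, set $h_z := e^{-z\cdot}$; then $h_z(\infty) = 0$ and $h_z'(x) = -z e^{-zx}$, so
\begin{align*}
\| h_{z_1} - h_{z_2} \|^2 = \int_0^{\infty} \big| z_1 e^{-z_1 x} - z_2 e^{-z_2 x} \big|^2 e^{\gamma x} dx.
\end{align*}
The task is thus to bound this integral by a constant times $|z_1 - z_2|^2$, uniformly for $z_1,z_2$ in a compact subinterval of $(\gamma/2,\infty)$.

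First I would fix a compact interval $[a,b] \subset (\gamma/2,\infty)$ and, for $z_1,z_2 \in [a,b]$, apply the fundamental theorem of calculus to the smooth map $z \mapsto z e^{-zx}$, yielding
\begin{align*}
z_1 e^{-z_1 x} - z_2 e^{-z_2 x} = \int_{z_2}^{z_1} (1 - zx) e^{-zx} dz.
\end{align*}
For $x \in \bbr_+$ and $z \in [a,b]$, we have $|1 - zx| \leq 1 + bx$ and $e^{-zx} \leq e^{-ax}$, giving the pointwise estimate
\begin{align*}
\big| z_1 e^{-z_1 x} - z_2 e^{-z_2 x} \big| \leq |z_1 - z_2| \, (1 + bx) e^{-ax}.
\end{align*}

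Squaring and multiplying by $e^{\gamma x}$ yields
\begin{align*}
\big| z_1 e^{-z_1 x} - z_2 e^{-z_2 x} \big|^2 e^{\gamma x} \leq |z_1 - z_2|^2 \, (1 + bx)^2 e^{-(2a - \gamma)x}.
\end{align*}
Since $a > \gamma/2$ we have $2a - \gamma > 0$, so the function $(1+bx)^2 e^{-(2a-\gamma)x}$ is integrable over $\bbr_+$ and its integral, call it $C(a,b)$, is finite. Integrating then gives
\begin{align*}
\| h_{z_1} - h_{z_2} \|^2 \leq C(a,b) \, |z_1 - z_2|^2 \quad \text{for all } z_1, z_2 \in [a,b],
\end{align*}
which is the local Lipschitz property. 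The only subtle point is verifying that $a > \gamma/2$ produces a genuinely decaying exponential after multiplication by the weight $e^{\gamma x}$; this is exactly why the statement requires $z \in (\gamma/2,\infty)$ and parallels the earlier assumption $z_6,z_7 > \gamma/2$ used to secure $K \subset H$.
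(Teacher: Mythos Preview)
Your proof is correct. Both your argument and the paper's reduce to the same integral $\int_0^\infty |z_1 e^{-z_1 x} - z_2 e^{-z_2 x}|^2 e^{\gamma x}\,dx$ and then control the integrand by a factor of $|z_1 - z_2|$ times an integrable envelope. The difference is in how that factor is extracted: the paper writes $y e^{-yx} - z e^{-zx} = (y-z)e^{-yx} + z(e^{-yx}-e^{-zx})$, bounds $|e^{-yx}-e^{-zx}| \leq x e^{-yx}|y-z|$ via the mean value theorem, and then evaluates the two resulting Gaussian-type integrals explicitly, obtaining a constant depending directly on $y$ and $z$. You instead apply the fundamental theorem of calculus to $z \mapsto z e^{-zx}$ in one stroke, getting the envelope $(1+bx)e^{-ax}$ on a fixed compact interval $[a,b]$. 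Your route is slightly more streamlined and avoids the add-and-subtract split; the paper's route yields an explicit Lipschitz constant as a function of the two points rather than of an ambient compact interval. Both are standard and equally valid.
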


\begin{proof}
Let $y,z \in (\gamma/2,\infty)$ with $y \leq z$ be arbitrary. Then we have
\begin{align*}
| e^{-yx} - e^{-zx} | \leq \bigg( \sup_{w \in [y,z]} x e^{-wx} \bigg) |y-z| = x e^{-yx} |y-z|, \quad x \in \bbr_+.
\end{align*}
Therefore, we obtain
\begin{align*}
&\| e^{-y \cdot} - e^{-z \cdot} \|^2 = \int_0^{\infty} \big( y e^{-yx} - z e^{-zx} \big)^2 e^{\gamma x} dx
\\ &\leq 2 \int_0^{\infty} \big( ( y - z) e^{-yx} \big)^2 e^{\gamma x} dx + 2 \int_0^{\infty} \big( z (e^{-yx} - e^{-zx}) \big)^2 e^{\gamma x} dx
\\ &\leq 2 (y-z)^2 \int_0^{\infty} \big( e^{-yx} \big)^2 e^{\gamma x} dx + 2 z^2 (y-z)^2 \int_0^{\infty} \big( x e^{-yx} \big)^2 e^{\gamma x} dx
\\ &= \frac{2 (y-z)^2}{2y - \gamma} + \frac{4 z^2 (y-z)^2}{(2y - \gamma)^3} = \frac{2}{2y - \gamma} \bigg( 1 + \frac{2 z^2}{(2y - \gamma)^2} \bigg) (y-z)^2,
\end{align*}
proving that the mapping (\ref{mapping-exp-H}) is locally Lipschitz continuous.
\end{proof}

\begin{lemma}
We have
\begin{align*}
d_K ( \alpha_{\rm HJM} ) \leq \frac{1}{z_6} \| e^{-2 z_6 \cdot} - e^{-z_7 \cdot} \|.
\end{align*}
\end{lemma}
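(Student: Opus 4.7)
The plan is to exhibit an explicit element $g \in K$ whose distance to $\alpha_{\rm HJM}$ is exactly $\frac{1}{z_6}\|e^{-2z_6\cdot}-e^{-z_7\cdot}\|$, and then invoke the definition of $d_K$. The key observation is that by formula (\ref{drift-HJM-z6}), the drift $\alpha_{\rm HJM}$ is a linear combination of $e^{-z_6\cdot}$ and $e^{-2z_6\cdot}$; the first of these lies in $K$ (it is $h_2$), but the second does not. So the natural strategy is to substitute $e^{-z_7\cdot}=h_4$ for the offending term $e^{-2z_6\cdot}$ and measure the resulting error.

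Concretely, I would set
\begin{align*}
g := \frac{1}{z_6}\bigl(h_2 - h_4\bigr) = \frac{1}{z_6}\bigl(e^{-z_6\cdot} - e^{-z_7\cdot}\bigr),
\end{align*}
which lies in $K$ since $h_2,h_4 \in K$. Using (\ref{drift-HJM-z6}), a direct subtraction yields
\begin{align*}
\alpha_{\rm HJM} - g = \frac{1}{z_6}\bigl(e^{-z_6\cdot} - e^{-2z_6\cdot}\bigr) - \frac{1}{z_6}\bigl(e^{-z_6\cdot} - e^{-z_7\cdot}\bigr) = \frac{1}{z_6}\bigl(e^{-z_7\cdot} - e^{-2z_6\cdot}\bigr).
\end{align*}

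From the definition of the distance function, $d_K(\alpha_{\rm HJM}) \leq \|\alpha_{\rm HJM} - g\|$, and the right-hand side is exactly $\frac{1}{z_6}\|e^{-2z_6\cdot}-e^{-z_7\cdot}\|$, which finishes the proof. There is essentially no obstacle here: the only step requiring thought is spotting the right candidate $g\in K$, and once the substitution $e^{-2z_6\cdot}\leadsto e^{-z_7\cdot}$ is made the rest is a one-line computation. Note also that $e^{-z_7\cdot} \in H$ because $z_7 > \gamma/2$, so the norm on the right is finite (this is implicitly used via Lemma \ref{lemma-cont-exp-z}, which guarantees $e^{-z\cdot} \in H$ for $z > \gamma/2$).
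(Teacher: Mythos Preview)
Your proof is correct and is essentially the same as the paper's. The paper first invokes Lemma \ref{lemma-dist-subspace} (translation invariance of $d_K$ under elements of the subspace $K$) to reduce to $d_K(\alpha_{\rm HJM}) = d_K\bigl(\tfrac{1}{z_6}e^{-2z_6\cdot}\bigr)$ and then bounds this by the distance to $\tfrac{1}{z_6}e^{-z_7\cdot}\in K$, whereas you go directly to the element $g=\tfrac{1}{z_6}(h_2-h_4)$; unwinding, both arguments compare $\alpha_{\rm HJM}$ with the same element of $K$.
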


\begin{proof}
By (\ref{drift-HJM-z6}) and Lemma \ref{lemma-dist-subspace} we obtain
\begin{align*}
d_K ( \alpha_{\rm HJM} ) &= d_K \bigg( \frac{1}{z_6} e^{-2 z_6 \cdot} \bigg) \leq \bigg\| \frac{1}{z_6} e^{-2 z_6 \cdot} - \frac{1}{z_6} e^{-z_7 \cdot} \bigg\|
\\ &= \frac{1}{z_6} \| e^{-2 z_6 \cdot} - e^{-z_7 \cdot} \|,
\end{align*}
completing the proof.
\end{proof}

Hence, by Corollary \ref{cor-subspace-2} the $\epsilon$-SSNC (\ref{SSNC}) is satisfied with 
\begin{align*}
\epsilon = \frac{1}{z_6} \| e^{-2 z_6 \cdot} - e^{-z_7 \cdot} \|. 
\end{align*}
Of course, if $2z_6 = z_7$, then we have $\epsilon = 0$. Furthermore, by Lemma \ref{lemma-cont-exp-z} the constant $\epsilon \geq 0$ is small provided that $2 z_6 \approx z_7$.

\begin{corollary}
For all $h_0 \in H$ and $\delta > 0$ there exist a time horizon $S \in (0,T]$ and an open neighborhood $U(h_0,\delta) \subset H$ of $h_0$ such that
\begin{align*}
\bbe \big[ d_K(X(t;h))^2 \big]^{1/2} \leq \delta, \quad (t,h) \in [0,S] \times K \cap U(h_0,\delta).
\end{align*}
\end{corollary}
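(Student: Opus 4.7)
The plan is to apply the general $L^2$-distance bound of Corollary \ref{cor-d-zero} directly; essentially all of the analytical work has already been done in the two preceding lemmas of this subsection.

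First I would verify that the standing assumptions of Section \ref{sec-SPDEs} are in force in the present HJMM setup. The translation semigroup on the Filipovi\'{c} space $H$ is pseudo-contractive (under the equivalent norm induced by (\ref{inner-prod-infty})); the HJM drift $\alpha_{\rm HJM}$ given by (\ref{drift-HJM-z6}) is Lipschitz and bounded as recalled at the start of this section; and since the real-valued volatility $\sigma \equiv e^{-z_6 \cdot}$ is constant, we trivially have $\sigma^j \in C_b^2(H)$ and the Stratonovich correction $\rho(h) = \tfrac{1}{2} \sum_j D\sigma^j(h) \sigma^j(h)$ vanishes identically, so $\rho$ is (trivially) Lipschitz.

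Second, the $\epsilon$-SSNC (\ref{SSNC}) with $\epsilon = \frac{1}{z_6} \| e^{-2 z_6 \cdot} - e^{-z_7 \cdot} \|$ has just been established via Corollary \ref{cor-subspace-2}: the five-dimensional subspace $K$ satisfies $K \subset \cald(d/dx)$ with $(d/dx) K \subset K$ by (\ref{K-d-dx-invariant}), the previous lemma provides $d_K(\alpha_{\rm HJM}) \leq \epsilon$, and $\sigma = h_2 \in K$ gives $\sigma^j(h) \in K$ for every $h \in K$ and $j \in \bbn$ (here only $j=1$ is relevant, because $W$ is real-valued).

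Third, if $2 z_6 \neq z_7$ then $\epsilon > 0$ and Corollary \ref{cor-d-zero} applied to $h_0$ and $\delta$ directly produces the required time horizon $S \in (0,T]$ and open neighborhood $U(h_0,\delta) \subset H$. In the degenerate case $2 z_6 = z_7$ we have $\epsilon = 0$ and the SSNC (\ref{SSNC-zero}) holds, so Corollary \ref{cor-SPDE-inv} gives the stronger conclusion that $K$ is invariant for the HJMM equation, and the claimed bound then holds trivially (for instance with $S = T$ and $U(h_0,\delta) = H$) for any starting point $h \in K$. No real obstacle arises: the only point requiring care is the bookkeeping verification of the ambient framework of Section \ref{sec-SPDEs}, which is immediate because the volatility is constant; once that is done, the result is a direct citation of Corollary \ref{cor-d-zero}.
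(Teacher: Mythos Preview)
Your proposal is correct and takes essentially the same approach as the paper: the paper's own proof is the single line ``This is an immediate consequence of Corollary~\ref{cor-d-zero},'' and you simply unpack the verification that the hypotheses of Section~\ref{sec-SPDEs} and the $\epsilon$-SSNC are in force before invoking that corollary. Your separate treatment of the degenerate case $2z_6=z_7$ (where $\epsilon=0$) via Corollary~\ref{cor-SPDE-inv} is a harmless refinement; the paper implicitly covers it too, since the $0$-SSNC implies the $\epsilon$-SSNC for any $\epsilon>0$, so Corollary~\ref{cor-d-zero} still applies.
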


\begin{proof}
This is an immediate consequence of Corollary \ref{cor-d-zero}.
\end{proof}

\begin{corollary}
If $2z_6 = z_7$, then the subspace $K$ is invariant for the HJMM equation (\ref{HJMM}).
\end{corollary}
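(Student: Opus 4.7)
The plan is to reduce to the already-established $\epsilon = 0$ case of the SSNC machinery and then invoke Corollary \ref{cor-SPDE-inv}. The key observation is that when $2z_6 = z_7$, the explicit expression (\ref{drift-HJM-z6}) for the HJM drift becomes
\[
\alpha_{\rm HJM} \;=\; \frac{1}{z_6}\bigl(e^{-z_6\cdot} - e^{-2z_6\cdot}\bigr) \;=\; \frac{1}{z_6}\bigl(e^{-z_6\cdot} - e^{-z_7\cdot}\bigr) \;=\; \frac{1}{z_6}(h_2 - h_4) \;\in\; K.
\]
In particular, $d_K(\alpha_{\rm HJM}) = 0$, which matches the bound $\epsilon = \tfrac{1}{z_6}\|e^{-2z_6\cdot} - e^{-z_7\cdot}\|$ derived just before the corollary.

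With this in hand, I would verify the three hypotheses of Corollary \ref{cor-subspace-2} (with the operator $A = d/dx$, the closed subset $K$, and $\epsilon = 0$). Namely: the inclusion $(d/dx)K \subset K$ is exactly (\ref{K-d-dx-invariant}); the estimate $d_K(\alpha_{\rm HJM}(h)) = 0$ holds for all $h \in K$ because $\alpha_{\rm HJM}$ is constant and lies in $K$ by the computation above; and since the volatility is constant with $\sigma = e^{-z_6\cdot} = h_2 \in K$, we trivially have $\sigma^j(h) \in K$ for every $h \in K$ (here the driving Wiener process is one-dimensional, so only a single $\sigma^j$ appears). Corollary \ref{cor-subspace-2} then yields the $0$-SSNC, which is precisely the SSNC (\ref{SSNC-zero}).

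Finally, applying Corollary \ref{cor-SPDE-inv}(2) delivers invariance of $K$ for the HJMM equation (\ref{HJMM}). There is no substantive obstacle: the whole content of the statement is that the tuning $2z_6 = z_7$ collapses the previously computed quantitative defect $\tfrac{1}{z_6}\|e^{-2z_6\cdot} - e^{-z_7\cdot}\|$ to zero, because the unwanted exponential $e^{-2z_6\cdot}$ generated by the HJM drift correction then coincides with the basis function $h_4 = e^{-z_7\cdot}$ already present in $K$.
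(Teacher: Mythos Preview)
Your proposal is correct and follows essentially the same route as the paper: the text immediately preceding the corollary already records (via Corollary~\ref{cor-subspace-2}) that the $\epsilon$-SSNC holds with $\epsilon = \tfrac{1}{z_6}\|e^{-2z_6\cdot}-e^{-z_7\cdot}\|$, which vanishes when $2z_6=z_7$, and the paper's one-line proof then simply invokes Corollary~\ref{cor-SPDE-inv}. Your write-up just unpacks these steps a bit more explicitly.
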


\begin{proof}
This is an immediate consequence of Corollary \ref{cor-SPDE-inv}.
\end{proof}

Unfortunately, we cannot apply Theorem \ref{thm-dist-X-Y} in the present situation, because the condition $(d/dx) K \subset \cald((d/dx)^*)$ is not fulfilled. This is a consequence of the following Proposition \ref{prop-inner-prod-unbounded}. First, we prepare an auxiliary result.

\begin{lemma}\label{lemma-inner-prod-unbounded}
Let $z > \frac{\gamma}{2}$ be arbitrary and let $h \in \cald(d/dx)$ be given by $h = e^{-z \cdot}$. Then for each $g \in \cald(d/dx)$ with $\lim_{x \to \infty} g'(x) e^{-(z-\gamma)x} = 0$ we have
\begin{align*}
\bigg\la \frac{d}{dx} g, \frac{d}{dx} h \bigg\ra = - z^2 \bigg( g'(0) + \frac{z-\gamma}{z} \la g,h \ra \bigg).
\end{align*}
\end{lemma}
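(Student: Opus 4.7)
The plan is a direct computation based on the definition (\ref{inner-prod-infty}) of the inner product, combined with integration by parts. Since $h(x) = e^{-zx}$ with $z > 0$, we have $h(\infty) = 0$, $h'(x) = -z\, e^{-zx}$ (so $h'(\infty) = 0$), and $(h')'(x) = z^2 e^{-zx} = z^2 h(x)$. Applying (\ref{inner-prod-infty}) to $g'$ and $h'$ in place of general elements (which is legitimate since $g, h \in \mathcal{D}(d/dx)$ means $g', h' \in H$) gives
\begin{align*}
\Big\langle \tfrac{d}{dx}g, \tfrac{d}{dx}h \Big\rangle = g'(\infty) h'(\infty) + \int_0^\infty g''(x)\, (h')'(x)\, e^{\gamma x} dx = z^2 \int_0^\infty g''(x)\, e^{-(z-\gamma)x} dx.
\end{align*}

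Next I would integrate by parts. Taking $u(x) = e^{-(z-\gamma) x}$ and $dv = g''(x) dx$, and using the assumption $z > \gamma$ to ensure $e^{-(z-\gamma)x} \to 0$ and that $g'(x) e^{-(z-\gamma)x} \to 0$ as $x \to \infty$ (using that $g'(\infty)$ exists, which follows from $g' \in H$ by the embedding property of the Filipović space), I obtain
\begin{align*}
\int_0^\infty g''(x)\, e^{-(z-\gamma) x} dx = -g'(0) + (z-\gamma) \int_0^\infty g'(x)\, e^{-(z-\gamma) x} dx.
\end{align*}

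The remaining integral is then rewritten in terms of $\langle g, h\rangle$. Since $h(\infty) = 0$ and $h'(x) e^{\gamma x} = -z\, e^{-(z-\gamma) x}$, the definition (\ref{inner-prod-infty}) yields
\begin{align*}
\langle g, h \rangle = \int_0^\infty g'(x)\, (-z)\, e^{-(z-\gamma) x} dx = -z \int_0^\infty g'(x)\, e^{-(z-\gamma) x} dx,
\end{align*}
so that $\int_0^\infty g'(x) e^{-(z-\gamma)x}\, dx = -\tfrac{1}{z}\langle g, h\rangle$. Substituting back gives
\begin{align*}
\Big\langle \tfrac{d}{dx}g, \tfrac{d}{dx}h \Big\rangle = z^2\Big( -g'(0) - \tfrac{z-\gamma}{z}\langle g, h\rangle\Big) = -z^2\Big(g'(0) + \tfrac{z-\gamma}{z}\langle g, h\rangle\Big),
\end{align*}
as claimed.

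The only subtle step is the justification that the boundary term at infinity in the integration by parts vanishes; this is the point to be careful about, but it is immediate from $z > \gamma$ and the fact that for elements of $H$ the value at $\infty$ exists (cf.\ \cite{fillnm}, estimate (5.3)). Everything else is mechanical calculus.
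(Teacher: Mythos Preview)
Your proof is correct and follows essentially the same approach as the paper's: both compute $\langle g',h'\rangle$ via the definition (\ref{inner-prod-infty}), substitute $h''(x)=z^2 e^{-zx}$, integrate by parts, and then identify the remaining integral with $\langle g,h\rangle$. Your version is slightly more explicit in justifying the vanishing boundary term at infinity (via $z>\gamma$ and the existence of $g'(\infty)$), which the paper leaves implicit.
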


\begin{proof}
Since we consider the inner product (\ref{inner-prod-infty}), using integration by parts we obtain
\begin{align*}
\la g',h' \ra &= \int_0^{\infty} g''(x) h''(x) e^{\gamma x} dx = z^2 \int_0^{\infty} g''(x) e^{-(z-\gamma) x} dx
\\ &= z^2 \bigg( - g'(0) + (z-\gamma) \int_0^{\infty} g'(x) e^{-(z-\gamma) x} dx \bigg)
\\ &= z^2 \bigg( - g'(0) - \frac{z-\gamma}{z} \int_0^{\infty} g'(x) h'(x) e^{\gamma x} dx \bigg) = - z^2 \bigg( g'(0) + \frac{z-\gamma}{z} \la g,h \ra \bigg).
\end{align*}
This completes the proof.
\end{proof}

\begin{proposition}\label{prop-inner-prod-unbounded}
Let $z > \frac{\gamma}{2}$ be arbitrary and let $h \in \cald(d/dx)$ be given by $h = e^{-z \cdot}$. Then the linear functional
\begin{align*}
\bigg( \cald \bigg( \frac{d}{dx} \bigg) ,\| \cdot \| \bigg) \to \bbr, \quad g \mapsto \bigg\la \frac{d}{dx} g, \frac{d}{dx} h \bigg\ra
\end{align*}
is unbounded.
\end{proposition}

\begin{proof}
We define the sequence $(f_n)_{n \in \bbn}$ of continuous functions $f_n : \bbr_+ \to \bbr$ as
\begin{align*}
f_n(x) := \sqrt{(n - n^2 x) \bbI_{[0,\frac{1}{n}]}(x)}, \quad x \in \bbr_+.
\end{align*}
Furthermore, we define the sequence $(g_n)_{n \in \bbn}$ of absolutely continuous functions $g_n : \bbr_+ \to \bbr$ as
\begin{align*}
g_n(x) := \int_0^x f_n(u) du - \int_0^{\infty} f_n(u) du, \quad x \in \bbr_+.
\end{align*}
Then we have $g_n \in \cald(d/dx)$ with $g_n(\infty) = 0$ and $g_n' = f_n$ for each $n \in \bbn$. Moreover, since we use the norm induced by the inner product (\ref{inner-prod-infty}), for each $n \in \bbn$ we have
\begin{align*}
\| g_n \|^2 = \int_0^1 g_n'(x)^2 e^{\gamma x} dx = \int_0^1 f_n(x)^2 e^{\gamma x} dx \leq e^{\gamma} \int_0^1 f_n(x)^2 dx = \frac{e^{\gamma}}{2}.
\end{align*}
Furthermore, for each $n \in \bbn$ we have
\begin{align*}
g_n'(0) = f_n(0) = \sqrt{n}
\end{align*}
as well as
$$
\lim_{x\to\infty}g'_n(x)e^{-(z-\gamma)x}
=\lim_{x\to\infty}f_n(x)e^{-(z-\gamma)x}=0,
$$
where we note that $f_n(x)=0$ for $x\ge\frac1n$. Therefore, by Lemma \ref{lemma-inner-prod-unbounded} we obtain 
$$
\langle g'_n,h'\rangle
=-z^2\left(g'_n(0)+\frac{z-\gamma}z\langle g_n,h\rangle\right)
=-z^2\left(\sqrt{n}+\frac{z-\gamma}z\langle g_n,h\rangle\right)
\to -\infty
$$
as $n\to\infty$. This completes the proof.
\end{proof}

However, in the upcoming section we will consider another approach to interest rate modeling and present an example, where Theorem \ref{thm-dist-X-Y} applies.

\section{Another approach to interest rate modeling}\label{sec-interest}

In the article \cite{Cont} a model for the term structure of interest rates, which is different from the HJMM equation (\ref{HJMM}), has been proposed. Here we consider a simplified version, where it is assumed that the fluctuation process satisfies a second order SPDE of the form
\begin{align}\label{SPDE-bond-2}
\left\{
\begin{array}{rcl}
dX(t) & = & \big( \frac{\kappa}{2} \frac{d^2}{d x^2} X(t) + \frac{d}{d x}X(t) + \alpha \big) dt + \sigma d W(t)
\medskip
\\ X(0) & = & h
\end{array}
\right.
\end{align}
with a positive constant $\kappa > 0$ and Dirichlet boundary conditions. The state space is $H = L^2((0,1),dx)$, and we can choose the generator
\begin{align*}
A = \frac{\kappa}{2} \frac{d^2}{d x^2} + \frac{d}{d x}
\end{align*}
on the domain $\cald(A) = H^2((0,1)) \cap H_0^1((0,1))$. Furthermore $\alpha,\sigma \in \cald(A)$ are constants, and $W$ is a real-valued Wiener process. We assume there is a finite index set $I \subset \bbn$ such that the subspace $K \subset \cald(A^2)$ is given by
\begin{align*}
K = \lin \{ x \mapsto \exp(- x / \kappa) \sin(n \pi x) : n \in I \}.
\end{align*}
We assume that $\alpha \notin K$ and $\sigma \in K$, and define the constant $\epsilon > 0$ as
\begin{align*}
\epsilon := d_K^{\cald(A)}(\alpha),
\end{align*}
where we recall the notation from Section \ref{sec-subspace}. We also recall the SPDE (\ref{SPDE-K}), which in the present situation reads
\begin{align}\label{SPDE-bond-Y}
\left\{
\begin{array}{rcl}
dY(t) & = & \pi_K^{\cald(A)} \big( \frac{\kappa}{2} \frac{d^2}{d x^2} Y(t) + \frac{d}{d x}Y(t) + \alpha \big) dt + \sigma d W(t)
\medskip
\\ Y(0) & = & h.
\end{array}
\right.
\end{align}

\begin{corollary}
Let $h_0 \in \cald(A)$ and $\delta > 0$ be arbitrary. Then there exist a time horizon $S \in (0,T]$ and an open neighborhood $U(h_0,\delta) \subset \cald(A)$ of $h_0$ in the Hilbert space $(\cald(A),\| \cdot \|_{\cald(A)})$ such that
\begin{align*}
\bbe \big[ \| X(t;h) - Y(t;h) \|^2 \big]^{1/2} \leq \delta, \quad (t,h) \in [0,S] \times K \cap U(h_0,\delta).
\end{align*}
\end{corollary}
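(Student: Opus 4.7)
The plan is to invoke Corollary \ref{cor-dist-X-Y}. That result directly yields the desired estimate once I check (i) the standing assumptions of Section \ref{sec-subspace}, (ii) the adjoint inclusion $\cald(A) \subset \cald(A^*)$, and (iii) the three conditions (\ref{self-adj-2})--(\ref{self-adj-3}) with $\epsilon$ as defined in the statement.

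First I would dispatch the items that follow from the constancy of $\alpha$ and $\sigma$. Both are constant elements of $\cald(A)$, so the maps $\alpha : \cald(A) \to \cald(A)$ and $\sigma : \cald(A) \to L_2^0(\cald(A))$ are automatically Lipschitz, all derivatives of $\sigma$ vanish (hence $\rho \equiv 0$ and is trivially Lipschitz), condition (\ref{self-adj-1}) reduces to the tautology $d_K^{\cald(A)}(\alpha) \leq \epsilon$ by the very definition of $\epsilon$, and (\ref{self-adj-3}) reduces to $\sigma \in K$, which is assumed.

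Next, for the adjoint inclusion I would apply two integrations by parts on $(0,1)$ to $\la Au,v\ra$ for $u,v \in H^2((0,1)) \cap H_0^1((0,1))$. The boundary terms vanish thanks to the Dirichlet conditions, which identifies the adjoint as $A^* = \tfrac{\kappa}{2}\tfrac{d^2}{dx^2} - \tfrac{d}{dx}$ on $\cald(A^*) = H^2((0,1))\cap H_0^1((0,1)) = \cald(A)$, so in particular $\cald(A) \subset \cald(A^*)$.

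The only nontrivial step is the $A$-invariance (\ref{self-adj-2}) of $K$. For each basis function $\phi_n(x) = e^{-x/\kappa}\sin(n\pi x)$ (with $n \in I$) I would compute
\begin{align*}
\phi_n'(x) &= e^{-x/\kappa}\bigl[-\tfrac{1}{\kappa}\sin(n\pi x) + n\pi\cos(n\pi x)\bigr], \\
\phi_n''(x) &= e^{-x/\kappa}\bigl[\bigl(\tfrac{1}{\kappa^2} - n^2\pi^2\bigr)\sin(n\pi x) - \tfrac{2n\pi}{\kappa}\cos(n\pi x)\bigr],
\end{align*}
and then observe that in $A\phi_n = \tfrac{\kappa}{2}\phi_n'' + \phi_n'$ the $\cos(n\pi x)$ contributions cancel identically, leaving $A\phi_n = -\tfrac{1}{2\kappa}(1 + \kappa^2 n^2 \pi^2)\phi_n \in K$. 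This also confirms $\phi_n \in \cald(A^2)$, consistent with the standing hypothesis $K \subset \cald(A^2)$. With (i)--(iii) verified, Corollary \ref{cor-dist-X-Y} delivers the required $S$ and $U(h_0,\delta)$. No serious obstacle arises; the proof is essentially a verification that the eigenfunction basis of $A$ restricted to Dirichlet boundary conditions has been chosen to make all hypotheses of Corollary \ref{cor-dist-X-Y} trivially true.
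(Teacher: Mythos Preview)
Your proposal is correct and follows essentially the same route as the paper: identify $A^*=\tfrac{\kappa}{2}\tfrac{d^2}{dx^2}-\tfrac{d}{dx}$ with $\cald(A^*)=\cald(A)$, verify that the chosen basis functions are eigenfunctions of $A$ (so $K$ is $A$-invariant), and then invoke Corollary~\ref{cor-dist-X-Y}. The paper's proof is terser, omitting the routine checks on the standing hypotheses of Section~\ref{sec-subspace} that you spell out, but the substance is identical.
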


\begin{proof}
We have $\cald(A^*) = \cald(A)$ and
\begin{align*}
A^* = \frac{\kappa}{2} \frac{d^2}{d x^2} - \frac{d}{d x}.
\end{align*}
Furthermore, the eigenvalue problem
\begin{align*}
\frac{\kappa}{2} u'' + u' - \lambda u = 0, \quad u(0) = u(1) = 0
\end{align*}
has the eigenvalues
\begin{align*}
\lambda_n = - \frac{1}{2 \kappa} \big( 1 + n^2 \pi^2 \kappa^2 \big), \quad n \in \bbn,
\end{align*}
with corresponding eigenfunctions
\begin{align*}
u_n(x) = \exp ( - x / \kappa ) \sin(n \pi x), \quad n \in \bbn.
\end{align*}
Therefore, applying Corollary \ref{cor-dist-X-Y} completes the proof.
\end{proof}

\begin{remark}
According to Remark \ref{rem-state-process} we can construct finite dimensional state processes for the SPDE (\ref{SPDE-bond-Y}) as follows. The finite index set $I \subset \bbn$ can be written as $I = \{ n_1,\ldots,n_m \}$ with positive integers $n_1 < \ldots < n_m$ for some $m \in \bbn$. We consider the $\bbr^m$-valued linear SDE
\begin{align}\label{SDE-state-HJMM}
\left\{
\begin{array}{rcl}
dZ(t) & = & (B Z(t) + b) dt + c \, dW(t) \medskip
\\ Z(0) & = & z.
\end{array}
\right.
\end{align}
Denoting by $\phi \in L(\bbr^m,K)$ the linear isomorphism given by
\begin{align*}
\phi(z) := \sum_{i=1}^m z_i u_{n_i}, \quad z \in \bbr^m,
\end{align*}
the quantities appearing in the SDE (\ref{SDE-state-HJMM}) are defined as follows:
\begin{itemize}
\item The matrix $B \in \bbr^{m \times m}$ is given by $B := \diag(\lambda_{n_1},\ldots,\lambda_{n_m})$.

\item The vector $b \in \bbr^m$ is given by $b := \phi^{-1}(\pi_K^{\cald(A)}(\alpha))$.

\item The vector $c \in \bbr^m$ is given by $c := \phi^{-1}(\sigma)$.
\end{itemize}
Let $h \in K$ be arbitrary, and set $z := \phi^{-1}(h) \in \bbr^m$. Then we have $Y = \phi(Z)$, where $Y$ denotes the strong solution to the SPDE (\ref{SPDE-bond-Y}) with $Y(0) = h$, and where $Z$ denotes the strong solution to the SDE (\ref{SDE-state-HJMM}) with $Z(0) = z$, which is given by
\begin{align*}
Z(t) = e^{Bt} \bigg[ z + e^{-Bt} c W(t) + \int_0^t e^{-Bs} ( b + B c W(s) ) ds \bigg], \quad t \in [0,T].
\end{align*}
\end{remark}

\begin{remark}
There are several examples of generators $A$ of $C_0$-semigroups which satisfy $\cald(A) \subset \cald(A^*)$; for example the Laplace operator $A = \Delta$. For such operators, similar examples of the subspace $K$ can be treated by solving the eigenvalue problems
\begin{align*}
A - \lambda = 0
\end{align*}
for real eigenvalues $\lambda \in \bbr$. Then Corollary \ref{cor-dist-X-Y} also applies, for example to the stochastic heat equation. We refer to \cite[Sec. 7]{Tappe-affin-real} for further examples of this kind.
\end{remark}

\end{document}